\titleformat{\section}{\centering\normalfont\scshape}{\thesection.}{.5em}{#1}
\titleformat{\subsection}[runin]{\normalfont\itshape}{\textnormal{\thesubsection.}}{.5em}{#1.}
\titleformat{\subsubsection}[runin]{\normalfont\itshape}{\thesubsubsection.}{.5em}{#1.}
\titlespacing{\section}{0em}{1em}{0.5em}
\titlespacing{\subsection}{0em}{.5em}{0.5em}
\newcommand{\epo}{\epsilon_\circ}
\newcommand{\ox}{{\overline x}}
\newcommand{\oy}{{\overline y}}
\newcommand{\ux}{{\underline x}}
\newcommand{\uy}{{\underline y}}
\newcommand{\vr}{{\varrho}}
\definecolor{gray}{gray}{0.5}
\newcommand{\cmt}[1]{}
\newcommand{\vertiii}[1]{{\left\vert\kern-0.25ex\left\vert\kern-0.25ex\left\vert #1 
    \right\vert\kern-0.25ex\right\vert\kern-0.25ex\right\vert}}
    \newcommand{\detail}[1]{}
	\newcommand{\drawQbg}{
		\fill (Q1) node [left] {$Q_1$} circle [radius=.02em];
		\fill (Q2) node [above left] {$Q_{2}$} circle [radius=\ptsize];
		\fill (Q3) node [right] {$Q_{3}$} circle [radius=\ptsize];
		\fill (Q4g) node [below right] {$Q_{4}$} circle [radius=\ptsize];
		\fill [color=\QbgFillColor,opacity=\QbgFillOpacity] (Q1) -- (Q2) -- (Q3) -- (Q4g) -- cycle;
		\draw [opacity=\QbgLineOpacity] (Q1) -- (Q2) -- (Q3);
		\draw [opacity=\QbgLineOpacity] (Q4g) -- (Q1);
		\if\QbgDrawCritSeg1
			\draw [style=\QbgCritSegStyle,opacity=\QbgCritSegOpacity] (Q3) -- (Q4g);
		\fi
	}
\newcommand{\hor}{\mathrm{hor}}
\newcommand{\vt}{\mathrm{vert}}
\def\lc{\lesssim}
\def\gc{\gtrsim}
\def\sR{\mathscr{R}}
\def\sV{\mathscr{V}}
\def\eps{\varepsilon}
\def\bbone{{\mathbbm 1}}
\newcommand{\sK}{\mathscr K}
\newcommand{\wt}{\widetilde}
\newcommand{\floor}[1]{\lfloor #1 \rfloor }
\newcommand{\Be}{\begin{equation}}
\newcommand{\Ee}{\end{equation}}
\newcommand{\Bm}{\begin{multline}}
\newcommand{\Em}{\end{multline}}
\def\intslash{\rlap{\kern  .32em $\mspace {.5mu}\backslash$ }\int}
\def\qsl{{\rlap{\kern  .32em $\mspace {.5mu}\backslash$ }\int_{Q_x}}}
\def\lc{\lesssim}
\def\gc{\gtrsim}
\def\floor#1{{\lfloor #1 \rfloor }}
\def\emph#1{{\it #1 }}
\def\diam{{\text{\rm  diam}}}
\def\ga{\gamma}
\def\cf{{\it cf}}
\def\meas{{\mathrm{meas}}}
\def\supp{{\mathrm{supp}}}
\def\inn#1#2{\langle#1,#2\rangle}
\def\biginn#1#2{\big\langle#1,#2\big\rangle}
\def\ga{\gamma}             
\def\eps{\varepsilon}
\def\ka{\kappa}
\def\la{\lambda}             
\def\om{\omega}              \def\Om{\Omega}
\def\vth{\vartheta}
\def\fA{{\mathfrak {A}}}
\def\fM{{\mathfrak {M}}}
\def\fS{{\mathfrak {S}}}
\def\fc{{\mathfrak {c}}}
\def\fe{{\mathfrak {e}}}
\def\fg{{\mathfrak {g}}}
\def\fr{{\mathfrak {r}}}
\def\fs{{\mathfrak {s}}}
\def\fw{{\mathfrak {w}}}
\def\fz{{\mathfrak {z}}}
\def\bbH{{\mathbb {H}}}
\def\bbN{{\mathbb {N}}}
\def\bbQ{{\mathbb {Q}}}
\def\bbR{{\mathbb {R}}}
\def\bbZ{{\mathbb {Z}}}
\def\cA{{\mathcal {A}}}
\def\cE{{\mathcal {E}}}
\def\cF{{\mathcal {F}}}
\def\cG{{\mathcal {G}}}
\def\cJ{{\mathcal {J}}}
\def\cK{{\mathcal {K}}}
\def\cL{{\mathcal {L}}}
\def\cR{{\mathcal {R}}}
\def\cT{{\mathcal {T}}}
\def\cU{{\mathcal {U}}}
\def\emph#1{{\it #1}}
\def\textbf#1{{\bf #1}}
\def\beq{\begin{equation}}
\def\endeq{\end{equation}}
\def\bs{\begin{split}}
\def\es{\end{split}}
\theoremstyle{plain}
\newtheorem{thm}{Theorem}[section]
\newtheorem{prop}[thm]{Proposition}
\newtheorem{lem}[thm]{Lemma}
\newtheorem{cor}[thm]{Corollary}
\newtheorem*{thm*}{Theorem}
\newtheorem*{conj*}{Conjecture}
\newtheorem*{openproblem*}{Open Problem}
\theoremstyle{remark}
\newtheorem{rem}[thm]{Remark}
\newtheorem*{remarka}{Remark}
\numberwithin{equation}{section}
\definecolor{jrcol}{rgb}{0,0,1.} 
\definecolor{ascol}{rgb}{1.,0,0} 
\def\R{\mathbb{R}}
\begin{document}
\title
[Spherical maximal functions on two step nilpotent groups]
{Spherical maximal functions on \\two step nilpotent Lie groups}
\author[Jaehyeon Ryu and Andreas Seeger]{Jaehyeon Ryu \ \ \ \ \ Andreas Seeger} 

\address{Jaehyeon Ryu: School of Mathematics, Korea Institute for Advanced Study, Seoul 02455, Republic of
Korea  \& Department of Mathematics, University of Wisconsin, 480 Lincoln Drive, Madison, WI, 53706, USA.}

\email{jhryu@kias.re.kr}

\address{Andreas Seeger: Department of Mathematics, University of Wisconsin, 480 Lincoln Drive, Madison, WI, 53706, USA.}
\email{seeger@math.wisc.edu}

\maketitle 

\begin{abstract}
Consider $\mathbb R^d\times \mathbb R^m$ with the group structure of a two-step nilpotent Lie group and natural parabolic dilations.  
The maximal function  originally 
introduced by Nevo and Thangavelu  in the setting of the Heisenberg group deals with   noncommutative convolutions associated to measures on  spheres or generalized spheres in $\mathbb R^d$.  We drop the nondegeneracy assumptions  in the known results on M\'etivier groups and prove the sharp $L^p$ boundedness result for all  two step nilpotent Lie groups with $d\ge 3$. 
\end{abstract}

\section{Introduction}
We consider the  problem of bounding maximal operators for averages over  spheres with higher codimension on a two-step nilpotent Lie group $G$ which was introduced for the special case of the Heisenberg group  by Nevo and Thangavelu \cite{NevoThangavelu1997}. The setup is as follows: The Lie algebra 
splits as a direct sum 
in two subspaces referred to as the horizontal and the vertical part,
$\fg=\fw_{\hor}\oplus \fw_{\vt}$,
where $\dim\fw_\hor=d$, $\dim\fw_\vt=m$,  and  $\fw_\vt\subseteq\fz(\fg)$, with $\fz(\fg)$ the center of the Lie algebra. We use the natural parabolic dilation structure on $\fw_\hor\oplus\fw_\vt$, and define for $\underline X\in \fw_\hor$, $\overline X\in \fw_\vt$, $\delta_t(\underline X, \overline X)= (t\underline X,t^2 \overline X)$.
 Using exponential coordinates  on the group we identify $G$ with $\fw_\hor\oplus\fw_\vt\equiv\bbR^d\oplus \bbR^{m}$. 
 With  $x=(\underline x,\overline x)\in \bbR^d\times \bbR^m$  the group law then  becomes
\Be\label{eq:grouplaw} (\underline x,\overline x)\cdot (\underline y, \overline y)= (\underline x+\underline y, \overline x+\overline y + \underline x^\intercal \vec J \underline y)
\Ee 
where $\underline x^\intercal \vec J \underline y:= 
\sum_{i=1}^m  \overline e_i \underline x^\intercal J_i \underline y$,  
$\{\overline e_i\}_{i=1}^m $ is the standard basis of unit vectors in $\bbR^m$
and $J_1,\dots, J_m$ are $d\times d$ skew-symmetric matrices. 
The above dilations on $\fg$ induce   automorphisms $\delta_t: (\ux,\ox)\mapsto (t\ux, t^2\ox)$ on the group.  We will study averaging operators which will be convolution operators; the noncommutative convolution for functions $f*K(x)=\int f(y)K(y^{-1}\cdot x) dy$ is then given in the form 
\begin{align} \label{eq:conv} f*K(\ux,\ox)&= \int f(\uy, \oy) K(\ux-\uy, \ox-\oy+\ux^\intercal\vec J \uy) \, dy \\ 
\notag&= \int f(\underline  x-\underline  w, \overline x-\overline w-\underline x^\intercal \vec J\underline w) \, K(\underline w, \overline w) dw.
\end{align}

Let $\Om$ be a bounded open convex domain  in $\fw_{\mathrm{hor}}\equiv \bbR^d\times\{0\}$ containing the  origin  and assume throughout the paper that the boundary  $\Sigma\equiv \partial\Om$ is smooth 
with {\it nonvanishing Gaussian curvature.} In most previous papers one takes for $\Om$ the 
unit ball in $\bbR^d\times\{0\}$.  Let 
 $\mu$ be the normalized surface measure on $\Sigma$. 
For $t>0$ the dilate $\mu_t$ is defined by $\inn{f}{\mu_t} =\int f(tx,0)d\mu$. For Schwartz functions $f$ on $\bbR^{d+m}$  the averages over dilated spheres are then given by the convolutions 
\Be\label{eq:Adef} 
A f(x,t)= f\ast \mu_t(x) = \int_{\Sigma} f\big(\ux-t\om, \ox - t  \,\ux^\intercal \vec J \om\big) d\mu(\om).
\Ee
The analogue of the Nevo--Thangavelu  maximal operator is defined (a priori  for Schwartz  functions) by
\Be\label{fMdef}
\fM f(x) := \sup_{t>0} \big|f\ast \mu_t(x)\big|.
\Ee

The objective is to establish  an $L^p(\bbR^{d+m})\to L^p(\bbR^{d+m})$ bound for $\fM$, in  an optimal range of $p$.  Taking $\Sigma=S^{d-1}$ a  partial boundedness result for $p>\frac{d-1}{d-2}$ was first obtained by Nevo and Thangavelu on the Heisenberg groups $\bbH^n$,  for $2n\equiv d\ge 4$; here $m=1$ and $J=J_1$ is an invertible symplectic   matrix.  The optimal result on $L^p$ boundedness on the Heisenberg group $\bbH^n$, for $n\ge 2$, namely that $\fM$ is bounded on $L^p(\bbR^{d+1})$ for $p>\frac{d}{d-1} $ was  obtained  by M\"uller and the second author \cite{MuellerSeeger2004} and  independently by Narayanan and Thangavelu \cite{NarayananThangavelu2004}. The  paper \cite{MuellerSeeger2004} also establishes  this result in   the more general setting of M\'etivier groups,  that is, under the  nondegeneracy condition that for all $\theta\in \bbR^m\setminus \{0\}$ the matrices $\sum_{i=1}^m \theta_iJ_i$  are invertible. Regarding the case $n=1$ it is not currently known whether $\fM$ is bounded on  $L^p(\bbH^1)$ for any $p<\infty$ (see however results restricted to Heisenberg radial functions in \cite{BeltranGuoHickmanSeeger} and \cite{LeeLee}). 

The purpose of this paper is to examine the behavior of the maximal function  on general two-step nilpotent Lie groups with $d\ge 3$, i.e. when the nondegeneracy condition on M\'etivier groups fails. A trivial special case occurs  when all the matrices $J_i$ are zero; in this  case one immediately  obtains  the same $L^p$ boundedness result for $p>\frac{d}{d-1}$,  $d\ge 3$  by applying Stein's result \cite{SteinPNAS1976} (or Bourgain's result \cite{Bourgain-sphmax} when $d=2$) in the horizontal hyper-planes  and then integrating in $\fw_{\mathrm{vert}}$.  The two extreme cases of Euclidean and M\'etivier groups suggest that $L^p$ boundedness for $p>\frac{d}{d-1} $ should hold independently of the choice  of the matrices $J_i$. 
However the  intermediate cases are  harder, and neither the slicing argument nor the arguments  in \cite{MuellerSeeger2004, NarayananThangavelu2004, KimJoonil, RoosSeegerSrivastava-imrn} for the Heisenberg and M\'etivier cases seem to apply; this was posed as a problem in \cite{MuellerSeeger2004}. In particular there seems to be  no   regularity theorem on Fourier integral operators which covers the averages in this general case.  The   special case $m=1$ was recently considered by 
Liu and Yan \cite{LiuYan} who obtain $L^p$ boundedness of $\fM$ in the partial range 
$p> \frac{d-1}{d-2}$ and $d\ge 4$.  Here we prove the optimal result in the range 
$p>\frac{d}{d-1}$,  for all two-step nilpotent Lie groups  with $d\ge 3$.

\begin{thm}\label{thm:main}
    Let $d\ge 3$,  let $G$ be a general two step  nilpotent Lie group of dimension $d+m$, with group law \eqref{eq:grouplaw}. Let $\frac{d}{d-1}<p<\infty$. Then for $f\in L^p(G)$ and almost every $x\in G$  the functions $t\to Af(x,t)$ are continuous and the maximal  operator   $\fM$ 
     extends to a bounded operator on $L^p(G)$.
\end{thm}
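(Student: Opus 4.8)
\section*{Proof proposal}

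The plan is to run a Littlewood--Paley decomposition in the spirit of \cite{MuellerSeeger2004}, but to replace the M\'etivier nondegeneracy --- which is what makes the oscillatory integral analysis in the vertical variables clean --- by a careful stratification of frequency space according to the \emph{rank profile} of the matrix pencil $\uom\mapsto \sum_i \theta_i J_i$. First I would take the partial Fourier transform in the central variable $\ox$, writing $f\ast\mu_t$ as a family of twisted convolutions on $\bbR^d$ parametrized by the dual variable $\tau\in\bbR^m$; after a rescaling $\uom\mapsto |\tau|^{1/2}\uom$ (allowed by the parabolic dilation $\delta_t$) the relevant operator becomes, for each unit vector $\theta=\tau/|\tau|$, a twisted spherical average whose ``twist'' is governed by the skew form $x^\intercal J_\theta y$ with $J_\theta=\sum_i\theta_iJ_i$. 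On the subspace where $J_\theta$ is invertible this is exactly the M\'etivier situation and the estimates of \cite{MuellerSeeger2004} apply uniformly in $\theta$; the new phenomenon is the locus where $J_\theta$ degenerates.

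The key step is therefore a microlocal decomposition adapted to $\ker J_\theta$. For each $\theta$ write $\bbR^d=V_\theta\oplus V_\theta^\perp$ with $V_\theta=\ker J_\theta$; on $V_\theta$ the convolution is \emph{abelian} (the group law trivializes there) and one is looking at an ordinary Euclidean spherical maximal function in the corresponding number of variables, while on $V_\theta^\perp$ the restricted form is nondegenerate. The idea is to cover a neighborhood of the degeneracy set by finitely many pieces on which $\rank J_\theta$ is locally constant, insert a smooth partition of unity in the frequency variable $\uxi$ dual to $\uom$ separating the components along $V_\theta$ (``small'' frequencies, where the oscillation from the twist is negligible and one integrates out Euclidean-style) from those transverse to it (``large'' frequencies, where one has genuine curvature-driven decay and can run the $TT^*$/almost-orthogonality argument of \cite{MuellerSeeger2004,NarayananThangavelu2004}). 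Because $d\ge 3$ the sphere $\Sigma$ has at least two-dimensional principal curvatures, so in \emph{either} regime one gets a fixed power of decay in the Littlewood--Paley parameter $2^j$: in the abelian regime from Stein's theorem (as already noted in the introduction for the $J_i\equiv 0$ case), and in the twisted regime from the dispersive estimates for the sub-Laplacian flow. Summing the pieces geometrically in $j$ and in the rank strata, and handling the continuity claim by the standard square-function upgrade (control of $\partial_t Af$ costing one derivative, absorbed by the $\eps$-room in $p>\frac{d}{d-1}$), yields the theorem.

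The main obstacle I anticipate is \textbf{uniformity of the decomposition as $\theta$ crosses the degeneracy locus}: the splitting $V_\theta\oplus V_\theta^\perp$ is not continuous in $\theta$ where the rank jumps, and naively patching introduces logarithmic losses that would destroy the endpoint-type range. Overcoming this requires an argument that the ``bad'' set of $\theta$ where $\rank J_\theta$ is low is a proper algebraic subvariety, together with a quantitative estimate showing that the contribution of an $\eps$-neighborhood of that variety is $O(\eps^{c})$ for some $c>0$ --- i.e.\ one trades a small-measure set of directions against the (possibly large) operator norm there. This is the step where the convexity/curvature hypothesis on $\Sigma$ and the precise value $\frac{d}{d-1}$ must interact; I would expect it to be the technical heart of the paper, presumably via an interpolation between an easy $L^2$ bound with a loss and an $L^\infty$-type bound, or via a more refined square function indexed by the strata.
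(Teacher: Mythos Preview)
Your approach is genuinely different from the paper's, and the obstacle you flag at the end is real---it is precisely what the paper \emph{avoids} rather than overcomes.

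The paper does not stratify by the rank of $J_\theta$ or decompose along $\ker J_\theta$. Instead, a preliminary reduction (Lemma~\ref{lem:aux-slicing}) applies a singular value decomposition to the coefficient matrix expressing the $J_i$ in a basis of skew-symmetric matrices, replacing $J_1,\dots,J_m$ by \emph{linearly independent} matrices $S_1,\dots,S_n$ with $n\le m$, at the cost of slicing off $m-n$ trivial central variables. Linear independence then gives the uniform two-sided bound $c_0\le \|\sum_i\theta_i S_i\|\le c_0^{-1}$ for $|\theta|\approx 1$, and this is all that is used---no control on the rank or kernel of $S^\theta$ is ever needed. The technical heart is a \emph{two-parameter} dyadic decomposition $\nu=\sum_{k,l}\nu^{k,l}$: besides the scale $2^k$ in $(\sigma,\tau)$, a second parameter $l\le k$ tracks $|\sigma|\approx 2^{k-l}$. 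After a shear and a partial Fourier transform this reduces (Proposition~\ref{prop:Tlal}) to oscillatory integral operators on $\bbR^d$ depending on a \emph{single} skew matrix $S$ with $\|S\|\approx 1$, and the key estimate comes from a Cotlar--Stein argument on boxes whose shape is governed by $l$ and the scalar $\delta_l=\max\{|Se_1|,2^{-l}\}$. The degeneracy is handled by the geometry of these boxes and the choice of differentiation directions in the $TT^*$ kernel, not by any rank dichotomy.

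Concerning your proposal itself: the assertion that on $V_\theta=\ker J_\theta$ ``one is looking at an ordinary Euclidean spherical maximal function in the corresponding number of variables'' is not correct as stated. The sphere $\Sigma$ sits in all of $\bbR^d$; making the twist trivial on $V_\theta$ does not produce a lower-dimensional spherical average, and in any case $\dim V_\theta$ may be $1$ or $2$, where no Stein-type bound is available. The small-measure trade-off you propose near the degeneracy variety would at best yield $L^2$ bounds with a loss; pushing down to $p>\tfrac{d}{d-1}$ by interpolation would require an endpoint you do not have, and the constants in the M\'etivier-type estimate blow up as $\theta$ approaches the variety, so the interpolation does not close. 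The paper's SVD reduction plus the $(k,l)$ decomposition sidesteps all of this.
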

    

By a standard argument Theorem \ref{thm:main} implies
\begin{cor} Let $f\in L^p(G)$, $p>\frac{d}{d-1}$. Then $\lim_{t\to 0} Af(x,t)= f(x)$ almost everywhere.\end{cor}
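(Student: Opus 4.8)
The plan is to deduce this a.e.\ convergence statement from the maximal inequality of Theorem~\ref{thm:main} by the classical density argument. First I would establish pointwise convergence $\lim_{t\to 0}Af(x,t)=f(x)$ on a dense subclass of $L^p(G)$, the natural choice being the Schwartz functions $g\in\mathcal S(\mathbb R^{d+m})$, which are dense in $L^p$. For such $g$ the convergence is immediate from the explicit formula \eqref{eq:Adef}: since $\mu$ is a probability measure,
\[
Ag(x,t)-g(x)=\int_\Sigma\big[g(\ux-t\om,\ox-t\,\ux^\intercal\vec J\om)-g(\ux,\ox)\big]\,d\mu(\om),
\]
and uniform continuity of $g$ forces the right-hand side to tend to $0$ as $t\to 0$, uniformly in $x$.

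Next I would introduce the oscillation operator $\cO f(x):=\limsup_{t\to 0^+}|Af(x,t)-f(x)|$ and note the trivial pointwise bound $\cO f(x)\le\fM f(x)+|f(x)|$, which by Theorem~\ref{thm:main} gives $\|\cO f\|_p\le(\|\fM\|_{L^p\to L^p}+1)\,\|f\|_p$ for all $f\in L^p(G)$. Two minor points enter here: first, the convolution $Af(\cdot,t)=f*\mu_t$ is a well-defined element of $L^p$ with $\|f*\mu_t\|_p\le\|f\|_p$ by Minkowski's inequality; second, the measurability of $\fM f$, for which the continuity assertion already included in Theorem~\ref{thm:main} (that $t\mapsto Af(x,t)$ is continuous for a.e.\ $x$) allows one to replace the supremum over $t>0$ by a countable supremum. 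I would also record that $\cO$ is subadditive, $\cO(f_1+f_2)\le\cO f_1+\cO f_2$, and that $\cO g\equiv 0$ for $g\in\mathcal S$ by the previous paragraph.

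The conclusion then follows by a standard approximation: given $f\in L^p(G)$ and $\eps>0$, choose $g\in\mathcal S(\mathbb R^{d+m})$ with $\|f-g\|_p<\eps$; subadditivity together with $\cO g\equiv 0$ yields $\cO f\le\cO(f-g)$ pointwise, hence $\|\cO f\|_p\le(\|\fM\|_{L^p\to L^p}+1)\,\|f-g\|_p<(\|\fM\|_{L^p\to L^p}+1)\,\eps$. Letting $\eps\to 0$ gives $\cO f=0$ almost everywhere, which is exactly the assertion $\lim_{t\to 0}Af(x,t)=f(x)$ for a.e.\ $x\in G$. As for obstacles, there is essentially none: all the analytic content is contained in Theorem~\ref{thm:main}, and the only additional ingredients are the density of $\mathcal S$ in $L^p$ and enough regularity of $t\mapsto Af(x,t)$ to make $\fM f$ measurable—both already available—so the deduction is routine.
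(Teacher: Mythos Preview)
Your proposal is correct and is exactly the ``standard argument'' the paper alludes to: the authors do not spell out a proof but simply state that the corollary follows from Theorem~\ref{thm:main} by a standard argument, and you have written out precisely that density-plus-maximal-inequality argument.
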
 

To show in Theorem \ref{thm:main}  the continuity in $t$,  for a.e. $x\in G$,  we shall  prove a stronger inequality involving the standard Besov space  $B^{1/p}_{p,1}(\bbR)$  which is embedded in the space of bounded continuous functions. Namely,  for $u\in C^\infty_c((\tfrac 14,4))$ and $\la>0$  the functions $s\to u(s)A_{\la s} f(x)$ are in $B^{1/p}_{p,1}$;  this implies that  $\sup_{t>0}|A_t f(x)|=\sup_{t\in \bbQ} |A_t f(x)|$ almost everywhere and establishes  $\fM f$ as a well defined  measurable function, for every $f\in L^p$. In fact one gets the following global result which implies Theorem \ref{thm:main}.
\begin{thm}\label{thm:stronger} Let $d\ge 3$, $\frac{d}{d-1}<p<\infty$. Then for $u\in C^\infty_c((\frac 14,4)) $ 
\Be\label{eq:stronger}  \Big(\int_G \big[\sup_{n\in \bbZ} \|u(\cdot)  Af(x, 2^n \cdot)\|_{B^{1/p}_{p,1}(\bbR) } \big]^p dx\Big)^{1/p} \lc \|f\|_p.\Ee
\end{thm}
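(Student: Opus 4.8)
The plan is to decompose the average $A_tf = f * \mu_t$ in frequency, separately in the horizontal and vertical variables, and to reduce \eqref{eq:stronger} to $L^p$ estimates for a family of operators indexed by these frequency scales, with exponential decay off a "critical" scale. First I would take the group Fourier transform (or simply the Euclidean Fourier transform in $\ox$, using that $\fw_\vt$ is central) to write $f * \mu_t$ as a superposition over $\tau\in\bbR^m$ of twisted convolutions on $\bbR^d$; for each fixed $\tau$ the relevant operator is a Fourier integral operator with a phase reflecting the skew forms $\tau^\intercal\vec J$. The point of Theorem~\ref{thm:main}'s novelty is that the matrix $\sum_i\tau_i J_i$ can be singular; so I would split $\bbR^m\setminus\{0\}$ according to the rank of this matrix, or more precisely according to the sizes of its nonzero singular values relative to $|\tau|$, and analyze each stratum. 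On each stratum one has, after rescaling, an oscillatory integral operator whose canonical relation degenerates in a controlled number of directions; the key is that in the remaining nondegenerate directions one still has curvature coming from the Gaussian curvature of $\Sigma$, so the local smoothing / fixed-time $L^p\to L^p_\alpha$ estimates of the M\'etivier case apply to a lower-dimensional "effective" sphere.

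The next step is to combine the pieces via a square-function / Littlewood--Paley argument adapted to the Besov norm $B^{1/p}_{p,1}$. I would decompose $u(\cdot)A_{2^n\cdot}f$ dyadically in the dual frequency of the dilation variable $s$; each piece is handled by the Sobolev embedding $\ell^1$-over-scales estimate, so \eqref{eq:stronger} follows once we have, for each horizontal frequency $2^j$ and each vertical-frequency/rank stratum, a bound on the corresponding maximal (or rather square-function) operator with a gain of the form $2^{-j\epsilon}$ in $L^p$ for $p>\frac{d}{d-1}$, summable in $j$ and in the stratum parameters. The gains come from: (i) stationary phase in the horizontal variables on $\Sigma$, giving the Stein--Bourgain-type decay $2^{-j(d-1)/p + \dots}$; (ii) the extra oscillation or rapid decay in $\tau$ coming from the size of $t|\ux|\,|\tau^\intercal\vec J\om|$, which allows summation over the vertical frequencies; (iii) on the degenerate strata, a bootstrap using the slicing/Stein argument in the directions where $\sum_i\tau_iJ_i$ is small, combined with the M\'etivier-type argument in the directions where it is large. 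One should be careful that the curvature hypothesis on $\Sigma\subset\bbR^d$ with $d\ge 3$ is what allows $p$ down to $\frac{d}{d-1}$; the endpoint-type Besov refinement requires the full strength of the fixed-time estimates rather than just the maximal bound.

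The main obstacle, as the introduction emphasizes, is precisely the intermediate-rank strata: there is no off-the-shelf FIO regularity theorem covering the phase $\ox - t\,\ux^\intercal\vec J\om$ when $\sum_i\tau_i J_i$ has variable rank, so one has to build the estimate by hand. Concretely, the hard part will be interpolating between the Euclidean behavior (rank $0$: pure slicing, Stein's theorem) and the M\'etivier behavior (full rank: \cite{MuellerSeeger2004}) uniformly in $\tau$, i.e.\ controlling the transition region where $\sum_i\tau_iJ_i$ has some singular values comparable to $|\tau|$ and others much smaller. I expect this to require a careful microlocal decomposition in the $(\ux,\om)$ frequencies relative to the kernel and cokernel of $\tau^\intercal\vec J$, followed by a rescaling that normalizes the small singular values, after which the nondegenerate part carries enough curvature to run a damped $TT^*$ or local-smoothing argument with the needed $\epsilon$-gain; the bookkeeping to make all these gains summable over the (finitely many, but parameter-dependent) strata and over the horizontal dyadic scales is where the technical weight of the paper lies. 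A secondary technical point is establishing the continuity-in-$t$ statement: here the $B^{1/p}_{p,1}$ bound must be proved with constants uniform in the dilation parameter $2^n$, which forces the frequency decomposition to be genuinely scale-invariant and the $\epsilon$-gains to be uniform.
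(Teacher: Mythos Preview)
Your outline correctly locates the central difficulty---the lack of uniform nondegeneracy of $\sum_i\tau_iJ_i$---but the rank-stratification strategy you propose is not how the paper proceeds, and as stated it has a structural gap.

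The paper does not stratify $\tau$-space by the rank or singular values of $\sum_i\tau_iJ_i$. Instead, in \S\ref{sec:prel} it performs a single global reduction: a singular value decomposition applied to the coefficient matrix of the family $\{J_i\}$ (Lemma~\ref{lem:aux-slicing}) replaces the $J_i$ by \emph{linearly independent} skew-symmetric matrices $S_1,\dots,S_n$; the dependent directions in $\bbR^m$ become genuinely Euclidean and are disposed of by slicing once and for all. After this reduction one has $c_0\le\|S^{\bar\theta}\|\le c_0^{-1}$ for every $|\bar\theta|\approx 1$, but the rank of $S^{\bar\theta}$ can still be anything from $2$ to $d$. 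The main point is that the key oscillatory-integral estimate (Proposition~\ref{prop:Tlal}) is proved uniformly under only the norm condition $c_0\le\|S\|\le c_0^{-1}$, with no reference to rank. The degeneracy is captured instead by a \emph{second} dyadic parameter $l$ (introduced in \S\ref{sec:dyadic}) measuring the size of the scalar $\sigma=\theta_1+\ux^\intercal S^{\bar\theta}e_1$, i.e.\ the frequency along the defining-function direction of $\Sigma$ after a shear (see \eqref{eq:Kklt-alt}, \eqref{eq:newsigma}). The hard regime is $|\sigma|\approx 2^{-l}$ with $l$ large, and it is resolved in \S\ref{sec:Tlal} by a two-level Cotlar--Stein argument with an anisotropic tiling of $\bbR^d$ adapted to $\delta_l=\max\{|Se_1|,2^{-l}\}$---not by any decomposition of $\tau$-space.

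The gap in your plan is that even on a fixed-rank stratum the kernel and cokernel of $\sum_i\tau_iJ_i$ rotate with $\tau$, so the proposed ``slice in the small-singular-value directions, run the M\'etivier argument in the large ones'' cannot be made uniform without a further mechanism you have not supplied; summing the resulting estimates over the strata and over the horizontal scales with genuine $\eps$-gains is exactly the problem, not a bookkeeping afterthought. The paper's insight is to avoid tracking the kernel of $S^{\bar\theta}$ altogether: after the linear-independence reduction only the scalar $\sigma$ governs the degeneracy, and the $l$-decomposition together with the bespoke almost-orthogonality argument in \S\ref{sec:Tlal} substitutes for the off-the-shelf FIO regularity theorem you (rightly) note does not exist.
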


\begin{rem}\label{rem:higher-s-derv}
    The smoothness parameter for  the Besov space in the $s$-variable can   be increased in Theorem \ref{thm:stronger}. In fact we can replace $B^{1/p}_{p,1}$  with $B^{\beta+1/p}_{p,1} $ where  $\beta<d-1+\frac dp$ if $\frac{d}{d-1}< p\le 2$ and $\beta<\frac{d-2}{p} $ if $2\le p<\infty$, see also a relevant discussion in \S\ref{sec:global}.
    Such improvements will not yield additional insights on the maximal operator. 
\end{rem}


It is also of interest to consider  a local variant for which we get a restricted weak type inequality at the endpoint 
 $p=\frac{d}{d-1}$:
\begin{thm}\label{thm:rwt} Let $d\ge 3$, $p\ge\frac{d}{d-1}$ and let $I\subset (0,\infty)$  be a compact interval. Then $A$ maps $L^{p,1}(G)$  to $L^{p,\infty}(G; L^\infty(I))$.
\end{thm}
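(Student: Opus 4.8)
The plan is to deduce Theorem~\ref{thm:rwt} from the sharper single-scale information that underlies Theorem~\ref{thm:stronger}. Since $I$ is compact, by rescaling and a finite decomposition we may assume $I\subset(\tfrac14,4)$, and it suffices to bound $\sup_{t\in I}|Af(x,t)|$ in $L^{p,\infty}$ by $\|f\|_{L^{p,1}}$ for $p=\tfrac{d}{d-1}$. By real interpolation (Bourgain's interpolation trick, in the form used for circular maximal functions), restricted weak type at the endpoint will follow once one has: (i) the $L^p\to L^p$ bound of $\sup_{t\in I}|A(\cdot,t)|$ for every $p>\tfrac{d}{d-1}$, which is Theorem~\ref{thm:main}; and (ii) an $L^2$-Sobolev smoothing estimate for the local maximal operator with a quantitative loss as the regularity index approaches the critical value. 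Concretely, one wants, for the localized single-scale piece $A_j$ (frequency $\sim 2^j$ in the variables dual to the sphere), bounds of the shape $\|\sup_{t\in I}|A_j f(\cdot,t)|\|_2\lesssim 2^{j(1/p-1/2)}\|f\|_2$ up to a factor polynomial in $j$, or equivalently an $\ell^2$-summable-in-$j$ bound after giving up an epsilon; the standard interpolation lemma then converts the $L^p$ ($p>p_{\mathrm{cr}}$) bounds together with this $L^2$ bound into the restricted weak type $(p_{\mathrm{cr}},p_{\mathrm{cr}})$ statement, with the target space $L^{p,\infty}(G;L^\infty(I))$ because the supremum over the compact interval is already controlled once one sums the dyadic pieces in $j$.

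The key steps, in order, are as follows. First, fix the Littlewood--Paley decomposition $A=\sum_{j\ge0}A_j$ adapted to the (variable-coefficient) averaging operator in \eqref{eq:Adef}; each $A_j$ is, after freezing $x$, essentially a Fourier integral operator of order $-\tfrac{d-1}{2}$ associated to the family of spheres $t\Sigma$ twisted by the $\vec J$-term. Second, record the fixed-time $L^2$ bound $\|A_jf(\cdot,t)\|_2\lesssim 2^{-j(d-1)/2}\|f\|_2$ uniformly in $t\in I$, which comes from stationary phase / the curvature hypothesis on $\Sigma$ (this is already needed for Theorem~\ref{thm:main} and may be quoted). Third, upgrade to the local maximal function by the Sobolev embedding in $t$: $\sup_{t\in I}|A_jf(x,t)|^2\lesssim \int_I|A_jf(x,t)|^2\,dt+ \big(\int_I|A_jf(x,t)|^2dt\big)^{1/2}\big(\int_I|\partial_t A_jf(x,t)|^2dt\big)^{1/2}$, and note $\partial_t$ costs one factor of $2^j$; combined with the fixed-time bound this gives $\|\sup_{t\in I}|A_jf(\cdot,t)|\|_2\lesssim 2^{-j(d-2)/2}\|f\|_2$. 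Fourth, interpolate: one has $\|\sup_{t\in I}|A_j f(\cdot,t)|\|_p\lesssim 2^{j\varepsilon}\|f\|_p$ for $p$ slightly above $p_{\mathrm{cr}}$ (from Theorem~\ref{thm:main} applied to the full operator plus a trivial bound, or directly from the proof of Theorem~\ref{thm:stronger}), and the two estimates are arranged so that at $p=p_{\mathrm{cr}}=\tfrac{d}{d-1}$ the exponent of $2^j$ vanishes; feeding this into the restricted-weak-type interpolation lemma (see e.g. the argument in Bourgain's and Schlag--Sogge's work on circular maximal functions, adapted to allow the target to be an $L^\infty(I)$-valued space) produces the claimed $L^{p_{\mathrm{cr}},1}\to L^{p_{\mathrm{cr}},\infty}$ bound.

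The main obstacle is that, unlike the Euclidean or M\'etivier cases, the operators $A_j$ here are genuinely variable-coefficient in the base point $x$ (the twist $\ux^\intercal\vec J\om$ depends on $\ux$) and, because the nondegeneracy of the $J_i$ may fail, the associated canonical relation can be degenerate, so one cannot simply invoke a clean local-smoothing or FIO regularity theorem. Thus the $L^2$ smoothing estimate in step three, while morally a fixed-time plus one-derivative argument, must be established by the same decomposition-and-orthogonality machinery developed for Theorem~\ref{thm:main}—in particular one must keep track of the contribution of the ``bad'' directions $\theta$ where $\sum\theta_iJ_i$ degenerates, and verify that on those pieces the operator behaves, after a change of variables, like a Euclidean average in a lower-dimensional slice for which the endpoint smoothing is classical (Stein / Bourgain). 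Once the dyadic pieces satisfy the two-sided estimate with the critical balance, the remaining interpolation and summation in $j$ are routine; continuity in $t$ on $I$, hence the passage from $\sup_{t\in\mathbb Q\cap I}$ to the honest supremum and the $L^\infty(I)$-valued formulation, is inherited from the Besov-space control already proved in Theorem~\ref{thm:stronger}.
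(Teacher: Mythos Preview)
Your high-level plan---dyadic decomposition, Sobolev embedding in $t$, then Bourgain's interpolation trick---is exactly what the paper does. But two concrete points in your implementation do not go through as written.

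First, in step~4 you propose to feed Bourgain's lemma the $L^2$ bound (exponent $-(d-2)/2$) together with an $L^p$ bound for $p$ \emph{slightly above} $p_{\mathrm{cr}}=\tfrac{d}{d-1}$. Bourgain's lemma produces a restricted weak type estimate at the exponent where the interpolated power of $2^j$ vanishes, and that exponent always lies \emph{strictly between} the two input exponents. Since both $2$ and $p_{\mathrm{cr}}+\delta$ lie above $p_{\mathrm{cr}}$, you cannot reach $p_{\mathrm{cr}}$ this way (letting $\delta,\varepsilon\to 0$ does not help: the implicit constant in Bourgain's lemma blows up like $1/\varepsilon$). One needs an estimate on the \emph{other} side, i.e.\ some $p_0<p_{\mathrm{cr}}$ with a positive $j$-exponent. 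The paper obtains this via the two-parameter decomposition $\nu^k=\sum_{l\le k}\nu^{k,l}$ from Proposition~\ref{prop:mainAkl}: the $L^p\to L^p(L^\infty(I))$ norm of the $(k,l)$-piece is $O(2^{k(1/p-(d-1)/p')}2^{-l(1/p-(d-2)/p'-\varepsilon)})$, and the $l$-sum converges precisely for $1\le p<\tfrac{d-1}{d-2}$, a range that contains $p_{\mathrm{cr}}$ in its interior. After summing in $l$ one has $\|\sup_{t\in I}|f*\nu^k_t|\|_p\lesssim 2^{k(d/p-(d-1))}\|f\|_p$ for all such $p$, with exponent changing sign exactly at $p_{\mathrm{cr}}$; Bourgain's trick then applies with $p_0<p_{\mathrm{cr}}<p_1$. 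Note that a single-parameter decomposition will not do: if you sum the $l$-pieces at $p=2$ you only get $\|\sup_t|f*\nu^k_t|\|_2\lesssim 2^{-k(1/2-\varepsilon)}$, not $2^{-k(d-2)/2}$, and interpolating that against $L^1$ lands at $p=3/2$, which is $p_{\mathrm{cr}}$ only when $d=3$.

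Second, your step~2 asserts the fixed-time bound $\|A_jf(\cdot,t)\|_2\lesssim 2^{-j(d-1)/2}$ ``from stationary phase / the curvature hypothesis.'' This is precisely what is \emph{not} available by standard means here: because $\sum\theta_iJ_i$ may be singular, the canonical relation degenerates and no off-the-shelf FIO $L^2$ theorem applies---you say as much in your ``main obstacle'' paragraph. The bound you need is the content of Proposition~\ref{prop:mainAkl} (proved via the almost-orthogonality arguments of \S\ref{sec:propprel}--\S\ref{sec:Tlal}), and it comes with the second parameter $l$ and an $\varepsilon$-loss; it cannot simply be quoted as a stationary-phase consequence.
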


The optimality of the $p$-range in the above maximal function theorems is shown by modifying an example of Stein \cite{SteinPNAS1976}, see also the discussion in \cite{LiuYan}.

\subsection*{\it Outline of the paper and methodology} In \S\ref{sec:prel} we reduce matters to the case where the matrices $J_1,\dots, J_m$ are linearly independent.  In \S\ref{sec:dyadic} we set up 
standard dyadic frequency decompositions of the underlying spherical measures 
and formulate the main Proposition \ref{prop:mainAkl} to be proved for the boundedness of the local maximal operator (with dilation parameters in a compact interval). The arguments to extend to the global maximal operator 
(and the slightly stronger Theorem \ref{thm:stronger}) are modifications from those in  \cite{MuellerSeeger2004}; this is taken up  in \S\ref{sec:global}. The main $L^2$ estimates are discussed in \S\ref{sec:propprel}; here we first recast Proposition \ref{prop:mainAkl} in a convenient form using Fourier integral operators, and then reduce matters to the problem of getting uniform estimates for a family of oscillatory integral operators acting on functions in $\bbR^d$. The main $L^2$ estimates for this family are stated in Proposition \ref{prop:Tlal}. The crucial and most novel  part of the paper is \S\ref{sec:Tlal} where we give the proof of this proposition via two decompositions of the operator into  more elementary building blocks which are combined via almost orthogonality arguments.

 \subsection*{Notation} For  nonnegative  quantities $a, b$  write $a \lesssim b$ 
 to indicate $a \leq C b$ for some constant $C$. We write  
 $a \approx  b$ to indicate $a \lesssim b$ and $b \lesssim a $. 

 \subsection*{\it Acknowledgements}  
 The second author  thanks Shaoming Guo for re-raising   the  question, Detlef M\"uller for  useful remarks on Carnot groups,     and Brian Street for conversations on general pointwise convergence issues.  We are indebted to the anonymous referee for a thorough and  careful reading of the manuscript and for very helpful suggestions to improve it.  
 The first author was supported in part by KIAS Individual Grant MG087001 during a research stay at UW Madison. 
The second author was  supported in part by NSF Grant DMS-2054220.

\section{Preliminary reductions} \label{sec:prel}

In the following theorem (which will be proved in subsequent sections) we formulate the main results, with a hypothesis of linear independence on the skew-symmetric matrices entering in the group structure. We then  show how the proof of Theorems \ref{thm:stronger} and \ref{thm:rwt} are  reduced to this result. 
\begin{thm}\label{thm:main-basic}
    Let $S_1,\dots, S_m$  be a  linearly independent set of $d\times d$ skew symmetric matrices.  Let  $\cU$ 
    be a neighborhood of the origin of $\bbR^{d-1} $ and let 
$g : \cU\to \bbR$ be a $C^\infty$ function satisfying  $g(0) = 1$, $g'(0) = 0$, $g''(0)$ positive-definite.
There exists   $\rho>0$ such that the following holds for $C^\infty_c$ functions $\beta_0$ supported in a ball  $\cU_{\rho}\subset \cU$ of diameter $\rho$ centered at the origin in $\bbR^{d-1}$: 
Let $\Gamma_g(\om')= g(\om')e_1+\sum_{i=2}^d \om_i e_{i}$, and define 
$\cA f(x,t)\equiv \cA_t f(x)$ by 
\Be\label{eq:Atdef}  \cA f(x,t) =  \int f\big(\ux-t\Gamma_g(\om'), \overline x- t\sum_{i=1}^m \overline e_{i}\ux^\intercal
S_i \Gamma_g(\om') \big) \beta_0(\om') d\om'\,.\Ee
Let $u\in C^\infty_c((\tfrac 14,4))$. Then for $p>\tfrac d{d-1}$ 
the inequality 
\Be \label{eq:main-st}
\Big\| \sup_n \|u(s) \cA f (\cdot, 2^n s)\|_{B^{1/p}_{p,1} (\bbR, ds)} \Big\|_{L^p(\bbR^{d+m})} \le C\|f\|_{L^p(\bbR^{d+m})}
\Ee
holds for all   functions $f\in L^p(\bbR^{d+m})$. Here $C$ depends on  $p$  but is independent of $f$, and independent of $\beta_0$ 
as
$\beta_0$ ranges over a bounded subset of $C^\infty_c (\cU_\rho)$.
Moreover for a compact interval $I\subset ( 0,\infty)$ 
\Be\label{eq:main-rwt} 
\big\|\mathrm{ess\,sup}_{t\in I} |\cA_t f|\big\|_{L^{p,\infty } (\bbR^{d+m})} \le C_I \|f\|_{L^{p,1} (\bbR^{d+m} )} , \quad p= \tfrac{d}{d-1}.
\Ee 
\end{thm}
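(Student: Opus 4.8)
The plan is to pass to a dyadic frequency decomposition of the surface measure and reduce both \eqref{eq:main-st} and \eqref{eq:main-rwt} to uniform $L^2$-estimates for the dyadic pieces, with gains that beat the $L^p$-interpolation budget. Write $\cA_t = \sum_{k\ge 0}\cA_t^k$ where $\cA_t^k$ corresponds to localizing the (Euclidean, in the $\om'$ variable) frequency of $\Gamma_g$-average to size $2^k$; because the vertical variable $\ox$ enters the phase linearly via $\ux^\intercal S_i\Gamma_g$, a partial Fourier transform in $\ox$ turns $\cA_t^k$ into a family (indexed by the dual variable $\tau\in\bbR^m$) of oscillatory integral operators in $\bbR^d$ whose phase involves $t\,\om'\cdot(\text{linear in }\ux,\tau)$ plus the curved term $t\,g(\om')(\cdots)$. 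The first real step is thus to invoke the machinery of \S\ref{sec:propprel}: recast $\cA^k_t$ as a Fourier integral operator and isolate the relevant oscillatory integral operators $T_{\la,\ell}$ of Proposition \ref{prop:Tlal}, whose $L^2\to L^2$ norm is the quantity one must control uniformly. Granting that proposition, the $k$-th piece obeys $\|\cA^k_t\|_{L^2\to L^2}\lesssim 2^{-k(d-1)/2}$ together with the requisite $t$-regularity, while the trivial $L^1$- (or $L^\infty$-) bound is $O(1)$ (or $O(2^{k})$ after accounting for the Besov derivative in $s$).

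Second, I would run the standard local maximal-operator argument: a Sobolev embedding in $t$ converts $\sup_t$ into an $L^2_t$-norm of $\cA^k_t f$ and $\partial_t\cA^k_t f$, so $\|\sup_{t\in I}|\cA^k_t f|\|_2\lesssim 2^{-k(d-1)/2}2^{k}\|f\|_2 = 2^{-k(d-3)/2}\|f\|_2$ — summable in $k$ only for $d\ge 4$, but for $d=3$ one keeps the $\eps$-loss and interpolates. Interpolating the $L^2$ bound $2^{-k(d-1-\eps)/2}$ against the elementary $L^p$-bound $O(2^{k/p'})$ valid for $p$ near $1$ (from the slicing/Euclidean spherical maximal bound in horizontal hyperplanes applied to each $\cA^k_t$), one obtains geometric decay in $k$ precisely when $p>\frac{d}{d-1}$, which sums to \eqref{eq:main-st}; the Besov norm in $s$ is handled in the same interpolation by noting that $\partial_s$ costs a factor $2^k$ and the $B^{1/p}_{p,1}$-sum over the dyadic $s$-frequencies converges for the same range (this is the $\beta=0$ case of Remark \ref{rem:higher-s-derv}). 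The $\sup_n$ over dilations is absorbed because the estimate is scale-invariant and the implicit constants do not depend on $n$; uniformity in $\beta_0$ over a bounded set in $C^\infty_c(\cU_\rho)$ is automatic since only finitely many seminorms of $\beta_0$ and $g$ enter the stationary-phase bounds.

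Third, for the endpoint restricted weak-type bound \eqref{eq:main-rwt} at $p=\frac{d}{d-1}$ I would use the usual device: test against a characteristic function $f=\mathbbm 1_E$ and, for each $k$, bound $\|\sup_{t\in I}|\cA^k_t \mathbbm 1_E|\|_{L^{p,\infty}}$ by taking the minimum of the $L^2$-estimate (which gives $2^{-k(d-1)/2}|E|^{1/2}$, good when $2^k$ is large relative to $1/|E|$) and the $L^1\to L^{1,\infty}$-type bound $O(1)\cdot|E|$ valid from the Euclidean theory (good when $2^k$ is small); optimizing the split point in $k$ and summing the resulting geometric series on each side lands exactly at the Lorentz bound $L^{p,1}\to L^{p,\infty}$ for the critical exponent, with no loss, by the standard Bourgain-type summation trick. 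The main obstacle is none of this bookkeeping but rather Proposition \ref{prop:Tlal} itself — the uniform $L^2$-decay $2^{-k(d-1)/2}$ for the oscillatory integral operators $T_{\la,\ell}$ in the \emph{absence} of the M\'etivier nondegeneracy, i.e. allowing $\sum\tau_i S_i$ to be singular or even zero on large sets of $\tau$. That is the content of \S\ref{sec:Tlal}, where the two-fold decomposition into building blocks and the almost-orthogonality argument replace the clean Fourier-integral-operator regularity theorem that is available in the nondegenerate case; I would treat it as the heart of the paper and everything above as the (routine) scaffolding around it.
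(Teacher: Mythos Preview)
There are two genuine gaps. First, the assertion that ``the $\sup_n$ over dilations is absorbed because the estimate is scale-invariant and the implicit constants do not depend on $n$'' is false: scale-invariance gives a uniform bound for each \emph{fixed} $n$, not for the supremum over all $n\in\bbZ$. The paper devotes all of \S\ref{sec:global} to precisely this step. One first replaces $\nu^{k,l}$ by a kernel $\cK^{k,l}$ with mean zero, then proves an $\ell^2$-valued $L^2$ inequality via a Cotlar--Stein variant (incurring a necessary $(1+k)^{1/2}$ loss, see \eqref{eq:CotlarSteinAB} and \eqref{eq:tobeprovenL2}), a weak-type $(1,1)$ inequality via a Calder\'on--Zygmund decomposition adapted to the Carnot balls on $G$ (see \eqref{eq:weaktype1} and \eqref{eq:L1claim}), and interpolates. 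Without this machinery you have only the local bound of Corollary \ref{cor:Bes}, not \eqref{eq:main-st}.

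Second, the single-index decomposition $\cA_t=\sum_k\cA^k_t$ together with the claimed $L^2$ bound $\|\cA^k_t\|_{L^2\to L^2}\lc 2^{-k(d-1)/2}$ and ``$\partial_s$ costs a factor $2^k$'' does not close even for the local maximal operator. The paper's decomposition is double-indexed, $\nu^k=\sum_{l\le k}\nu^{k,l}$, with $l$ recording $|\sigma|\approx 2^{k-l}$ for the frequency $\sigma$ dual to $x_1$. Proposition \ref{prop:mainAkl} gives the $L^2$ bound $2^{-k(d-1)/2}2^{l((d-2)/2+\eps)}$ for each piece; summing in $l$ yields only $\approx 2^{-k(1/2-\eps)}$ for the full $\cA^k$, not $2^{-k(d-1)/2}$ (indeed no FIO regularity theorem is known to give the stronger bound in the degenerate setting, as noted in the introduction). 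The compensating feature is that $\partial_t$ acting on the $(k,l)$-piece costs only $2^{k-l}$, not $2^k$, because the $t$-derivative of the phase is governed by the factor $\sigma$ (see the end of \S\ref{sec:shear}); this extra gain in $l$ is exactly what makes the Besov interpolation in Corollary \ref{cor:Bes} sum in both $l$ and $k$ for $d\ge 3$. Finally, the ``slicing/Euclidean spherical maximal bound in horizontal hyperplanes'' you invoke for the $L^p$ bound near $p=1$ is unavailable here: slicing applies only when all $S_i=0$, and the paper's $L^1$ bounds come instead from the direct pointwise kernel estimates \eqref{eq:pointwisebd}--\eqref{eq:L1bound}.
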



We shall now show in several steps how Theorems  \ref{thm:main}, \ref{thm:stronger} and \ref{thm:rwt} are implied by Theorem \ref{thm:main-basic}.  In our first reduction we reduce to the situation that the manifold $\Sigma\subset \bbR^m$ that supports the measure $\mu$ is given as a graph with the property required in Theorem \ref{thm:main-basic}.

We fix a point $y_\circ \in \Sigma$ and consider  the operator  
$f\mapsto f* (\chi^{y_\circ} \mu)_t$ where $\chi^{y_\circ}  $ is a $C^\infty_c$ function supported in a neighborhood of $y_\circ$. It suffices to prove the analogues of Theorem \ref{thm:main} and \ref{thm:stronger} for these convolutions; once this is 
 achieved one  can use a compactness and partition of unity argument to deduce Theorems \ref{thm:main}, \ref{thm:stronger} in their original formulation. Let $e_{\circ,1}= y_\circ/|y_\circ|$ (recall that the origin lies in the domain surrounded by  $\Sigma$ and thus $y_\circ \neq 0$). Pick unit vectors $e_{\circ,i}$, $2\le i\le d$ so that $e_{\circ,1},\dots, e_{\circ,d}$ is an orthonormal basis of $\bbR^d$. As $e_{\circ,1}$ does not belong to the tangent space to $|y_\circ|^{-1} \Sigma$ at $e_{\circ,1}$  we may parametrize $|y_\circ|^{-1}\Sigma$ near $e_{\circ,1}$ by \[\Gamma(\om')= G(\om') e_{\circ, 1} +\sum_{i=2}^d \om_i e_{\circ,i};\] here $(\om')^\intercal=(\om_2,\dots, \om_d)$ and the function $G$ satisfies 
\[\text{ $G(0)=1$, $G'(0)=b$ and 
$G''(0)$  positive definite.}\]
We then have for  $\nu=\chi^{y_\circ} \mu$ 
\[ f*\nu_t(x)=\int f(\ux-t|y_\circ|\Gamma(\om'), \ox-t|y_\circ| \ux^\intercal \vec J\Gamma(\om')) \chi_\circ(\om') 
d\om' \]
with  $\chi_\circ(\om')=\chi^{y_\circ}(|y_\circ|\Gamma(\om')) |y_\circ|^d (1+|G'(\om')|^2)^{1/2} $
and $\ux^\intercal \vec J\Gamma(\om') $ denotes the vector $\sum_{i=1}^m \overline e_{i} \ux^\intercal J_i \Gamma(\om') $.

Let $P$ be the $(d-1)\times d$ matrix defined by $P=\begin{pmatrix} 0&I_{d-1}\end{pmatrix} $, corresponding to the projection $(x_1,x_2,\dots, x_d)^\intercal \mapsto (x_2,\dots,x_d)^\intercal$.
Let $R$ denote the rotation satisfying $Re_{\circ,i}=e_i$ for $i=1,\dots, d$. Then 
\[R\Gamma(\om')= \Gamma_G(\om'):=G(\om')e_1+ \sum_{i=2}^d\om_i e_i \equiv G(\om') e_1+ P^\intercal \om'.\]
Setting
$\widetilde J_i= |y_\circ|^2 RJ_iR^\intercal $ we define $\cA^{[1]}f(x,t)\equiv \cA^{[1]}_tf (x) $ by 
\Be\label{eq:A1def} \cA_t^{[1]} f(x)= \int f(\ux-t\Gamma_G(\om'), \ox-t \sum_{i=1}^m \overline e_{i}  \ux^\intercal \widetilde J_i \Gamma_G(\om') ) \chi_\circ(\om') d\om'. \Ee
Then we compute 
\[ f*\nu_t (x)= \cA_t^{[1]}h ( |y_\circ|^{-1} R\ux, \ox) , \text{ with } h(\ux,\ox)= f(|y_\circ|  R^\intercal \ux, \ox).\]
Since $R^{-1}=R^\intercal$ it suffices to prove \eqref{eq:stronger} and the maximal bounds with   $\cA^{[1]} $ in place of $A$.

We now use another transformation to reduce to the situation in Theorem \ref{thm:main-basic}.
To this end we set $g(\om')= G(\om')-b^\intercal \om'$ so that $g'(0)=0$ as in Theorem \ref{thm:main-basic}  (recall $b= G'(0) $). 

We then have (splitting $\underline x=(x_1,x')\in \bbR\times\bbR^{d-1}$)
\begin{align*}
    \cA_t^{[1]} f (x)&= \int f(x_1-t g(\om') -t b^\intercal \om', x'-t\om', \ox- t \overline v(\ux,\om') ) \chi_\circ(\om') d\om'
\end{align*}
where $\overline v= (\overline v_1,\dots, \overline v_m)$ with
\[ \overline v_i(\ux,\om')=\ux^\intercal \widetilde J_i (g(\om')+b^\intercal \om') e_1+\ux^\intercal \widetilde J_i P^\intercal \om'.\]
Now write 
\Be\label{eq:firstentry}  x_1-tg(\om') -t b^\intercal\om'=  x_1- b^\intercal x' -t g(\om')   + b^\intercal(x'-t\om')\Ee 
and \begin{align*} \overline v_i(\ux,\om')=& (x')^\intercal P\widetilde J_i e_1 g(\om')+ (x')^\intercal P\widetilde J_ie_1 (b^\intercal \om')
+(b^\intercal x') e_1^\intercal \widetilde J_i P^\intercal \om'
\\&+(x_1-b^\intercal x') e_1^\intercal \widetilde J_iP^\intercal \om' + (x')^\intercal P\widetilde J_i P^\intercal \om'.
\end{align*}

 Observe that, with  $\Gamma_g(\om')=g(\om)e_1+P^\intercal \om' $, 
\begin{align*}
   & (x')^\intercal P \widetilde J_i e_1 (b^\intercal \om')= - \ux^\intercal  P^\intercal P \widetilde J_i^\intercal e_1 b^\intercal P \Gamma_g(\om')
   \\
   &(b^\intercal x') e_1^\intercal \widetilde J_i P^\intercal \om' = 
   \ux^\intercal P^\intercal  b e_1^\intercal  \widetilde J_i P^\intercal  P  \Gamma_g(\om'),
   \end{align*}
   also the analogous formulas remain true if on the right hand sides $\ux$ is replaced with $(x_1-b^\intercal x')e_1+P^\intercal x'$. Furthermore 
   \begin{multline*} (x')^\intercal P\widetilde J_i e_1 g(\om)
+(x_1-b^\intercal x') e_1^\intercal \widetilde J_iP^\intercal \om' + (x')^\intercal P\widetilde J_i P^\intercal \om'\\= \big[(x_1-b^\intercal x')e_1+P^\intercal x' ]^\intercal  \widetilde J_i \Gamma_g(\om'). \end{multline*}
We combine the above observations, and setting
\Be\label{eq:scriptJ} \cJ_i=  \widetilde J_i+ P^\intercal be_1^\intercal \widetilde J_iP^\intercal P - P^\intercal P \widetilde J_i^\intercal e_1 b^\intercal P\Ee 
we see that $\cJ_i$ are skew symmetric $d\times d$ matrices satisfying $\cJ_i e_1=\widetilde J_i e_1$, and  that 
\Be \label{eq:vbar} \overline v_i(\ux,\om')=  
\big[(x_1-b^\intercal x')e_1+P^\intercal x' ]^\intercal  \cJ_i \Gamma_g(\om'). 
\Ee
Now if we define $\cA^{[2]}_t$ by  
\Be\label{eq:A2def} 
\cA_t^{[2]} f(x)= 
 \int f(\ux-t\Gamma_g(\om'), \ox-t \sum_{i=1}^m \overline e_{i}  \ux^\intercal \cJ_i \Gamma_g(\om') ) \chi_\circ(\om') d\om'
 \Ee then it follows from 
\eqref{eq:firstentry} and \eqref{eq:vbar} that 
\Be\label{eq:A1-and A2} \begin{aligned}
&\cA_t^{[1]}f(x_1,x',\ox)= \cA_t^{[2]} f_b (x_1-b^\intercal x', x', \ox)
\\
&\quad\text{ with  }  f_b(y_1,y', \oy)= f(y_1+b^\intercal y',y',\oy).
\end{aligned}
\Ee
Hence the desired bounds for the families $(\cA_t^{[1]})_{t>0}$ and 
$(\cA^{[2]}_t)_{t>0} $ 
are equivalent.
For the case that the matrices $\cJ_1,\dots, \cJ_m$ are linearly independent \eqref{eq:stronger} and the $L^p$ boundedness of the maximal operator in theorem \ref{thm:main} can now be obtained from Theorem \ref{thm:main-basic} (using $S_i=\cJ_i$ in that theorem). 

In the other extreme case, when all $\cJ_i$ are the zero-matrices the $L^p$ boundedness of the maximal operator operator (and the analogue of \eqref{eq:stronger}) follows  
by an application of the spherical maximal  theorems  in the Euclidean case in $\bbR^d$ (\cite{SteinPNAS1976}) and integration in the vertical variables. 
In this case we also have the result for $d=2$ by using Bourgain's theorem \cite{Bourgain-sphmax} (although this is  not needed in our proof). If $d\ge 3$ the restricted weak type inequality for $p=\frac{d}{d-1}$ can be deduced from \cite{Bourgain-CompteR} and a slicing argument.  These slicing arguments also apply to the variants where a $B^{1/p}_{p,1}$-norm is used  on the dilation parameter.

It remains to consider the case where the matrices $\cJ_i$ are not all zero but are linearly dependent. 
For this  case we need a further reduction.

\begin{lem} \label{lem:aux-slicing}
   Assume that $\cJ_1,\dots, \cJ_m$  are not all zero.  
   Then there exist   linearly independent skew symmetric $d\times d$ matrices $S_1,\dots, S_n$,  with $1\le n\le m$, and an orthogonal matrix $V\in O(m)$ such that for $\cA_t^{[2]}$ as in \eqref{eq:A2def} 
   \Be\label{eq:change-of-var}
    \cA_t^{[2]} f(x)= \cA_t^{[3]}  f_V(\underline x, V\overline x),  \Ee
    with  $f_V(y) =f(\underline y, V^\intercal \overline y)$ and 
\Be\label{eq:A3def} 
\cA_t^{[3]} f(x)= 
 \int f(\ux-t\Gamma_g(\om'), \ox-t \sum_{i=1}^n \overline e_{i}  \ux^\intercal S_i \Gamma_g(\om') ) \chi_\circ(\om') d\om'\,.
 \Ee 
   \end{lem}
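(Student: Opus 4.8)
The plan is to diagonalize the dependency structure of the $m$-tuple $(\cJ_1,\dots,\cJ_m)$ by a rotation in the vertical variable $\ox$. Regard the map $\theta\mapsto \sum_{i=1}^m \theta_i\cJ_i$ as a linear map from $\bbR^m$ into the space of skew-symmetric $d\times d$ matrices; let $n$ be its rank, which is $\geq 1$ precisely because not all $\cJ_i$ vanish. Choose an orthonormal basis $v_1,\dots,v_m$ of $\bbR^m$ such that $v_{n+1},\dots,v_m$span the kernel of this map, and let $V\in O(m)$ be the matrix with rows $v_1^\intercal,\dots,v_m^\intercal$ (so $V\overline e_i = \sum_j (V)_{ij}\overline e_j$ pairs correctly). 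Set $S_i = \sum_{j=1}^m (V)_{ij}\cJ_j$ for $i=1,\dots,n$; then $S_1,\dots,S_n$ are skew-symmetric, linearly independent by construction, and $\sum_{j=1}^m (V)_{ij}\cJ_j = 0$ for $i>n$.

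Next I would carry out the change of variables in the group. The $i$-th vertical component of $\cA^{[2]}_t f$ is shifted by $t\,\ux^\intercal \cJ_i\Gamma_g(\om')$, i.e. the vertical shift is the vector $\sum_{i=1}^m \overline e_i\, \ux^\intercal\cJ_i\Gamma_g(\om')\in\bbR^m$. Applying $V$ to the vertical variable, the shift becomes $\sum_{i=1}^m \overline e_i\, \ux^\intercal\big(\sum_j (V)_{ij}\cJ_j\big)\Gamma_g(\om') = \sum_{i=1}^n \overline e_i\,\ux^\intercal S_i\Gamma_g(\om')$, the terms with $i>n$ dropping out by the choice of $V$. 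Concretely, writing $f_V(y)=f(\uy,V^\intercal\oy)$ and substituting, one gets $\cA^{[2]}_t f(\ux,\ox) = \cA^{[3]}_t f_V(\ux, V\ox)$ with $\cA^{[3]}_t$ as in \eqref{eq:A3def}; this is just the observation that $V$ (being orthogonal, in particular linear and with $\det V=\pm1$) commutes with the integration and the horizontal shift, and conjugates the tuple $(\cJ_i)$ to $(S_i)$ padded by zeros. One should also note that since $f\mapsto f_V$ and $(\ux,\ox)\mapsto(\ux,V\ox)$ are measure-preserving linear bijections of $\bbR^{d+m}$, this identity transfers the $L^p$ mapping properties and the $B^{1/p}_{p,1}$-norm statements between $\cA^{[2]}$ and $\cA^{[3]}$ without loss, which is the point of the lemma.

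I do not expect any genuine obstacle here — the lemma is a bookkeeping reduction. The only thing to be slightly careful about is getting the index conventions for $V$ consistent (whether $V$ acts on $\ox$ or on the coefficient tuple, and hence whether $S_i$ is built from $V$ or $V^\intercal$); since $V\in O(m)$ this only affects which transpose appears and is harmless. It is also worth remarking in passing, though not needed for the proof, that the matrices $S_1,\dots,S_n$ need not satisfy $S_i e_1 = \cJ_i e_1$ and need not be a relabeling of the $\cJ_i$; all that matters for the subsequent application of Theorem \ref{thm:main-basic} is their linear independence, which holds by the rank argument above.
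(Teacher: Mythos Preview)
Your proof is correct and the underlying idea is the same as the paper's: rotate the vertical variable by an orthogonal matrix $V\in O(m)$ so that the tuple $(\cJ_1,\dots,\cJ_m)$ is transformed into $(S_1,\dots,S_n,0,\dots,0)$ with $S_1,\dots,S_n$ linearly independent, then verify the identity \eqref{eq:change-of-var} by a direct computation of how $V$ acts on the vertical shift.

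The implementations differ slightly. The paper fixes a basis $E_1,\dots,E_{d(d-1)/2}$ of the skew-symmetric $d\times d$ matrices, writes $\cJ_i=\sum_j c_{ij}E_j$, and applies the singular value decomposition to the coefficient matrix $C^\intercal=UDV$; the matrices $S_k$ are then built from the columns of $U$ scaled by the singular values. Your argument bypasses the SVD entirely: you pick an orthonormal basis of $\bbR^m$ whose last $m-n$ vectors span the kernel of $\theta\mapsto\sum_i\theta_i\cJ_i$, and set $S_i$ equal to the image of the $i$-th basis vector. This is more elementary and arguably cleaner, since only the orthogonal matrix $V$ on $\bbR^m$ matters for the lemma and the auxiliary orthogonal matrix $U$ on the space of skew-symmetric matrices plays no role downstream. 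Both constructions give the needed linear independence and the same change-of-variables formula; neither offers a material advantage beyond taste.
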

\begin{proof}
Consider a basis $E_1,\dots E_{\frac{d(d-1)}2} $ in the space of $d\times d$ skew symmetric matrices.
    We can express the $\cJ_i$ in terms of the basis matrices, and obtain $\cJ_i=\sum_{j=1}^{\frac{d(d-1)}{2}} c_{ij} E_j$, $i=1,\dots, m$ for suitable scalars $c_{ij}$. We denote by   $C$  the $m\times \frac{d(d-1)}{2}$ matrix whose $(i,j)$ entry is given by $c_{ij}$. We apply the singular value decomposition of the transposed
    matrix $C^\intercal$. That is,  we decompose  
    $ C^\intercal = UDV$ where $U$ is an orthogonal $\frac{d(d-1)}{2}\times \frac{d(d-1)}{2}$ matrix, $V$ is an orthogonal $m\times m$ matrix and $D$ is a $\frac{d(d-1)}{2}\times m$ matrix  such that 
    \[D_{ij}=\begin{cases} 
    s_i&\text{ if $1\le i=j\le n$}\\ 0 &\text{ otherwise}.\end{cases} \] Here $n\le \min\{\frac{d(d-1)}{2} ,m\} $ and $s_1\ge \dots\ge s_n>0$ are the singular values. For the coefficients of $C$ we  then get 
    \[ c_{ij} = (C^\intercal)_{ji} = 
    \sum_{k=1}^{\frac{d(d-1)}{2} }\sum_{\ell=1}^m U_{jk} D_{k\ell} V_{\ell i} = \sum_{k=1}^{ n}  U_{jk} s_k V_{ki}.  \]
Defining \[S_k= s_k \sum_{j=1}^{\frac{d(d-1)}2 } U_{jk} E_j , \quad k=1,\dots n, \] it is  clear by the invertibility of $U$ that $S_1,\dots, S_n$ are linearly independent skew symmetric matrices and we obtain
\[ \cJ_i=\sum_{j=1}^{\frac{d(d-1)}{2} } c_{ij} E_j = \sum_{j=1}^{\frac{d(d-1)}{2} }\sum_{k=1}^n U_{jk}s_k V_{ki} E_j =
\sum_{k=1}^n V_{ki} S_k \,, \quad i=1,\dots,m.
\]
Hence   (using $V^\intercal =V^{-1}$) 
\begin{align*} &\overline x-t \sum_{i=1}^m \overline e_i \underline x^\intercal \cJ_i \Gamma_g(\om')
=V^\intercal \big[ V\overline x -t \sum_{k=1}^n \overline e_k \underline x^\intercal S_k \Gamma_g(\om') \big] 
\end{align*}
which gives \eqref{eq:change-of-var}.
\end{proof}

By Lemma \ref{lem:aux-slicing} the desired bounds for
$\cA_t^{[2]} $ and $\cA_t^{[3]}$ are equivalent. We show   how Theorem \ref{thm:main-basic} yields the analogue of Theorem \ref{thm:stronger} for the family $(\cA_t^{[3]})_{t>0}$ in place of $(A_t)$. 
In $\bbR^m$ we split variables  $\overline x= (\tilde x, \breve x) \in \bbR^n\times \bbR^{m-n}$. 
For $h\in L^p(\bbR^{d+n})$ we define $\fA h(x,t)=\fA_t h(x)$ by 
\[\fA_t h(\ux,\tilde x)= 
 \int h(\ux-t\Gamma_g(\om'), \tilde x-t \sum_{i=1}^n \overline e_{i}  \ux^\intercal S_i \Gamma_g(\om') ) \chi_\circ(\om') d\om'.\]
 We get from Theorem \ref{thm:main-basic} applied with $n$ in place of $m$, that  \[\Big\|\sup_{\ell\in \bbZ} \| u(s) \fA_{2^\ell s}h\|_{B^{1/p}_{p,1}(\bbR, ds)} \Big\|_{L^p(\bbR^{d+n})}  \lc \|h\|_{L^p(\bbR^{d+n})}, \quad p>\frac{d}{d-1} .\]

 Let
 $f^{\breve x} (\ux, \tilde x)= f(\ux, \tilde x, \breve x) $, and observe that
 $\cA_t^{[3]} f(\ux, \tilde x, \breve x)= \fA_t f^{\breve x} (\ux, \tilde x)$.
 We apply the $L^p(\bbR^{d+n})$-boundedness result stated above to the functions
 for 
 $h=f^{\breve  x}$ and get
 \[\iint \sup_{\ell \in \bbZ} \|u(s) \cA_{2^\ell s } ^{[3]} f (\ux,\tilde x, \breve x) \|_{B^{1/p}_{p,1} (\bbR, ds)} ^p d\ux d\tilde x
 \le C^p \iint| f(\ux, \tilde x, \breve x)|^p  d\ux d\tilde x,\]
 with $C$ independent of $\breve  x$. Integrating over  
 $\breve  x\in \bbR^{m-n}$ gives the desired result.

 We have now reduced the proof of Theorem \ref{thm:main} to the inequalities \eqref{eq:main-st} in Theorem \ref{thm:main-basic}. The above arguments also reduce (after minor modifications) the proof of Theorem \ref{thm:rwt} to the proof of inequality \eqref{eq:main-rwt}. 
 For the remainder of the paper we will be concerned with the proof of  
 Theorem \ref{thm:main-basic}.

 \begin{remarka} 
 The shear transformation showing the equivalence of the $L^p$ boundedness of the operators  associated with $\cA^{[1]}$ and $\cA^{[2]}$ is not needed for the spherical case 
 $\Sigma=S^{d-1}$, when $b=0$. 
 However in the general case it seems necessary, and  we 
 take this opportunity to correct an inaccuracy in \cite{MuellerSeeger2004}  which deals with the case of M\'etivier groups (i.e. the matrices $\sum_{i=1}^m c_i J_i$ are invertible if  $(c_1,\dots, c_m)\neq 0$). There it is stated that this reduction follows for more general $\Sigma$ by a rotation argument which is not the case. One can use the above shear transformations instead and deduce that the arguments in \cite{MuellerSeeger2004} apply to surfaces $\Sigma$ that are small perturbations of the sphere. Such a   perturbation assumption would be needed  for the proof in \cite{MuellerSeeger2004} since 
 the M\'etivier condition on the matrices $J_i$ (and, equivalently,  on the   $\widetilde J_i=|y_\circ|^2RJ_iR^\intercal$)   guarantees  the M\'etivier condition on the matrices $\cJ_i$ in \eqref{eq:scriptJ} only when $b$ is sufficiently small.  In the setup of this paper there is no such small smallness assumption on $b$ needed.
 \end{remarka}

 For the remainder of the paper we will give the proof of Theorem \ref{thm:main-basic} and fix linearly  independent  skew symmetric matrices $S_1,\dots, S_m$.
  For later use 
  notice that this assumption implies that there is a  $c_0>0$  such that 
    \Be\label{eq:lincombofSi} c_0\le \Big\| \sum_{i=1}^m {\theta_i}S_i \Big\| \le c_0^{-1}  \text{ for all }\theta\in \bbR^{m} \text{ with } 1/4\le|\theta|\le 4.\Ee
 This is immediate from the  fact that $\theta\to \|\sum_{i=1}^m\theta_i S_i\|  $ is a 
 continuous function which takes a minimum and a maximum on the annulus $\{\theta: 1/4\le|\theta|\le 4\}$, and by the assumed linear independence this minimum is positive.

\section{Dyadic frequency decompositions}\label{sec:dyadic}
We now use the group structure on $\bbR^{d+m}$ given by \eqref{eq:grouplaw} but with the  $J_i$ replaced by the skew symmetric matrices $S_i$, {with  $S_1,\dots, S_m$ linearly independent.}
We denote by $\nu$ the measure defined by $\inn{\nu}{f}=\int f(g(\om') ,\om',0)d\om'$
which can also be written as the pairing of the distribution  \[\beta_0(x') \beta_1(x_1,\ox)  \delta(x_1-g(x'), \ox)\] with $f$; here $\delta$ is the Dirac measure  in $\bbR^{m+1}$, $\beta_0$ is a $C^\infty$ function supported on a ball  of radius $\varrho$ centered at the origin of $\bbR^{d-1}$ and $\beta_1$ is a $C^\infty$ function supported on an $\vr^2$-ball centered at $(1,0,\dots,0)\in \bbR^{m+1}$, here $\epo\ll \varrho$. We assume that $\vr$ is small compared with the  reciprocal of the $C^{3}$ norm of $g$, also $\vr \ll \|(g''(0))^{-1} \|^{-1} $ and finally $\vr \ll c_0$ where $c_0$ is as in \eqref{eq:lincombofSi}.

We use a dyadic frequency decomposition of the Fourier integral of $\delta$ to decompose $\nu=\sum_{k=0}^\infty \nu^k $ where 
\Be\label{eq:oscillatorynuk} \nu^k (x)=  \frac{\beta_1(x_1,\ox) \beta_0(x') }{(2\pi)^{m+1}}\iint \zeta_k(\sqrt{\sigma^2+|\tau|^2}) e^{i \sigma (x_1-g(x'))+i \inn{\tau} {\overline x}} d\sigma d\tau 
\Ee
where $\zeta_0\in C^\infty_c(\bbR)$ is supported on  $(-1,1)$, $\zeta_0(s)=1$ for  $|s|<3/4$ and $\zeta_k(s)= \zeta_0( 2^{-k}s) -\zeta_0(2^{1-k} s)$ when $k\ge 1$; hence,  for $k\ge 1$ the function $\zeta_k= \zeta_1(2^{-(k-1)} \cdot)$ is supported in 
$(-2^{k-1}, -2^{k-3})\cup (2^{k-3}, 2^{k-1})$. For $k>0$ we make a further decomposition in the $\sigma$-variables setting 
\Be\label{eq:nukl}\nu^{k,l}  (x)=  \frac{\beta_1(x_1,\ox) \beta_0(x') }{(2\pi)^{m+1}}\iint \zeta_{k,l}(\sigma,\tau)e^{i \sigma(x_1-g(x'))+i\inn{ \tau} {\overline x}} d\sigma d\tau 
\Ee where \[ \zeta_{k,l} (\sigma, \tau)=\begin{cases} \zeta_1(2^{1-k} \sqrt{\sigma^2+|\tau|^2})\zeta_1(2^{l+1-k} \sigma) &\text{ for } l<k
\\ \zeta_1(2^{1-k} \sqrt{\sigma^2+|\tau|^2})
\zeta_0 (\sigma) &\text{ for } l=k
\end{cases} 
\] i.e., 
for $k\ge 1$, $l<k$ we have the restriction $|\sigma|+|\tau|\approx 2^k$  and $|\sigma|\approx 2^{k-l}$ in the frequency variables. 
We set $\nu^{k, l}_t(x)= t^{-d-2m} \nu^{k,l} (t^{-1}\ux, t^{-2}\ox)$ and similarly define $\nu^k_t$.

We 
state the main local estimates for $f*\nu^{k,l}_t$.  
\begin{prop} \label{prop:mainAkl}  Let   $\eps>0$. Let $I$ be a compact subinterval of $(0,\infty)$. 
  Then there exists a constant $C = C(\eps,I)>0$ such that the following holds.
 \begin{multline}\label{est:akldakl}
\Big(\int_I\|f*\nu^{k,l}_t\|_{L^p(\bbR^{d+m})}^pdt\Big)^{1/p} +2^{l-k} \Big(\int_I\|\partial_t (f*\nu^{k,l}_t)\|_{L^p(\bbR^{d+m})} ^pdt\Big)^{1/p}
\\ \le \begin{cases} C 2^{-\frac{k(d-1)}{p'}} 2^{l(\frac{d-2}{p'}+\eps)} \|f\|_{L^p(\bbR^{d+m})} \text{ if $1\le p\le 2$,}\\
C 2^{-\frac{k(d-1)}{p}} 2^{l(\frac{d-2}{p}+\eps)} \|f\|_{L^p(\bbR^{d+m})} \text{ if $2\le p<\infty$.}
\end{cases} 
\end{multline}
\end{prop}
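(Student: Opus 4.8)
The plan is to prove the two estimates by interpolation between an $L^2$ bound and $L^1 \to L^1$, $L^\infty \to L^\infty$ bounds, after first recasting $f \mapsto f * \nu^{k,l}_t$ as a Fourier integral operator (this is the reduction announced for \S\ref{sec:propprel}). Taking the group Fourier transform in the central variable $\overline x$ (equivalently, freezing the dual variable $\tau$ appearing in \eqref{eq:nukl}), the convolution becomes, for each fixed $\tau$, an oscillatory integral operator on $\bbR^d$ whose phase is built from the quantity $\underline x^\intercal \big(\sum_i \tau_i S_i\big) \underline y$ together with the defining function $x_1 - g(x')$ of $\Sigma$. On the support of $\zeta_{k,l}$ we have $|\tau| \approx 2^k$ (unless $l = k$, where $|\sigma| \lesssim 1$ and $|\tau| \approx 2^k$ still), and the nondegeneracy \eqref{eq:lincombofSi} guarantees that $\sum_i \tau_i S_i$ has operator norm $\approx 2^k$; the parameter $2^{-l}$ measures the degeneracy of the $\sigma$-frequency (the "Euclidean" cone direction) relative to the full frequency $2^k$. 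First I would carry out this reduction carefully and record the resulting family of operators $T_{\lambda, \ell}$ (as in the forthcoming Proposition \ref{prop:Tlal}), with $\lambda \approx 2^k$ and the secondary scale governed by $2^{-l}$, together with the elementary observation that $\partial_t$ applied to $f * \nu^{k,l}_t$ costs a factor $2^{k-l}$ in frequency (the dilation acts by $t^{-1}$ on $\underline x$ and $t^{-2}$ on $\overline x$, and on the support of $\zeta_{k,l}$ the relevant frequency for the $t$-derivative is $\approx 2^{k-l}$), which explains the weight $2^{l-k}$ multiplying the derivative term.

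The core of the argument is the $L^2$ estimate: I would show
\[
\Big(\int_I \|f * \nu^{k,l}_t\|_{L^2}^2\, dt\Big)^{1/2} \lesssim 2^{-k(d-1)/2}\, 2^{l(d-2)/2}\, \|f\|_{L^2},
\]
with the same bound (up to the $2^{l-k}$ normalization) for the $t$-derivative. This is exactly the decay one expects: $2^{-k(d-1)/2}$ is the $L^2$ smoothing of a codimension-one spherical average at frequency $2^k$, while the loss $2^{l(d-2)/2}$ reflects that in the most degenerate regime $l = k$ the operator essentially decouples into a Euclidean spherical average in the horizontal slice — for which no gain from the central variable is available — and one recovers only the $d-2$ effective dimensions transverse to the degenerate direction. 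The proof of this should rest on the stationary-phase / almost-orthogonality analysis of the oscillatory integral operators $T_{\lambda,\ell}$ announced as Proposition \ref{prop:Tlal} in \S\ref{sec:Tlal}; here one invokes the curvature hypothesis $g''(0)$ positive-definite (so the phase has a nondegenerate Hessian in the $d-1$ non-degenerate directions) and the bound \eqref{eq:lincombofSi} on $\sum_i \tau_i S_i$.

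For the endpoints, the $L^1 \to L^1$ and $L^\infty \to L^\infty$ bounds come from $L^1$ estimates on the kernels: a routine integration-by-parts in the $(\sigma,\tau)$-integral in \eqref{eq:nukl} shows that $\nu^{k,l}_t$ and $2^{l-k}\partial_t \nu^{k,l}_t$ are finite measures with total variation $\lesssim 1$ uniformly in $k, l, t$ (one uses that the $\sigma$-support has length $\approx 2^{k-l}$ and the $\tau$-support has measure $\approx 2^{km}$, balanced against the rapid decay of the oscillatory factor away from the sphere at the relevant anisotropic scales), and then one integrates the trivial pointwise bound over the compact interval $I$. Real interpolation between the $L^2$ estimate at exponent $2$ and the $L^1$ or $L^\infty$ estimates, done for the vector-valued operator $f \mapsto (t \mapsto f * \nu^{k,l}_t)$ mapping into $L^p(I; L^p)$, then yields the stated bound for $1 \le p \le 2$ with exponents $-k(d-1)/p'$ and $l(d-2)/p'$, and for $2 \le p \le \infty$ with exponents $-k(d-1)/p$ and $l(d-2)/p$; the $+\eps$ in the exponent of $2^l$ absorbs logarithmic losses at the interpolation endpoints (and at $l = k$, where several borderline cases coincide). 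I expect the main obstacle to be the $L^2$ estimate, specifically establishing Proposition \ref{prop:Tlal} uniformly over the degeneracy parameter: when $\sum_i \tau_i S_i$ is highly non-invertible the usual Heisenberg/Métivier arguments (which rely on invertibility) break down, and one must extract the sharp $2^{l(d-2)/2}$ loss by a finer decomposition of the operator adapted to the kernel and cokernel of $\sum_i \tau_i S_i$, combined with almost-orthogonality across the pieces — this is precisely the novel part of the paper deferred to \S\ref{sec:Tlal}.
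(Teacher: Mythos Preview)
Your proposal is correct and follows essentially the same route as the paper: reduce the $L^2$ bound to the oscillatory integral estimate of Proposition~\ref{prop:Tlal} via the shear transformation and factorization in \S\ref{sec:propprel}, obtain the $L^1$ and $L^\infty$ bounds from the kernel estimate \eqref{eq:L1bound}, and interpolate. One small correction: the $+\eps$ in the $2^l$ exponent is already present in the $L^2$ estimate of Proposition~\ref{prop:Tlal} (it arises from the almost-orthogonality decomposition in \S\ref{sec:Tlal}, where the boxes carry a $2^{l\eps/d}$ padding), not from the interpolation step.
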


\begin{cor}\label{cor:Bes}
 Let  $\frac{d}{d-1}<p<\infty$, $u\in C^\infty_c((0,\infty))$. Let $f\in L^p(\bbR^{d+m})$. Then for almost every $x\in \bbR^{d+m}$ the function $t\mapsto \cA f(x,t) $ is continuous, and for any $\la>0$
\Be\label{eq:Besov}
\Big(\int_{\bbR^{d+m}} \big\| u(\cdot) Af(x,\la \cdot) \big\|_{B^{1/p}_{p,1}}^p dx\Big)^{1/p} \lc \|f\|_{L^p(\bbR^{d+m})}.
\Ee
\end{cor}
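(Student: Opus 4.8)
The plan is to prove the quantitative bound \eqref{eq:Besov} by summing the dyadic estimates of Proposition~\ref{prop:mainAkl}, and then to deduce the a.e.\ continuity of $t\mapsto\cA f(x,t)$ from the embedding $B^{1/p}_{p,1}(\bbR)\hookrightarrow C_b(\bbR)$ together with the finiteness of the left side of \eqref{eq:Besov}. By \S\ref{sec:dyadic} one has $\cA_t f=f*\nu_t=f*\nu^0_t+\sum_{k\ge 1}\sum_{l=0}^{k}f*\nu^{k,l}_t$, where $\nu^0$ is a fixed Schwartz function whose contribution is bounded by $\|f\|_p$ directly; so I concentrate on the terms with $k\ge 1$. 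Using the triangle inequality in $B^{1/p}_{p,1}$ and Minkowski's inequality in $L^p(dx)$, it suffices to show
\[ \sum_{k\ge 1}\sum_{l=0}^{k}\Big\|\big\|u(\cdot)\,f*\nu^{k,l}_{\la\,\cdot}(x)\big\|_{B^{1/p}_{p,1}(\bbR,\,ds)}\Big\|_{L^p_x(\bbR^{d+m})}\lc\|f\|_p ,\]
and in fact I will obtain a geometric gain in $k+l$, which at the same time shows that the series defining $\cA f(x,\cdot)$ converges in $B^{1/p}_{p,1}$ for a.e.\ $x$.

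The one–dimensional ingredient is the elementary Bernstein–type inequality: for $p>1$, any $R\ge 1$, and any $G$ with $G,G'\in L^p(\bbR)$,
\[ \|G\|_{B^{1/p}_{p,1}(\bbR)}\lc R^{1/p}\big(\|G\|_{L^p(\bbR)}+R^{-1}\|G'\|_{L^p(\bbR)}\big),\]
obtained by splitting $\sum_{j\ge 0}2^{j/p}\|\Delta_jG\|_p$ at $2^j\approx R$ and using the standard bounds $\|\Delta_jG\|_p\lc\|G\|_p$ for $2^j\lc R$ and $\|\Delta_jG\|_p\lc 2^{-j}\|G'\|_p$ for $2^j\gc R$ (the hypothesis $p>1$ makes the high–frequency tail $\sum_{2^j\gc R}2^{j/p-j}$ converge). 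The key choice is to apply this with $R=2^{k-l}$: one should view $s\mapsto f*\nu^{k,l}_{\la s}(x)$ as essentially frequency–localized at scale $2^{k-l}$ in $s$, which is precisely what the weight $2^{l-k}$ in front of the $t$–derivative in \eqref{est:akldakl} reflects. Taking $G=G_{k,l}(s):=u(s)\,f*\nu^{k,l}_{\la s}(x)$, and noting that $\partial_s G_{k,l}$ is the sum of a term of the same shape (with $u'$ in place of $u$) and the term $\la\,u(s)\,\partial_t(f*\nu^{k,l}_t)\big|_{t=\la s}(x)$, I then raise to the $p$–th power, integrate in $x$, exchange the $ds$ and $dx$ integrals by Fubini, and substitute $t=\la s$ to arrive at
\[ \Big\|\big\|G_{k,l}\big\|_{B^{1/p}_{p,1}(\bbR,ds)}\Big\|_{L^p_x}\lc_{u,\la}2^{(k-l)/p}\,\Big(\textstyle\big(\int_{I_\la}\|f*\nu^{k,l}_t\|_p^p\,dt\big)^{1/p}+2^{l-k}\big(\int_{I_\la}\|\partial_t(f*\nu^{k,l}_t)\|_p^p\,dt\big)^{1/p}\Big),\]
with $I_\la=\la\cdot\supp(u)$ a compact subinterval of $(0,\infty)$; by Proposition~\ref{prop:mainAkl} the quantity in parentheses is $\lc 2^{-k(d-1)/p'}2^{l((d-2)/p'+\eps)}\|f\|_p$ for $p\le 2$ and $\lc 2^{-k(d-1)/p}2^{l((d-2)/p+\eps)}\|f\|_p$ for $p\ge 2$.

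It then remains to sum over $0\le l\le k$. For $\tfrac d{d-1}<p\le 2$ the exponent of $2$ is $k\big(\tfrac1p-\tfrac{d-1}{p'}\big)+l\big(\tfrac{d-2}{p'}-\tfrac1p+\eps\big)$: if the coefficient of $l$ is $\ge 0$ the worst case $l=k$ gives total exponent $-\tfrac1{p'}+\eps<0$ for small $\eps$, and if it is $<0$ the worst case $l=0$ gives $\tfrac1p-\tfrac{d-1}{p'}$, which is negative exactly because $p>\tfrac d{d-1}$; either way the double series converges. For $2\le p<\infty$ the exponent is $k\tfrac{2-d}{p}+l\big(\tfrac{d-3}{p}+\eps\big)$ with $2-d<0$ and $\tfrac{d-3}{p}\ge 0$ since $d\ge 3$, and the worst case $l=k$ gives $-\tfrac1p+\eps<0$; so again the series converges. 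This proves \eqref{eq:Besov}, and the same computation with a bit of room to spare shows $\sum_{k,l}\|u(\cdot)f*\nu^{k,l}_{\la\,\cdot}(x)\|_{B^{1/p}_{p,1}}<\infty$ for a.e.\ $x$; hence for a.e.\ $x$ the function $s\mapsto u(s)\cA f(x,\la s)$ belongs to $B^{1/p}_{p,1}(\bbR)$ and, by the embedding into $C_b(\bbR)$, has a continuous representative. Letting $u$ run through a sequence of cutoffs exhausting $(0,\infty)$ (with $\la=1$) yields the continuity of $t\mapsto\cA f(x,t)$ on $(0,\infty)$ for a.e.\ $x$, and also makes $\cA f$ well defined on all of $L^p$. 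The main obstacle — in fact essentially the only nonroutine point — is getting the scale right in the Bernstein step: summing over $l$ first and then using the scale $2^k$ discards a factor that cannot be afforded when $p$ is close to $\tfrac d{d-1}$.
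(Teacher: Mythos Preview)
Your proof is correct and takes essentially the same approach as the paper. The only cosmetic difference is that the paper phrases the key one-dimensional step as the multiplicative interpolation inequality $\|G\|_{B^{1/p}_{p,1}}\lc\|G\|_p^{1-1/p}\|G'\|_p^{1/p}$ (which is what your additive Bernstein inequality becomes after optimizing in $R$), whereas you keep the scale $R=2^{k-l}$ explicit; the resulting exponents in $k$ and $l$, and the summation argument, are identical.
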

\begin{proof}  By scaling we can assume that $\la=1$.  The first  statement follows from the second, since $B^{1/p}_{p,1}$ embeds into the space of bounded continuous functions. 
 Let $1\le p\le 2$. Set 
 $\cR^{k,l} f(x,s)=u(s) f*\nu^{k,l}_s(x) $.  We use the interpolation inequality $\|g\|_{B^\theta_{p,1}} \lc \|g\|_p^{1-\theta}\|g'\|_p^\theta$ ($0<\theta<1$),  H\"older's inequality, Fubini and the proposition  
to deduce that the left hand side of $\eqref{eq:Besov}$ is dominated by 
\begin{align*}
 \|\cR^{k,l} f\|_{L^p(B^{1/p}_{p,1})} &\lc  \| \cR^{k,l} f\|_{L^p(L^p)}^{1-\theta}
     \|\partial_t \cR^{k,l}  f\|_{L^p( L^p)}^{\theta}
\\&\lc 2^{-k(\frac{d-1}{p'}-\theta)} 2^{l(\frac{d-2}{p'}-\theta+\eps)} \|f\|_p.\end{align*}
The desired inequality
follows by summing over $l\le k$ and then summing over $k$ (which is possible if $d\ge 3$ and $\frac{d}{d-1}<p\le 2$, $\theta=1/p$).  A similar argument applies for $p>2$.
\end{proof} 
\begin{proof}[Proof of the restricted weak type inequality in Theorem \ref{thm:main-basic}] 
Let  $\sR^{k,l}f(x,t):=f*\nu^{k,l}_t$ and as in the proof of Corollary \ref{cor:Bes} we have that $\sR^{k,l}$ maps $L^p$ to $L^p(L^\infty)$ with operator norm 
$O( 2^{k(\frac 1p-\frac{d-1}{p'}) } 2^{-l (\frac 1p- \frac{d-2}{p'} -\eps)})$.  If  $1\le p< \frac{d-1}{d-2}$ we may (for sufficiently small $\eps$) sum in $l$ and obtain in this range 
\[ 
\|\mathrm{ess\,sup}_{t\in I} |f*\nu^k_t \|_{p} \lc 2^{k(\frac 1p-\frac{d-1}{p'}) } \|f\|_p.
\]
We are now applying the `Bourgain trick' in \cite{Bourgain-CompteR} to sum in $k$ and deduce that \[ 
\|\mathrm{ess \, sup}_{t\in I}  |f*\nu_t|  \|_{L^{p,\infty} } \lc  \|f\|_{L^{p,1}}, \quad p=\tfrac{d}{d-1}. \qedhere
\] 
\end{proof}

The most interesting part of Proposition \ref{prop:mainAkl} is the $L^2$-estimate. The $L^p$ estimates  follow by interpolation with  $L^1$ estimates which we now briefly  discuss.

\subsection*{$L^1$ and $L^\infty$  estimates} In what follows $\beta(x)=\beta_1(x_1,\ox)\beta_0(x')$.
By integration by parts with respect to $\sigma,\tau$ we  obtain the inequality
\Be\label{eq:pointwisebd}
|\nu^{k,l}(x)| \lc_N \frac{2^{k-l}}{(1+2^{k-l}|x_1-g(x')|)^N} \frac{2^{km}}
{(1+2^k|\ox|)^N};
\Ee
moreover  $2^{-k} \nabla \nu^{k,l}$, $2^{l-k} \partial_s \nu^{k,l}_s $, 
$2^{l-2k} \partial_s \nabla \nu^{k,l}_s $ satisfy for $|s|\approx 1$ the same pointwise bounds.
 Hence we obtain 
 \Be \label{eq:L1bound} \|\nu^{k,l}\|_1 + 2^{l-k}\|\partial_s \nu^{k,l}_s\|_1 \lc 1.\Ee
For later use we also record 
\Be \label{eq:derivatives-nukl}\|\nabla \nu^{k,l}\|_1 + 2^{l-k}\|\nabla \partial_s \nu^{k,l}_s\|_1 \lc 2^k.\Ee


We will show in the next section  \S\ref{sec:global} how to prove the $L^p$ boundedness for the global maximal operator and its strengthening in Theorem \ref{thm:stronger}, given the result of Proposition \ref{prop:mainAkl} and \eqref{eq:L1bound}, \eqref{eq:derivatives-nukl}. The proof of Proposition \ref{prop:mainAkl} will be given in \S\ref{sec:propprel} -\S\ref{sec:Tlal}.

\section{The global maximal operator}\label{sec:global}
We  prove the global bound in Theorem \ref{thm:stronger},  given Proposition \ref{prop:mainAkl}. 
The reduction to  Proposition 
\ref{prop:mainAkl} follows closely  arguments in \cite{MuellerSeeger2004}; we include  details  for the convenience of the reader.

Let $I=(\frac 14,4)$ and $u\in C^\infty_c(I)$. We will prove the estimate 
\begin{multline}\label{eq:tobeproven1} \Big\| \sup_{n\in \bbZ} \big\| u(s) f*\nu^{k,l}_{2^n s} \|_{B^{1/p}_{p,1}} \Big\|_p \\\le C_\eps \begin{cases} 
(1+k)^{1/p} 2^{ k (\frac 1p-\frac{d-1}{p'})} 2^{l(\frac{d-2}{p'}-\frac{1}{p}+\eps)} \|f\|_p, &1<p\le 2
\\
(1+k)^{1/p} 2^{ k d/p}  2^{l(\frac{d-1}{p} +\eps )} \|f\|_p, &2\le p<\infty, 
\end{cases} \end{multline}
for $0\le l\le k$; here the Besov-norm  
is taken with respect to the $s$ variable.  Summing in $l,k$ for $p>\frac{d}{d-1}$ implies \eqref{eq:stronger}. 

Recall that $\nu^{k,l}$ is compactly supported and that \[ \Big|\int \nu^{k,l} (x) dx\Big| \lc_N 2^{-kN};\]
this is seen by  using \eqref{eq:nukl} and repeated integration by parts,  with respect to  $(x_1,\ox) $,  if $l$ is small and with respect to $\ox$ if $l$ is large. 

As noted in \cite{MuellerSeeger2004} we can write \[\nu^{k,l}(\ux,\ox)= \cK^{k,l}(\ux,\ox)+ \gamma_{k,l} \rho(\ux,\ox)\] where $\rho \in C^\infty_c(\bbR^{d+m})$ function supported near the origin, $|\gamma_{k,l}|\le c_N 2^{-kN}$ for $0\le l\le k$ and \Be\label{canc} \int \cK^{k,l}(x) d x=0.\Ee 
Set  $\cK^{k,l}_{t}(\ux,\ox) = t^{-d-2m}\cK^{k,l}(t^{-1}\ux,t^{-2}\ox)$, 
$\rho _t(\ux,\ox)= t^{-d-2m} \rho(t^{-1}\underline{x}, t^{-2} \overline {x} )$. 

The contribution from $\ga_{k,l} \rho$ is harmless,  indeed for all $n\in \bbZ$, $s\in [1/4,4] $ with  $j=0,1,2,\dots$
\[ \Big|(\frac{d}{ds})^j\big[ u(s)   f* \rho_{2^n s} (x)\big]\Big|\lc c_j  M f(x)\]
where $M$ is the Hardy-Littlewood maximal operator associated to the Carnot-balls on the group $G$. This implies 
\[ \Big\| \sup_{n\in \bbZ} \| u(s) f*\rho_{2^ns} \gamma_{k,l}  \|_{B^{1/p}_{p,1}} \Big\|_p \le C_N 2^{-kN} \|f\|_p. \]

Therefore we need to show the equivalent of \eqref{eq:tobeproven1} where $\nu^{k,l}$ is replaced by $\cK^{k,l}$. This is implied by two stronger inequalities where the $\sup$ in $n$ is replaced by an $\ell^2$ norm in $n$ when $1<p\le 2$ and by an $\ell^p$ norm in $n$ when $2<p<\infty$.

We shall prove   for $1<p\le 2$
\Be\label{eq:tobeproven2}
\Big\| \Big(\sum_{n\in \bbZ} \| u (s)f*\cK^{k,l}_{2^n s}\|_{B^{1/p}_{p,1}}^2\Big)^{1/2} \Big\|_p \le C_{\eps,p}
(1+k)^{1/p} 2^{ k (\frac 1p-\frac{d-1}{p'}) } 2^{l(\frac{d-2+\eps}{p'}-\frac{1}{p})} \|f\|_p
\Ee and for $2\le p<\infty$
\Be\label{eq:tobeproven3}
\Big\| \Big(\sum_{n\in \bbZ} \| u(s) f*\cK^{k,l}_{2^n s} \|_{B^{1/p}_{p,1}}^p\Big)^{1/p} \Big\|_p \le C_{\eps,p}
(1+k)^{1/p} 2^{ -k\frac{d-2}p} 2^{l\frac{d-3+\eps}{p}} \|f\|_p
\Ee 
We use the  interpolation inequality 
\Be \label{Besov-embedding}  \|g\|_{B^{1/p}_{p,1}} \lc \|g\|_p + \|g\|_p^{1- 1/p} \|g'\|_p^{ 1/p}
\Ee 
which is elementary and also expresses the  identification of $B^{1/p}_{p,1}$ as the real interpolation space $[L^p, W^{1,p}]_{1/p,1}$.

We focus on the case $1\le p\le 2$ and prove \eqref{eq:tobeproven2}.  The  embedding inequality implies 
\begin{multline*} 
\Big\| \Big(\sum_{n\in \bbZ} \| u(s) f*\cK^{k,l}_{2^n s} \|_{B^{1/p}_{p,1}}^2\Big)^{1/2} \Big\|_p \lc \Big\| \Big( \sum_{n\in \bbZ} \Big( \int_I|f*\cK^{k,l}_{2^ns}|^p ds\Big)^{2/p}\Big)^{1/2}\Big\|_p \\ +
\Big\| \Big(\sum_n
\Big(\int_I|f* \cK_{2^ns}^{k,l}|^ pds\Big)^{\frac{2}{pp'} }
\Big(\int_I\big| f* \tfrac{d}{ds}\cK_{2^ns}^{k,l}\big|^ pds\Big)^{\frac{2}{p^2} }
\Big)^{1/2} \Big\|_p  =:\cE_1+\cE_2\,.
\end{multline*}
For the first expression on the right hand side we have by the integral Minkowski's inequality for $\ell^{2/p}$
\[\cE_1\le  \Big\|\Big(\int_I \Big(\sum_n |   f* \cK_{2^ns}^{k,l}  |^2\Big)^{p/2} ds \Big)^{1/p} \Big\|_p  \]  and for the second term  we use  applications of H\"older and then the integral Minkowski inequality
\begin{align*}
    &\cE_2\le \Big\| 
    \Big( \sum_n\Big( \int_I| f*\cK^{k,l}_{2^ns} |^pds\Big)^{\frac 2p}\Big)^{\frac{1}{2p'}}
\Big( \sum_n\Big( \int_I| f*\tfrac{d}{ds} \cK^{k,l}_{2^ns} |^pds\Big)^{\frac 2p}\Big)^{\frac{1}{2p}}
\Big\|_p
\\
&\le \Big\| \Big( \sum_n\Big( \int_I| f*\cK^{k,l}_{2^ns} |^pds\Big)^{\frac 2p}\Big)^{\frac{1}{2}}\Big\|_p^{\frac{1}{p'} }
 \Big\| \Big( \sum_n\Big( \int_I| f*\tfrac{d}{ds} \cK^{k,l}_{2^ns} |^pds\Big)^{\frac 2p}\Big)^{\frac{1}{2}}\Big\|_p^{\frac{1}{p} }
 \\&\le
\Big\|\Big( \int_I\Big(\sum_n| f*\cK^{k,l}_{2^ns} |^2\Big)^{\frac p2}ds\Big)^{\frac 1p} \Big\|_p^{\frac{1}{p'} }
\Big\|\Big( \int_I\Big(\sum_n| f*\tfrac {d}{ds} \cK^{k,l}_{2^ns} |^2\Big)^{\frac p2}ds\Big)^{\frac 1p} \Big\|_p^{\frac{1}{p} }.
 \end{align*}
Since we may interchange the $x$ and the $s$ integration, everything for $p\le 2$ follows now from 
\begin{subequations} \label{eq:tobeproven4}
\begin{multline}\label{eq:tobeproven4a}
\Big(\iint_{G\times I}  \Big(\sum_n|f* \cK^{k,l} _{2^n s}(x)|^2\Big)^{\frac p2} dx\, ds \Big)^{\frac 1p} \\ \lc_\eps( 1+k)^{1/p}2^{-k\frac{d-1}{p'} } 2^{l(\frac{d-2}{p'}+\eps) } \|f\|_p
\end{multline}
and \begin{multline} \Big(\iint_{G\times I}  \Big(\sum_n|f* \tfrac d{ds} \cK^{k,l} _{2^n s}(x) |^2\Big)^{\frac p2} dx\, ds \Big)^{\frac 1p}\\
 \lc_\eps 2^{k-l} (1+k)^{1/p}2^{-k\frac{d-1}{p'} } 2^{l(\frac{d-2}{p'}+\eps) } \|f\|_p\,.
\end{multline}
\end{subequations}
We prove \eqref{eq:tobeproven4} by Marcinkiewicz interpolation, using the $L^2$ bounds  
\begin{subequations}\label{eq:tobeprovenL2}
\begin{multline}\label{eq:tobeproven51}
 \Big(\iint_{G\times I} \sum_{n\in \bbZ}|f*\cK^{k,l}_{2^n s}|^2 dx\,ds\Big)^{\frac 12}
\lc_\eps (1+k)^{\frac 12} 2^{-k\frac{d-1}{2} + l(\frac{d-2}{2}+\eps)} \|f\|_{L^2},
\end{multline}
\begin{multline} \label{eq:tobeproven52}
\Big(\iint_{G\times I} \sum_{n\in \bbZ}\big |f*\tfrac{d}{ds}\cK^{k,l}_{2^n s}(x)\big|^2 dx\,ds\Big)^{\frac 12}
\\ \lc_\eps 2^{k-l}  (1+k)^{\frac 12}  2^{-k\frac{d-1}{2} + l(\frac{d-2}{2}+\eps)} \|f\|_{L^2}, 
\end{multline} 
\end{subequations}
and the weak type $(1,1)$ inequalities
\begin{subequations}
\begin{multline} \label{eq:weaktype1}
\meas\big(\{(x,s)\in G\times I:\big(\sum_{n\in \bbZ} |f*\cK^{k,l}_{2^n s} (x) |^2\big)^{\frac 12}>\alpha\} \big)\\ \lc (1+k) \alpha^{-1}\|f\|_1,
\end{multline} 
\begin{multline} 
\label{eq:weaktype2}
\meas\big (\{ (x,s)\in G\times I: \big(\sum_{n\in \bbZ} |f*\frac d{ds}\cK^{k,l}_{2^n s} (x) |^2\big)^{1/2}>\alpha\} \big )\\ \lc (1+k)2^{k-l} \alpha^{-1}\|f\|_1\,.
\end{multline}
\end{subequations} The kernels $\cK^{k,l}_{2^ns}$ and $2^{l-k} \frac{d}{ds} \cK^{k,l}_{2^n s}$ 
enjoy similar qualitative and quantitative properties and therefore we shall only give the proofs of  \eqref{eq:tobeproven51} and \eqref{eq:weaktype1}; the proofs of  \eqref{eq:tobeproven52}, \eqref{eq:weaktype2} require only minor notational modifications.
\begin{proof}[Proof of \eqref{eq:tobeproven51}]
As a consequence of Proposition \ref{prop:mainAkl} and scaling  we have, for each fixed $n$,
\[\Big(\int_I \|f*\cK^{k,l}_{2^n s}\|_{L^2} ^2ds\Big)^{1/2}
\lc_\eps 2^{-k\frac{d-1}{2} + l(\frac{d-2}{2}+\eps)} \|f\|_{L^2}.
\]
Note that Proposition \ref{prop:mainAkl} was stated for $\nu_t^{k,l}$,
but clearly by the above discussion we can replace  $\nu^{k,l}_t$ with $\cK^{k,l}_t$.

To combine the estimates for different $n$ we
need the   following variant of the Cotlar-Stein lemma.
    Let $H_1, H_2$ be Hilbert spaces and let $T_n: H_1\to H_2$, $n\in \bbZ$ be bounded operators. Assume $B\ge 2A$,   and \[\|T_n\|_{H_1\to H_2}\le A, \quad  \|T_n T_{n'} ^*\|_{H_2\to H_2}\le B^2 2^{-\eps|n-n'|}
    \] for all $n,n'\in \bbZ$. Then for all $f\in H_1$
    \Be\label{eq:CotlarSteinAB}\Big(\sum_{n\in \bbZ} \|T_n f\|_{H_2}^2\Big)^{1/2} \lc_\eps A\sqrt{\log(\eps^{-1}B/A)} \|f\|_{H_1}. \Ee
     This is proved for the case $H_1=H_2$  in \cite[Lemma 3.2]{MuellerSeeger2004} but the  proof also extends  to the situation of two different Hilbert spaces. 
     
     We apply this  with $H_1=L^2(\bbR^{d+m})$ and $H_2= L^2(\bbR^{d+m}\times[1,2])$, for the operators 
     $T_n:H_1\to H_2$ given by $T_nf(x,s)= f*\cK^{k,l}_{2^n s}$.
     Clearly we have $\|T_n T_{n'}^*\|\lc_\eps A_{k,l}^2$ with  $A_{k,l} =     2^{-k(d-1)/2+l(d-2+\eps)/2}$; we use this for $|n-n'|\le 2(m+2)k.$  
     For large $|n-n'|$ we need to establish exponential decay in $|n-n'|$ but in view of the logarithmic dependence on $B$ in \eqref{eq:CotlarSteinAB} we do not have to care about any blowup in  terms of  powers of $2^k$ in such an estimate.
     
     As 
    $\nu^{k,l}_s$ and  $2^{-k} \nabla \nu^{k,l}_s$
    are for $s\approx 1$ pointwise dominated by the right hand side of \eqref{eq:pointwisebd} 
 the kernels
      $\cK^{k,l}_s$, $2^{-k} \nabla \cK^{k,l}_s$, 
      satisfy up to a constant the same bounds.
Since they are also  supported on a fixed common compact set we have 
   \Be\label{eq:variousL1estimates}\|\cK^{k,l}_s\|_1+2^{-k} \| \nabla \cK^{k,l}_s\|_1 
   =O(1)\Ee
   for $|s|\approx 1$. For the orthogonality arguments it is convenient  to just use 
the following trivial  pointwise bounds with an exponential dependence on $k$, with 
$N>\max\{d,m\}$:
\Be\label{eq:trivpointw}
\begin{aligned} |K^{k,l}(x)|+ 2^{-k} |\nabla K ^{k,l}(x)| &\lc_N 2^{k (m+1)} (1+|x|)^{-2N}
\\
&\lc_N 2^{k(m+1)} (1+|\underline x|)^{-N} (1+|\overline x|)^{-N} .
\end{aligned}
\Ee
Clearly, \eqref{eq:trivpointw} is implied by  the stronger bounds in \eqref{eq:pointwisebd}.

   A standard argument   using \eqref{eq:trivpointw} and the cancellation property \eqref{canc} gives the (non-optimal)  estimate 
   \[   \| \widetilde \cK^{k,l}_{2^n s} *\cK^{k,l}_{2^{n'} s} \|_1\lc 2^{k(2m+3)}  2^{-|n-n'|/2} ;  \] 
   here  we use the notation $\widetilde \cK(x)=\overline{\cK(-x)}$. We  refer to  \cite[ch.XIII, \S5.3]{Stein-harmonic} for a very similar calculation. Consequently 
\[ \|f* \widetilde \cK^{k,l}_{2^n s} *\cK^{k,l}_{2^{n'} s} \|_2
+ 2^{l-2k} \|f*\partial_s\widetilde \cK^{k,l}_{2^n s} *\partial_s\cK^{k,l}_{2^{n'} s} \|_2 
 \lc 2^{k(2m+3)} 2^{-|n-n'|/2} \|f\|_2.\] 
 Hence we  get 
$\|T_{n'}T_{n}^*\|
\lc B^2 2^{-|n-n'|/2}$, with $B^2=2^{k(2m+3)} $. We may thus apply the almost orthogonality inequality \eqref{eq:CotlarSteinAB} with   $\log(B/A_{k,l}) \lc 1+k$, and 
\eqref{eq:tobeproven51} follows.
\end{proof}
\begin{proof}[Proof of \eqref{eq:weaktype1}] We use a Calder\'on-Zygmund decomposition of $f\in L^1(G)$, at height $\alpha$, as described in  \cite[Ch.3A]{FollandStein}. 
The Carnot-Carath\'eodory balls $B(0,r)$ at the orgin are given by $\{y: |\uy|\le r,|\oy|\le r^2\}$, and the ball centered at some  $z$ is just the left translate, \textit{i.e.,} $B(z,r)=\{y: z^{-1}\cdot y\in B(0,r)\}$. 

One can decompose an $L^1(G)$ function $f$ as $f=g+b$ where $\|g\|_1\lc \|f\|_1$, $\|g\|_\infty\lc \alpha$, and $b=\sum_\nu b_\nu$ where $b_\nu$ are 
supported on the balls 
$B(y_\nu,r_\nu)$ which are  explicitly given by 
\[B(y_\nu,r_\nu) = 
\{(\uy,\oy): |-\uy_\nu+\uy|\le r_\nu, |-\oy_\nu+\oy-\uy_\nu^\intercal \vec S \uy|\le r_\nu^2\}.\]
Moreover, the $b_\nu$  satisfy $\int b_\nu (y) dy=0$ and  $\sum_\nu\|b_\nu\|_1\lc \|f\|_1$. Finally the $B(y_\nu, r_\nu)$ have bounded overlap and if for $A\ge 2$ we define 
 $\Om_\alpha:= \cup_\nu  B(y_\nu,A r_\nu)$,  then
\Be\label{eq:measureOm} \meas(\Omega_\alpha ) \lc A^{d+2m} \alpha^{-1} \|f\|_1.\Ee

We set  $\|\vec S\|:=\sum_{i=1}^m\|S_i\|$ (with the matrix norm associated to the Euclidean norm in $\bbR^d$) and  we will choose  $A\ge 10 (\|\vec S\|+1).$

We now turn to the estimation of \eqref{eq:weaktype1}. By Chebyshev's inequality and then \eqref{eq:tobeproven51}
\begin{align*}
&\meas\big(\{(x,s)\in G\times I:\big(\sum_{n\in \bbZ} |g*\cK^{k,l}_{2^n s} (x) |^2\big)^{\frac 12}>\alpha/2\} \big) 
\\ &\lc\alpha^{-2} \iint_{G\times I} \sum_{n\in \bbZ} |g*\cK^{k,l}_{2^ns} |^2 dx\, ds 
\lc\alpha^{-2} \|g\|_2^2 \lc \alpha^{-1}  \|f\|_1.
\end{align*} 
Moreover $\meas(\Omega_\alpha\times I) \le 4|\Omega_\alpha|\lc  \alpha^{-1}\|f\|_1.$
It remains to estimate the contribution involving  $b$, namely 
\begin{align*}
    &\meas\big(\{(x,s)\in \Omega_\alpha^\complement\times I:\big(\sum_{n\in \bbZ} |b*\cK^{k,l}_{2^n s} (x) |^2\big)^{\frac 12}>\alpha/2\} \big) 
    \\
    &\lc \alpha^{-1}\int_I\int_{\Omega_\alpha^\complement} \big(\sum_{n\in \bbZ} |b*\cK^{k,l}_{2^n s} (x) |^2\big)^{\frac 12} \, dx\,ds
    \\
    &\lc \alpha^{-1} \sum_\nu  \int_I\int_{ (B(y_\nu, A r_\nu))^\complement}\big(\sum_{n\in \bbZ} |b_\nu*\cK^{k,l}_{2^n s} (x) |^2\big)^{\frac 12} \, dx\, ds
    \\ &\lc \alpha^{-1} \sum_\nu \sup_{s\in I} \sum_{n\in \bbZ}
    \int_{(B(y_\nu, A r))^\complement} |b_\nu*\cK^{k,l}_{2^n s} (x) |\,  dx\
\end{align*}

We claim that for $s\in I$ and fixed $\nu$,
\Be\label{eq:L1claim}
\int_{(B(y_\nu, A r))^\complement} |b_\nu*\cK^{k,l}_{2^n s} (x) |\,  dx
\lc \begin{cases}
    1& \text{ for all $n\in \bbZ$}
    \\
    2^{k(m+1)} 2^{n}r_\nu^{-1}  &\text{ for $n\le \log_2 r_\nu$}
    \\ 2^{k(m+2)} (2^{-n} r_\nu)^{1/2} &\text{ for $n\ge \log_2 r_\nu$}
\end{cases}
\Ee 
We use the first bound when $\log_2 r_\nu -10 km\le n\le \log_2 r_\nu+ 10 k m$,  the second bound when $n< \log_2r_\nu -10 km$ and the third when $n> \log_2 r_\nu +10 km$. Summing these yields 
\[\sup_{s\in I} \sum_{n\in \bbZ} \int_{(B(y_\nu, A r))^\complement} |b_\nu*\cK^{k,l}_{2^n s} (x) |\,  dx\lc (1+k)\|b_\nu\|_1.\]
If we then sum over $\nu$ and use $\sum_\nu\|b_\nu\|_1\lc \|f\|_1$  above we  get 
\[\meas\big(\{(x,s)\in G\times I:\big(\sum_{n\in \bbZ} |g*\cK^{k,l}_{2^n s} (x) |^2\big)^{\frac 12}>\alpha/2\} \big) \lc (1+k)\alpha^{-1}\|f\|_1.\]

We now prove \eqref{eq:L1claim}.
The $O(1)$ bound in \eqref{eq:L1claim} is immediate and follows from $\|\cK^{k,l}_{2^n s}\|_1=O(1)$ which is a consequence of \eqref{eq:L1bound}.  For the second and third case in \eqref{eq:L1claim} we
will just use the trivial pointwise bounds in \eqref{eq:trivpointw}.

We now assume that $n\le \log_2 r_\nu$ to prove the second bound in \eqref{eq:L1claim}. Here we will strongly use that the integration is extended over the complement of $(B(y_\nu, Ar_\nu))^\complement$ and split it as $X_{1}\cup X_{2} $ where
\begin{align*} X_{1}&=\{(\ux,\ox): |\ux-\underline y_\nu| \ge  Ar_\nu \},
\\
X_{2} &= \{ (\ux,\ox): |\ux-\underline y_\nu|\le Ar_\nu, |-\oy_\nu+\ox-\uy_\nu^\intercal\vec S \ux|\ge A^2r_\nu^2\}.
\end{align*}
Then
\begin{multline*} 
\int_{X_{1}} |b_\nu*\cK^{k,l}_{2^n s} (x)|dx\,\lc \, 
2^{k(m+1)} \int_{B(y_\nu,r_\nu) }|b_\nu(y)| \,\,\times\\ \int_{|\underline x-\underline y_\nu|\ge Ar_\nu } \frac{(2^ns)^{-d} } { (\frac{|\underline x-\underline y|}{2^ns} )^N } 
\int_{\overline x}  \frac{(2^n s)^{-2m} }
{ (1+\frac{ | \overline x-\overline y +\underline x^\intercal\vec S\underline y| } {(2^n s)^2} )^{N} } d\overline x\,d\underline x\, dy\,.
\end{multline*}
The $\overline x$-inner integral is $O(1)$ and in the $\underline x$-integral we can use $|\underline x-\underline y|\approx|\underline x-\underline y_\nu| $ and  get a bound
$(Ar_\nu (2^ns)^{-1} )^{d-N}$ here.  Hence 
\Be\int_{X_{1}} |b_\nu*\cK^{k,l}_{2^n s} (x)|dx\lc 2^{k(m+1)} (2^{-n} r_\nu)^{d-N} \lc 2^{k(1+m)} 2^nr_\nu^{-1} \|b_\nu\|_1.
\Ee  For the $X_{2}$-contribution 
\begin{multline} \label{eq:X2-integral}
\int_{X_{2}} |b_\nu*\cK^{k,l}_{2^n s} (x)|dx\,\lc \, 
2^{k(m+1)} \int_{B(y_\nu,r_\nu) }|b_\nu(y)| \,\,\times\\ \int_{|\underline x-\underline y_\nu|\le Ar_\nu } \frac{(2^ns)^{-d} } { ( 1+\frac{|\underline x-\underline y|}{2^ns} )^N } 
\int_{|\overline x-\overline y_\nu+  \underline x^\intercal \vec S \underline y_\nu| \ge A^2 r_\nu^2}  \frac{(2^n s)^{-2m} }
{ (\frac{ | \overline x-\overline y +\underline x^\intercal\vec S\underline y| } {(2^n s)^2} )^{N} } d\overline x\,d\underline x\, dy\,.
\end{multline}
Here we gain in the $\overline x$-integral. For this observe for $x\in X_2$, $y\in B(y_\nu,r_\nu)$
\begin{align*} &\Big| |\overline x-\overline y +\underline x^\intercal\vec S\underline y| 
-
|\overline x-\overline y_\nu +\underline x^\intercal\vec S\underline y_\nu|\Big| 
\le |\overline y_\nu-\overline y+ \underline x^\intercal \vec S(\underline y-\underline y_\nu)|
\\ &\le|\overline y_\nu-\overline y+\underline y_\nu^\intercal \vec S(\underline y-\underline y_\nu)|+ |(\underline x-\underline y_\nu) ^\intercal\vec S (\underline y-\underline y_\nu)|
\\&\le |\overline y-\overline y_\nu+\underline y^\intercal\vec S \underline y_\nu|+|\underline x-\underline y_\nu| \|\vec S\| |\underline y-\underline y_\nu| \le r_\nu^2+ A\|\vec S\| r_\nu^2\le (Ar_\nu)^2/2
\end{align*}
(here we used that $|\underline x-\underline y_\nu|\le Ar_\nu$ in $X_{2}$ and $A\gg\|\vec S\|$). The displayed inequality tells us that 
we can replace 
$|\overline x-\overline y +\underline x^\intercal\vec S\underline y|$ with 
$|\overline x-\overline y_\nu +\underline x^\intercal\vec S\underline y_\nu|$ 
in the integrand of the inner integral  in \eqref{eq:X2-integral}. Then we get the bound $O\big( \big(Ar_\nu (2^ns)^{-1} \big)^{-(2N-2m)}\big)$ 
for this inner integral. This proves the second bound in \eqref{eq:L1claim}.

Finally, we  prove the third bound in \eqref{eq:L1claim};  assume $2^n\ge r_\nu$ and extend the integration over all of $\bbR^{d+2m}$. Let,  with $\delta\in (0,1)$,  
\[ X_3=\{x:|\underline x-\underline y_\nu|\ge r_\nu\, (\frac{2^{n}}{r_\nu})^{1+\delta} \}, \quad X_4=X_3^\complement.\]
Clearly for $y\in B(y_\nu, r_\nu)$ we have $|\underline x-\underline y|\approx |\underline x-\underline y_\nu|$ and thus integrating first in $\ox$, 
\begin{align*} 
\int_{X_{3}} |b_\nu*\cK^{k,l}_{2^n s} (x)|dx\,&\lc \, 
2^{k(m+1)} \int_{|\underline x-\underline y_\nu|\ge r_\nu(2^{n}/r_\nu)^{1+\delta}  } \frac{(2^ns)^{-d} } { (\frac{|\underline x-\underline y_\nu|}{2^ns} )^N } 
d\underline x\,\|b_\nu\|_1
\\&\lc 2^{k(m+1)}(2^n r_\nu^{-1})^{-\delta(N-d)}  \|b_\nu\|_1.
\end{align*}
For $x\in X_4$ we use the mean value zero property of $b_\nu$ and write 
\begin{multline*} b_\nu*\cK^{k,l}_{2^ns} (x)= \\\int b_\nu(y) \Big[\cK^{k,l}_{2^n s} (\ux-\uy,\ox-\oy+\ux^\intercal\vec S \uy) 
- \cK^{k,l}_{2^n s} (\ux-\uy_\nu,\ox-\oy_\nu+\ux^\intercal\vec S \uy_\nu)\Big] dy
\end{multline*}
and from this 
\begin{multline*}
    \int_{X_{4}} |b_\nu*\cK^{k,l}_{2^n s} (x)|dx\lc 2^{k(m+2)} \|b_\nu\|_1
    \,\,  \times \\
\sup_{y\in B(y_\nu, r_\nu)} \Big[ (2^n s)^{-1}|\underline y_\nu-\underline y|   +(2^{2n}s^2)^{-1} 
\sup_{x\in X_4} 
|-\overline y+\overline y_\nu+\underline x^\intercal  \vec S(\underline y-\underline y_\nu) | \Big].
\end{multline*}

For $x\in X_4$ and $y\in B(y_\nu, r_\nu)$,
\begin{align*} |-\oy+\oy_\nu +\underline x^\intercal  \vec S(\underline y-\underline y_\nu)|&\le 
|(\underline x-\underline y_\nu)^\intercal  \vec S(\underline y-\underline y_\nu)|+ |
-\oy+\oy_\nu +\underline y_{\nu}^\intercal \vec S(y-y_\nu)| 
\\&\le |\underline x-\underline y_\nu|\|\vec S\||\underline y-\underline y_\nu|+
|\oy-\oy_\nu -\underline y_{\nu}^\intercal \vec Sy| 
\\ &\le 2r_\nu\big(\frac{2^n}{r_\nu}\big)^{1+\delta} \|\vec S\| r_\nu+ r_\nu^2 \lc   2^{n(1+\delta)} r_\nu^{1-\delta} + r_\nu^2.
\end{align*}
Using this in the estimate above and combining with the integral over $X_3$ yields 
\begin{multline*}
\int |b_\nu*\cK^{k,l}_{2^n s} (x)|dx \\ \lc \|b_\nu\|_1 
\Big[ 2^{k(m+1)} \big(\frac{r_\nu}{2^n}\big)^{\delta(N-d)} + 2^{k(m+2)} 
\Big( \frac{r_\nu}{2^n} +\big(\frac{r_\nu}{2^n}\big)^{1-\delta}+ \frac{r_\nu^2}{2^{2n}}\Big)\Big]
\end{multline*} 
and choosing $\delta=1/2$ gives the third estimate in \eqref{eq:L1claim}, for $2^n\ge r_\nu$. 
\end{proof}

\begin{proof}[The case $2\le p<\infty$] We prove   \eqref{eq:tobeproven3}. We are again using the Sobolev embedding inequality \eqref{Besov-embedding}, now for $p>2$. We proceed similarly as in the $p<2$ case (however the proof  is now  simpler since we are working with $\ell^p$-valued functions and not with $\ell^2$-valued functions and this allows to use  trivial $L^\infty$ bounds  in place of the previously used Calder\'on-Zygmund estimates). After  uses of H\"older's  inequality we get 
\begin{multline*} 
\Big\| \Big(\sum_{n\in \bbZ} \| u(s) f*\cK^{k,l}_{2^n s} \|_{B^{1/p}_{p,1}}^p\Big)^{\frac 1p} \Big\|_p \lc \Big(\sum_{n\in \bbZ} \int_I \big \|f*\cK^{k,l}_{2^ns} \big\|_p^p ds\Big)^{\frac 1p}
\\+ \Big(\sum_{n\in \bbZ} \int_I \big \|f*\cK^{k,l}_{2^ns} \big\|_p^p ds\Big)^{\frac{1}{pp'}}
\Big(\sum_{n\in \bbZ} \int_I \big \|f*\tfrac {d}{ds}\cK^{k,l}_{2^ns} \big\|_p^p ds\Big)^{\frac{1}{p^2}}\,.
\end{multline*}
Hence  \eqref{eq:tobeproven3} follows from
\begin{subequations} 
\Be\label{eq:tobeproven61}
\Big(\sum_{n\in \bbZ} \int_I \big \|f*\cK^{k,l}_{2^ns} \big\|_p^p ds\Big)^{\frac 1p}\lc_\eps(1+k)^{\frac 1p} 2^{-k\frac {d-1}{p}+l(\frac{d-2}{p}+\eps)}\|f\|_p
\Ee
and
\Be\label{eq:tobeproven62}
\Big(\sum_{n\in \bbZ} \int_I \big \|f*\tfrac {d}{ds}\cK^{k,l}_{2^ns} \big\|^p_p ds\Big)^{\frac 1p}\lc_\eps 2^{k-l} (1+k)^{\frac 1p} 2^{-k\frac {d-1}{p}+l(\frac{d-2}{p}+\eps)}\|f\|_p\,.
\Ee
\end{subequations}
We now have for $p=\infty$ the inequalities
\begin{subequations} \Be\label{eq:Linfty-one}
\sup_{n}\sup_{s\in I}\sup_{x\in G} |f*\cK^{k,l}_{2^ns}(x)| \lc \|f\|_\infty 
\Ee
and
\Be\label{eq:Linfty-two} \sup_{n}\sup_{s\in I}\sup_{x\in G}
|f*\tfrac {d}{ds}\cK^{k,l}_{2^ns} (x)|\lc 2^{k-l} \|f\|_\infty
\Ee
\end{subequations}
which are  immediate consequences of $\|\cK^{k,l}_{2^ns}\|= O(1)$, $\|\tfrac {d}{ds} 
\cK^{k,l}_{2^ns}\|= O(2^{k-l})$ (see \eqref{eq:L1bound}). Inequality \eqref{eq:tobeproven61} follows from 
\eqref{eq:tobeproven51}, \eqref{eq:Linfty-one} by interpolation, and likewise
\eqref{eq:tobeproven62} follows from 
\eqref{eq:tobeproven52}, \eqref{eq:Linfty-two}. 
This finishes the proof of  \eqref{eq:tobeproven3}. 
\end{proof} 
\subsubsection*{Comment on Remark \ref{rem:higher-s-derv}}  An examination of the proof above allows, for fixed $p$, inclusions of factors of $2^{(k-l)\beta}$ on the left hand sides of the inequalities \eqref{eq:tobeproven2} for $p\le 2$ and \eqref{eq:tobeproven3} for $p>2$.
Specifically we can have $\beta<\frac{d-1}{p'}-\frac 1p$  for $\frac{d}{d-1}<p\le 2$ and $\beta<\frac{d-2}{p}$ for $p>2$. This observation can be used to justify replacing the Besov space $B^{1/p}_{p,1}$ in the $s$ variable with $B^{\beta+1/p}_{p,1}$ in those ranges. For the cases where $\beta+1/p\ge 1$  one needs to use that also for $j>1$ the terms  $2^{(l-k)j} (\frac{d}{ds})^j\nu^{k,l}_{s}$  behave like $\nu^{k,l}_s$ (in particular this requires a straightforward  extension of calculations for $j=1$ at the end of \S\ref{sec:shear} below).

\section{Basic  considerations for the $L^2$ estimate in  Proposition \ref{prop:mainAkl}} \label{sec:propprel}


It  suffices to prove the proposition for functions that are supported in a small neighborhood of the origin of diameter $\ll\vr^2\ll 1$ since one can use a standard argument using a tiling via the group translations to reduce to the general case (for more details we refer to  \S2 of \cite{RoosSeegerSrivastava-imrn}).  
We follow  \cite{RoosSeegerSrivastava-imrn} to discuss further reductions which will simplify the forthcoming $L^2$ bounds.


\subsection{A shear transformation} \label{sec:shear}
When acting on functions $f$ supported in an  $\varrho^2$-ball centered at the origin we can rewrite $f*\nu^{k,l}_t=\fA^{k,l} f(x,t)$ where \[ \fA^{k,l} f(x,t) 
=\int K^{k,l}_t(x,y) f(y) dy\] and  $K^{k,l}_t$ is given by 
\begin{align} \notag K^{k,l}_t(x,y)&=t^{-d-2m} \nu^{k,l}(t^{-1}(\ux-\uy), t^{-2} (\ox-\oy+ \ux^\intercal\vec S \uy))\\ 
 \label{eq:Kklt-osc}
 &= \mathring a(x,t,y) \iint \zeta_{k,l}(\sigma,\tau)e^{i \frac{\sigma}t (x_1-y_1-t g( \tfrac{x'-y'}t))+i\inn{ \frac{\tau}{t^2} } { \overline x-\overline y + \underline x^\intercal \vec S  \underline y} }d\sigma d\tau 
 \end{align}
 with 
 \[ \mathring  a(x,t,y)= (2\pi)^{-m-1} t^{-d-2m}
\beta_1(\tfrac{x_1-y_1}t,\tfrac{\ox-\oy+ \underline x^\intercal \vec S\underline y}{t^2}) \beta_0(\tfrac{x'-y'}t) \chi_\vr(y)\] 
and $y\mapsto \chi_\vr(y)$ supported in a $\vr^2$ neighborhood of the origin.
Notice that $ \mathring a$ lives, where 
 $|t-1|\le \vr^2$,  $|x_1-1|\lc \vr^2$, $|x'|, |\ox|, |y|\lc \vr^2$.  Introducing the frequency variables $\vth=(\vth_1,\overline\vth)\in \bbR^{m+1}$, with  $\vth_1=2^{1-k}t^{-1}\sigma$, $\overline \vth_i=2^{1-k}  t^{-2} \tau_i$  we can rewrite the integral as
\begin{multline}\label{eq:Kklt}
K^{k,l}_t(x,y)= 2^{(k-1)(1+m)}  \mathring a(x,t,y)  \iint \zeta_1(2^lt\vth_1) \upsilon_1(t,\vth_1,\overline\vth) \,\,
\times \\
 e^{i2^{k-1}  ( \vth_1 (x_1-y_1-t g( \tfrac{x'-y'}t))+\inn{ \overline \vth}{\overline x-\overline y + \underline x^\intercal\vec S  \underline y} )} d\vartheta_1 d\overline\vartheta.
\end{multline} 
where we have abbreviated \[\upsilon_1(t,\vth_1,\overline\vth)= t^{1+2m} \zeta_1((t^2\vartheta_1^2+t^4|\overline \vartheta|^2)^{1/2}).\]
When $l=k$ we get a similar formula where $\zeta_1(2^kt\vth_1)$ is replaced with  $\zeta_0(2^kt\vth_1).$

Following  \cite{RoosSeegerSrivastava-imrn} we rewrite the phase and verify that  
\begin{align}\notag
    &\vth_1\big(x_1-y_1-t g( \tfrac{x'-y'}t) \big)+\sum_{i=1}^m  \overline \vth_i ( \overline x_i-\overline y_i + \underline x^\intercal S_i  \underline y) 
    \\ \label{eq:rewritephase} &\qquad=  \big (\vth_1-\sum_{i=1}^m \overline\vth_i  \ux^\intercal  S_i e_1 \big )  \big (x_1-y_1-tg (\tfrac{x'-y'}t) \big)  
    \\ \notag
    &\qquad +
    \sum_{i=1}^m \overline\vth_i\big(\ox_i+ x_1 \ux^\intercal S_i e_1-\oy_i +\ux^\intercal S_i P^\intercal y' -\ux^\intercal S_i e_1 t g(\tfrac{x'-y'}t)   \big ).
\end{align}

Setting $\theta_1= \vth_1-\sum_{i=1}^m \overline \vth_i \ux^\intercal S_i e_1$,
$\overline \theta_i=\overline \vth_i$, we can write the Schwartz kernel using the $(\theta_1,\overline \theta)$ frequency variables.
Define the phase function $\Psi$ by 
\begin{multline}\label{eq:Psidef} \Psi(x,t,y,\theta)=\\ \theta_1(x_1-y_1-tg(\tfrac{x'-y'}t)) +\sum_{i=1}^m\overline \theta_i (\overline x_i -\overline y_i +\ux^\intercal S_iP^\intercal y'- \ux^\intercal S_i e_1 tg(\tfrac{x'-y'}{t}) )\,.
\end{multline}
Making the  substitution  $(\vth_1,\overline\vth)= (\theta_1+\ux^\intercal S^{\overline\theta} e_1, \overline\theta)$, here using the notation  $S^{\overline \theta}=\sum_{i=1}^m \overline\theta_iS_i$ we obtain 
\begin{multline} \label{eq:Kklt-alt} K^{k,l}_t(x,y)= 2^{(k-1)(1+m)}\mathring  a(x,t,y) 
\iint e^{i 2^{k-1} \Psi(\underline x, \overline x+x_1 \underline x^\intercal \vec S e_1, t,y,\theta) } \,\times\\
\zeta_1(2^l t(\theta_1+\ux^\intercal S^{\overline\theta} e_1)) \upsilon_1(t,\theta_1+\ux^\intercal S^{\overline\theta} e_1, \overline\theta) d\theta_1 d\overline\theta;
\end{multline}
here note the nonlinear shear transformation
\[ (\ux,\ox)\mapsto (\ux,\ox +x_1\ux^\intercal \vec Se_1) 
\]  which is  present  in the phase function. 
It is now natural to  consider a variant $\cA^{k,l}$ which is related to $\fA^{k,l}$ via this   shear transformation. 
Let
\[ \cA^{k,l}f(x,t)\equiv \cA^{k,l}_t f(x) = \int \sK^{k,l}_t(x,y) f(y) dy \] 
where the Schwartz kernel is given by  
\begin{multline} \label{eq:cKklt}
\sK^{k,l}_t(x,y)= 2^{(k-1)(1+m)}   a(x,t,y) \iint e^{i2^{k-1}  \Psi(x,t,y,\theta) }   \times \\ \zeta(2^lt(\theta_1 +  \ux^\intercal S^{\overline \theta} e_1)) \upsilon(t,\theta_1+\ux^\intercal S^{\overline\theta} e_1, \overline\theta)  
  d\theta_1 d\overline\theta,
\end{multline}  
here $s\mapsto \zeta(s)$ is supported where $|s|\approx 1$, and we use the modification that for $k=l$ we replace   $\zeta(2^kt(\theta_1 +  \ux^\intercal S^{\overline \theta} e_1))$ with $\zeta_0(2^kt(\theta_1 +  \ux^\intercal S^{\overline \theta} e_1)) $. 
We are still assuming that $a$ is supported  where
\Be\notag\label{eq:a-support}
\supp(a)\subset \{ (x,t,y): \text{$|t-1|\le \vr^2$,  $|x_1-1|\lc \vr^2$, $|x'|, |\ox|, |y|\lc \vr^2$} \}.
\Ee
Notice that the nonlinear shear transformation  does not essentially change this support assumption since by the skew-symmetry of the $S_i$ 
we have $|x_1\ux^\intercal S_i e_1|\lc \vr^2$.

 With the choice of \[a(x,t,y)  = \mathring a(\ux, \ox-x_1\ux^\intercal \vec S e_1,t,y), \quad \zeta=\zeta_1, \quad \upsilon=\upsilon_1\] 
 we get for $l<k$
\Be\label{eq:shear}  \fA^{k,l} f(\ux, \ox,t) = \cA^{k,l}_t f(\underline x, \ox+ x_1\underline x^\intercal \vec Se_1, t)\Ee
(and the same with $\zeta=\zeta_0$ if $k=l$). 

We deduce the $L^2$-estimate in Proposition \ref{prop:mainAkl} from the following variant.
\begin{prop} \label{prop:mainAklvar}  Let  $\eps>0$.
  Then there exists a constant $C = C(\eps)>0$ such that 
 \Be\label{est:aklvar}
        \|\cA^{k,l} f\|_{L^2(\bbR^{d+m}\times [\frac 12,2])}\le C 2^{-\frac{k(d-1)}{2}} 2^{l(\frac{d-2}{2}+\eps)} \|f\|_{L^2(\bbR^{d+m})}, \Ee
        with $C$  bounded as $\zeta$, $\upsilon $, $ a$ are varying over bounded subsets of $C^\infty_c$ (with the above support assumptions). 
\end{prop}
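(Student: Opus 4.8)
\textbf{Proof plan for Proposition \ref{prop:mainAklvar}.}

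By the tiling argument over group translations already invoked at the start of this section it suffices to treat $f$ supported in a ball of diameter $\ll\vr^2$, so that $\sK^{k,l}_t$ has the oscillatory representation \eqref{eq:cKklt}. The one genuinely structural step is to apply Plancherel in the central variable $\ox$. In the phase $\Psi$ of \eqref{eq:Psidef} the variables $\ox$, $\oy$, $\overline\theta$ enter only through the pairings $\overline\theta\cdot\ox$ and $\overline\theta\cdot\oy$, while the dependence of the amplitude $a$ on $\ox$ is a fixed $C^\infty_c$ bump at scale $\vr^2$ which, since the relevant central frequencies will be of size $\approx 2^k\gg\vr^{-2}$, can be expanded in a rapidly convergent Fourier series over its support and absorbed. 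Writing $\eta$ for the central frequency and $\widehat f(\cdot,\eta)$ for the partial Fourier transform of $f$ in $\ox$, the $\overline\theta$-integration in \eqref{eq:cKklt} thus collapses to $\overline\theta=2^{1-k}\eta$, the factor $\upsilon$ confines $|\eta|\approx 2^k$, and $\widehat{\cA^{k,l}_tf}(\cdot,\eta)=T^{k,l}_{t,\eta}\widehat f(\cdot,\eta)$ where $T^{k,l}_{t,\eta}$ is the family of oscillatory integral operators on $L^2(\bbR^d_{\ux})$
\[
T^{k,l}_{t,\eta}h(\ux)=c\,2^{k}\iint e^{i\Phi^\eta_t(\ux,\uy,\theta_1)}\,b_{t,\eta}(\ux,\uy,\theta_1)\,h(\uy)\,d\theta_1\,d\uy ,
\]
with $\Phi^\eta_t(\ux,\uy,\theta_1)=2^{k-1}\theta_1\bigl(x_1-y_1-tg(\tfrac{x'-y'}{t})\bigr)+\ux^\intercal S^\eta P^\intercal y'-(\ux^\intercal S^\eta e_1)\,tg(\tfrac{x'-y'}{t})$, $S^\eta:=\sum_{i=1}^m\eta_i S_i$, and $b_{t,\eta}$ a symbol supported where $\theta_1+2^{1-k}\ux^\intercal S^\eta e_1$ lies in a $2^{-l}$-interval. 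Since $f\mapsto\widehat f(\cdot,\eta)$ is (up to a fixed constant) an isometry in $\ox$, Plancherel and Cauchy--Schwarz give
\begin{align*}
\|\cA^{k,l}f\|^2_{L^2(\bbR^{d+m}\times[\frac 12,2])}
&\lc \int_{1/2}^{2}\int_{|\eta|\approx 2^{k}} \bigl\|T^{k,l}_{t,\eta}\bigr\|^2_{L^2(\bbR^d)\to L^2(\bbR^d)}\,\bigl\|\widehat f(\cdot,\eta)\bigr\|^2_{L^2(\bbR^d)}\, d\eta\, dt\\
&\lc \sup_{t,\eta}\bigl\|T^{k,l}_{t,\eta}\bigr\|^2_{L^2\to L^2}\,\|f\|^2_{L^2(\bbR^{d+m})} .
\end{align*}
So the $t$-integration over the compact interval and the central variable are harmless, and everything is reduced to a \emph{uniform} $L^2(\bbR^d)$ operator bound for the family $T^{k,l}_{t,\eta}$.

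The bound needed is that $\bigl\|T^{k,l}_{t,\eta}\bigr\|_{L^2(\bbR^d)\to L^2(\bbR^d)}\lc_\eps 2^{-k(d-1)/2}2^{l(d-2)/2+l\eps}$ uniformly for $t\in[\tfrac 12,2]$ and $|\eta|\approx 2^k$; this is exactly the content of Proposition \ref{prop:Tlal}, stated in \S\ref{sec:propprel} and proved in \S\ref{sec:Tlal}, and granting it the two displays above give \eqref{est:aklvar}. To see where that exponent comes from: carrying out the $\theta_1$-integral (the phase is affine in $\theta_1$, so this produces a bump confining $\ux,\uy$ to a $2^{l-k}$-neighborhood of the surface $\{x_1=g(x')\}$) exhibits $T^{k,l}_{t,\eta}$ as an average over that surface at scale $2^{k-l}$ — which alone would contribute the Euclidean-type decay $2^{-(k-l)(d-1)/2}$ coming from the $d-1$ principal curvatures of $g$ — composed with a Fourier integral twist whose phase carries the non-degenerate bilinear form $\ux^\intercal S^\eta\uy$, of size $\|S^\eta\|\approx 2^k$ (hence $\gg 2^{k-l}$ when $l\ge1$). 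Because the linear independence of the $S_i$ forces $\|S^\eta\|\approx 2^k$ for all $|\eta|\approx 2^k$, see \eqref{eq:lincombofSi}, $S^\eta$ has rank at least $2$, so the twist upgrades at least one of the tangential directions from the weak scale $2^{k-l}$ to the strong scale $2^k$, each such upgrade improving the bound by a factor $2^{-l/2}$; in the extreme case $\rank(S^\eta)=2$ with $e_1$ lying in the range of $S^\eta$ only one direction is upgraded, and $2^{-(k-l)(d-1)/2}\cdot 2^{-l/2}=2^{-k(d-1)/2}2^{l(d-2)/2}$. Without either mechanism a crude Schur estimate gives only $\|T^{k,l}_{t,\eta}\|\lc 1$.

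The real obstacle is thus the rigorous, \emph{uniform} proof of this $L^2$ bound, the delicate point being the possible near-minimal rank and near-$e_1$-aligned range of the twisting matrix $S^\eta$, where the heuristic ``surface $\times$ twist'' splitting is genuinely coupled. I would organize it as in \S\ref{sec:Tlal}: first decompose the direction $\eta/|\eta|$ into small caps on which $S^\eta$ is essentially constant and, on each cap, diagonalize $S^\eta$ so as to single out its ``strong'' directions (the range of $S^\eta$, governed by $2^k$) from the complementary ``weak'' ones (governed by the surface curvature at scale $2^{k-l}$); estimate each localized building block by a rescaled $L^2$ bound for an oscillatory integral operator carrying these two scales; and then reassemble the pieces by an almost-orthogonality (Cotlar--Stein) argument. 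Carrying this through requires, as the excerpt indicates, \emph{two} nested decompositions combined via almost orthogonality, and I expect the control of the cross terms — between caps and between the two decompositions — to be the main technical difficulty; the bound \eqref{eq:lincombofSi} is what keeps $\|S^\eta\|\approx 2^k$ and makes all the separations and orthogonality estimates quantitative, while the harmless factor $2^{l\eps}$ absorbs the number of caps and the logarithmic losses inherent in such a scheme. This two-decomposition-plus-orthogonality construction is the novel ingredient of the paper.
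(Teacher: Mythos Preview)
Your high-level idea---freeze the central frequency via Plancherel in $\ox$ and reduce to a family of oscillatory integral operators on $L^2(\bbR^d)$---is indeed the paper's route, and the Fourier-series expansion of $a$ that you absorb in a half-sentence is exactly the paper's first step in \S\ref{sec:ProofofProp}. But the identification you make with Proposition~\ref{prop:Tlal} is not correct, and the discrepancy is structural.

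You keep $\theta_1$ as an internal integration variable and fix $t$, so your $T^{k,l}_{t,\eta}$ is a map $L^2(\bbR^d_{\uy})\to L^2(\bbR^d_{\ux})$ for which you need a \emph{uniform-in-$t$} bound. The paper does the opposite: it also applies Plancherel in $y_1$ (this is the operator $\cF^{m+1}_{k,\mu}$ in \eqref{eq:Fm+1mudef}), which turns $\theta_1$ into an \emph{input} variable, and it puts $t$ on the \emph{output} side. Proposition~\ref{prop:Tlal} is thus a bound $L^2(\bbR^d_{(\theta_1,y')})\to L^2(\bbR^{d+1}_{(\ux,t)})$ with decay $\la^{-d/2}$, not $\la^{-(d-1)/2}$; the extra half-power of $\la$ is recovered from the Plancherel factor $2^{-k/2}$ in the $y_1$-transform. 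This is not a cosmetic difference: in the proof of Lemma~\ref{lem:almostortho}(i) the almost-orthogonality between boxes separated in the $\theta_1$-direction is obtained by integration by parts in $t$ (see \eqref{eq:phitlower}), and there is no substitute among the $x$-variables because $\partial_{x_1}\phi=y_1-z_1-|Se_1|\inn{y'-z'}{\fe_2}$ can vanish even when $|y_1-z_1|$ is large. So the fixed-$t$ bound you invoke is neither what Proposition~\ref{prop:Tlal} states nor what \S\ref{sec:Tlal} proves, and your reduction as written has a gap at exactly the place where the paper's extra output variable $t$ does real work.

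Your sketch of the \S\ref{sec:Tlal} argument is also off target. There is no cap decomposition in $\eta/|\eta|$ and no diagonalization of $S^\eta$ across caps: $\overline\theta$ (your $\eta$) is simply frozen, and the two nested decompositions are both \emph{spatial} in the input variable $(\theta_1,y')$---first into anisotropic boxes $\fs$ with sidelengths dictated by $|Se_1|$ and $2^{-l}$, then (Lemma~\ref{lem:gsll}) a finer slicing of each $\fs$ in a direction $b$ chosen so that $PSP^\intercal b\perp PSe_1$. The role of \eqref{eq:lincombofSi} is not to keep ``$\|S^\eta\|\approx 2^k$'' but to guarantee, for each frozen $\overline\theta$, that the fixed skew matrix $S^{\overline\theta}$ has norm bounded above and below, so that either $|Se_1|$ is bounded below or $\|PSP^\intercal\|$ is---the dichotomy that drives the construction of the boxes and of $b$.
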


\begin{proof}[Proof that Proposition \ref{prop:mainAklvar} implies Proposition \ref{prop:mainAkl}] By \eqref{eq:shear} Proposition \ref{prop:mainAklvar} immediately implies the first half of \eqref{est:akldakl}, by a change of variable. 
To prove the derivative bound in \eqref{est:akldakl} first observe
\[ \partial_t \Psi(\ux, \ox+x_1\ux^\intercal \vec Se_1,t,y,\theta) = \big(\theta_1+\ux^\intercal S^{\overline\theta} e_1\big) \big( \inn{\tfrac{x'-y'}t}{\nabla g(\tfrac{x'-y'}{t}) }- g(\tfrac{x'-y'}{t})\big)\,.\]
From \eqref{eq:Kklt-alt} we calculate that 
\[\partial_t \fA^{k,l} f(x,t)= \sum_{i=1,2,3} \fA^{k,l,[i]}_t f(x)+ 2^{k-l}  \fA^{k,l,[4]}_t f(x) 
 \]
where the Schwartz kernel of $ \fA^{k,l,[i]}_t$ is given by $K^{k,l,[i]}_t$, defined as in 
\eqref{eq:Kklt-alt} but with $\zeta$, $a$, $\upsilon$ replaced by $\zeta^{[i]}$, $ \mathring a^{[i]}$, $\upsilon^{[i]}$ for $i=1,2,3,4$, resp.,  with the following  definitions
 (for $l<k$) 
 \[ \zeta^{[1]}(s)= s\zeta_1'(s),\quad 
 \zeta^{[2]} (s)= \zeta^{[3]}(s)=\zeta_1(s),\quad 
 \zeta^{[4]}(s) = \frac {is}2\zeta_1(s),
 \]
 \[\upsilon^{[1]}=\upsilon^{[2]}=\upsilon^{[4]} =\upsilon_1,\quad \upsilon^{[3]}= \partial_t \upsilon_1,\] 
 \[\mathring a^{[1]} = t^{-1} \mathring a,  \quad \mathring a^{[2]} = \partial_t \mathring a,\quad 
  \mathring a^{[3]}= \mathring a, \]
  and \[\mathring a^{[4]}(x,t,y)= t^{-1} \mathring a(x,t,y) \big(\inn{\tfrac{x'-y'}{t}}{\nabla g(\tfrac{x'-y'}{t})}-g(\tfrac{x'-y'}{t} )\big).\]

For $l=k$ replace $\zeta_1$ by $\zeta_0$. These formulas show that the derivative bound in \eqref{est:akldakl} follows from Proposition \ref{prop:mainAklvar} as well, as we have  
\[  \fA^{k,l,[i]}_t f(\ux, \ox) = \cA^{k,l,[i]}_t f(\underline x, \ox+ x_1\underline x^\intercal \vec Se_1)\]
where, 
with $a^{[i]}(x,t,y) =\mathring a^{[i]}(\ux, \ox-x_1\ux^\intercal \vec Se_1,t,y) $, the operator 
$\cA^{k,l,[i]}_t$  has Schwartz kernel 
\begin{multline*} 
\sK^{k,l,[i]}_t(x,y)= 2^{(k-1)(1+m)}  a^{[i]}(x,t,y)  \iint e^{i2^{k-1}  \Psi(x,t,y,\theta) }   \times \\ \zeta^{[i]}(2^lt(\theta_1 +  \ux^\intercal S^{\overline \theta} e_1)) \upsilon^{[i]} (t,\theta_1+\ux^\intercal S^{\overline\theta} e_1, \overline\theta)    d\theta_1 d\overline\theta.
\end{multline*}   
Now we can use a change of variables and apply Proposition \ref{prop:mainAklvar} to $\cA^{k,l,[i]}_t$, 
for $i=1,2,3,4$ to complete the proof of Proposition \ref{est:aklvar}. 
\end{proof}

\subsection{A family of  oscillatory integral operators} \label{sec:ProofofProp} 
It remains to prove Proposition \ref{prop:mainAklvar}. We reduce it to a result on oscillatory integrals acting on functions on $\bbR^d$. 
 Here we write, $x=(x_1,x')$, $y=(y_1,y')$ for the vectors in $\bbR^d$, omitting the underbar. In what follows we are given a skew-symmetric $d\times d$ matrix $S$ and assume that its matrix norm satisfies \Be \label{eq:Snormbd} c_0\le \|S\|\le c_0^{-1}\Ee with $0<c_0\le 1$; in particular the rank of $S$ is at least $2$.

We define the phase function $\psi$ by
\Be\label{Psidef}
\psi(x,t,y)= y_1(x_1-tg(\tfrac{x'-y'}{t}) )+x^\intercal S (P^\intercal y'-tg(\tfrac{x'-y'}{t})e_1)
\Ee and set \Be\label{eq:sigmadef} \sigma(x',y_1)= y_1+(x')^\intercal  P  Se_1 .\Ee
The function $\zeta_1$ can be split as $\zeta_1=\zeta_1^++\zeta_1^-$ where $\supp(\zeta_1^+)\subset (\frac 12,2)$ and $\supp(\zeta_1^-)\subset (-2,-\frac 12)$. 

Setting $\la=2^{k-1}$ and letting $l\le k$ we define, for functions $f\in L^2(\bbR^d)$,
\Be \label{Tlaldef} T^{\la,l} f (x,t)= \int e^{i\la \psi(x,t,y)} \chi_l(x,t,y)f(y) dy
\Ee 
where \Be\label{defofchil}
\chi_l(x,t,y)= \begin{cases} \chi(x,t,y)\zeta(2^lt\sigma(x',y_1)), \,\, &l\le k-1
\\
\chi(x,t,y)\zeta_0(2^lt\sigma(x',y_1)),\,\, &  l= k.
\end{cases} 
\Ee
 Here  $\chi$ is  $C^\infty_c$-function supported where  
 $t\approx 1$, $|x'|, |y'|\le \varrho$, and the diameter of $\supp (\chi)$ does not exceed $ \varrho$. For $l\le k-1$ we use the convention for  $\zeta$  to be either  $\zeta_1^+$ or $\zeta_1^-$. Note then that for $l\le k-1$ we have  $|\sigma|\approx 2^{-l}$ on $\supp (\chi_l)$ and in addition the sign of $\sigma$ is the same for all $(x,t,y)$ in the support. 

\begin{prop} \label{prop:Tlal} Suppose $c_0\le \|S\|\le c_0^{-1} $. 
For $\eps>0$,
\Be \| T^{\la,l}f\|_{L^2(\bbR^{d}\times [1/2,2])}\lc C_\eps 2^{l(\frac{d-2}{2}+\eps)}\la^{-\frac{d}{2}} \|f\|_{L^2(\bbR^d)}\,.
\Ee
The constant $C_\eps$ depends on $c_0$ but not on the specific matrix $S$,  and stays bounded if $\zeta_0, \zeta_1^\pm, \chi$ 
range over a bounded set of $C^\infty_c$ functions. 
\end{prop}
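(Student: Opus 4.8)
The plan is to prove Proposition~\ref{prop:Tlal} by a $TT^*$ argument in which the operator $T^{\la,l}$ is split, via two successive decompositions, into elementary building blocks for which one has both a favorable individual $L^2$ bound and almost orthogonality; these are then reassembled by the Cotlar--Stein lemma in the two--Hilbert--space form already recalled in \S\ref{sec:global}. As a preliminary normalization I would first exploit the cutoff $\zeta(2^lt\sigma(x',y_1))$: since $t\approx1$ and $\sigma(x',y_1)=y_1+(x')^\intercal PSe_1$ with $|x'|\le\varrho$, this forces $|\sigma|\approx2^{-l}$ of one fixed sign, so $T^{\la,l}$ vanishes unless $2^{-l}\lesssim\varrho$, and then $y_1$ is confined to an interval of length $\approx2^{-l}$ (whose center may be frozen at the cost of a harmless finite sum). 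The true small parameter in the $y'$ directions is then $\mu:=2^{k-l}=\la2^{-l}\ge1$. I would record the relevant derivatives of the phase \eqref{Psidef}: $\partial_{y_1}\psi=x_1-tg(u)$ with $u=(x'-y')/t$, so $\psi$ is affine in $y_1$; and $\partial_{y'}\psi=\sigma(x',y_1)\nabla g(u)-PSx$, so that $\partial_{y'}^2\psi=-t^{-1}\sigma\nabla^2g(u)$ has all eigenvalues $\approx2^{-l}$, by the positive-definiteness of $g''(0)$. Correspondingly, for the $TT^*$ phase $\Phi=\psi(x,t,y)-\psi(\tilde x,\tilde t,y)$ the $y$-Hessian vanishes to first order as $(\tilde x,\tilde t)\to(x,t)$, which is the source of the degeneracy that forces the decomposition.

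The first decomposition is in the curved variables $y'$ (equivalently, a frequency decomposition): I would split the $y'$-support into plates adapted to the $\nabla^2g(u)$-metric at scale $\approx\mu^{-1/2}$ in the $d-2$ tangential directions (and a suitably longer scale in the remaining, ``normal'', direction), so that on each plate the curvature term $g(u)$ is essentially affine and the plates are frequency-separated along their normals. The second decomposition is in the ``radial'' quantity $|x_1-tg(u)|=|\partial_{y_1}\psi|$, broken into dyadic ranges $\approx2^{j}\mu^{-1}$; this controls the oscillation of the affine $y_1$-integral over its length-$2^{-l}$ interval, and, combined with the $t$-integration, yields the extra $\la^{-1/2}$ gain beyond the fixed-time bound (local smoothing). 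For a fixed building block I would straighten the plate, apply stationary phase in the $d-2$ tangential $y'$ variables using only the curvature of $g$, then integrate the resulting affine integral in $y_1$ and in $t$; the crucial point is that, because the M\'etivier nondegeneracy has been dropped, this tangential stationary-phase step must \emph{not} rely on invertibility of $S$. Each block then obeys an $L^2$ bound whose exponents, after the geometric summation in $j$ (which is lossless) and the $\ell^2$-summation over the $\approx\mu^{(d-1)/2}$ plates coming from orthogonality, combine to exactly $2^{l((d-2)/2+\eps)}\la^{-d/2}$, the $2^{l\eps}$ absorbing the $O(l)$ or $O(\log\la)$ overlap factors in the Cotlar--Stein estimate and any near-diagonal block.

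The almost orthogonality is proved by nonstationary phase: for distinct plates $a\neq a'$ the operators $T_aT_{a'}^*$ (or $T_a^*T_{a'}$) have a $y'$-phase whose gradient is bounded below by the plate separation, giving decay in $|a-a'|$, while for the $j$-decomposition one integrates by parts in $y_1$ on the region where $|x_1-tg(u)|$ is large. The bounded-overlap geometry of the plates in the $\nabla^2g(u)$-metric (again a consequence of $g''(0)>0$) is used here. Throughout, all constants are kept quantitative in terms of $c_0$ (through \eqref{eq:Snormbd}), the $C^N$-norm of $g$, and the $C^N$-bounds on $\zeta_0,\zeta_1^\pm,\chi$, so that the final bound is uniform in $S$ and in the amplitudes, as required.

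The main obstacle I anticipate is precisely the absence of the M\'etivier condition: $S$ may be singular, and in the directions of $\ker S$ (or where $PSe_1$ is small) the twisting term $x^\intercal S(P^\intercal y'-tg(u)e_1)$ contributes no nondegeneracy to the mixed Hessian, so the entire $\la^{-d/2}$ decay must be extracted from the curvature of $g$ together with the one-dimensional $y_1$-integral and the $t$-averaging alone. Making the two decompositions compatible --- so that the tangential stationary phase, the oscillatory affine $y_1$-integral, and the $t$-integration each deliver their precise share of the decay without any being double-counted --- is the delicate bookkeeping, and is where I expect the bulk of the work in \S\ref{sec:Tlal} to lie.
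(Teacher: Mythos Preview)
Your proposal has a genuine gap: you explicitly try to extract the full decay from the curvature of $g$, the affine $y_1$-integral, and the $t$-averaging alone, treating the $S$-term as something that ``contributes no nondegeneracy'' and can be ignored in the decomposition. But the hypothesis $c_0\le\|S\|$ is not decorative; the bound of Proposition~\ref{prop:Tlal} is false for $S=0$ (the phase then collapses to $y_1(x_1-tg(u))$, with no oscillation in the $y'$-directions at all), and your scheme, which never uses $S$, cannot distinguish the two situations. Concretely, the $y'$-Hessian $\partial_{y'}^2\psi=-t^{-1}\sigma\,g''(u)$ has \emph{all} eigenvalues $\approx2^{-l}$, so the degeneracy is isotropic in $y'$; there is no distinguished ``normal'' direction intrinsic to $g$ that would justify a $(d-2)+1$ plate decomposition at scale $\mu^{-1/2}$. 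The ``$d-2$ tangential directions'' in your sketch have no referent in the phase unless one invokes $S$, and your orthogonality mechanism (frequency separation of plates via the $y'$-gradient) yields separation of order $2^{-l}\mu^{-1/2}$, far too small to be useful.

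What the paper actually does is driven by $S$ throughout. One tiles $y$-space into anisotropic boxes $\fs$ whose sidelengths are $(\la^{-1},\,\la^{-1}\delta_l^{-1},\,2^l\la^{-1},\dots,2^l\la^{-1})$ in an orthonormal frame $\fe_1=e_1,\fe_2,\dots,\fe_d$ with $Se_1\in\mathrm{span}(\fe_2)$; here the new parameter $\delta_l=\max\{|Se_1|,2^{-l}\}$ is essential. Cauchy--Schwarz on each $\fs$ gives only $|\fs|^{1/2}\approx\la^{-d/2}2^{l(d-2)/2}\delta_l^{-1/2}$, which is off by $\delta_l^{-1/2}$. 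The missing factor is recovered (Lemma~\ref{lem:gsll}) by a \emph{second} decomposition of each $\fs$ along a unit vector $b\in\mathrm{span}(\vec u_1,\vec u_2)$, where $\vec u_1,\vec u_2$ are the top eigenvectors of $PSP^\intercal$, chosen so that $PSP^\intercal b\perp PSe_1$; integration by parts along $v=PSP^\intercal b/|PSP^\intercal b|$ in $x'$ then gives almost orthogonality, and this step uses $\|PSP^\intercal\|\gtrsim c_0$ (available when $|Se_1|$ is small). The orthogonality between different boxes (Lemma~\ref{lem:almostortho}) is obtained by integration by parts in the \emph{output} variables $(x,t)$ --- in $t$ for $y_1$-separation, in $x_1$ for $\fe_2$-separation when $|Se_1|\ge2^{-l}$, and along $(y'-z')/|y'-z'|$ in $x'$ otherwise, exploiting the skew-symmetry identity $(y'-z')^\intercal PSP^\intercal(y'-z')=0$ to isolate the $\sigma g''$ contribution. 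None of this structure appears in your sketch, and without it the numerology does not close.
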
 
For $l\gg 1$ this is the key technical result of this paper; see \S\ref{sec:Tlal}.

\subsection{Reduction of Proposition \ref{prop:mainAklvar} to oscillatory integral operators} We will use Proposition \ref{prop:Tlal} to deduce Proposition \ref{prop:mainAklvar}. The estimate is more straightforward if $a$ can be written as a tensor product of functions of each of the variables $x_i, t, y_j$. 
To reduce to this situation we choose functions $x_i\mapsto \alpha_i(x_i)$, $t\mapsto \ga(t)$, $y_j\mapsto \beta_j(y_j)$, $1\le i,j \le d+m$,
all with compact support such that \[\breve a(x,t,y):=\ga(t)\prod_{i=1}^{d+m}\alpha _i(x_i)\prod_{j=1}^{d+m} \beta_j(y_j) \]  equals $1$ on $\supp(a)$, so that the support of each factor is contained in an interval of length less than $2\pi$. 

On the support of $\breve a$ we have the following  Fourier series expansion
\begin{align*}
a(x,t,y) =\sum_{ (n,\nu,\mu)\in \bbZ\times\bbZ^{d+m}\times \bbZ^{d+m}} c_{n,\nu,\mu} e^{itn  } \prod_{i=1}^{d+m} e^{ix_i\nu_i} \prod_{j=1}^{d+m} e^{iy_j\mu_j} 
\end{align*}
where the coefficients $c_{n,\nu,\mu} $ are rapidly decreasing.
This yields a decomposition \Be \label{eq:Fourierdec} \cA^{k,l} f(x,t)= \sum_{n,\nu,\mu} c_{n,\nu,\mu} e^{itn  } \prod_{i=1}^{d+m} e^{ix_i\nu_i}  \cA_{\mu}^{k,l} f(x,t)\Ee where $\cA_{\mu}^{k,l}$ is factorized  as a composition of three operators,
\Be \label{eq:factorization}\cA^{k,l}_\mu  f(x,t)= 2^{(k-1)(m+1)} \cF_k^{m} \cG^{k,l} \cF_{k,\mu}^{m+1} f(x,t);\Ee
here $\cF_k^m$ is defined on functions $(\ux,\overline\theta,t)\mapsto G(\ux,\overline\theta,t) $ by
\Be\label{eq:Fmdef} \cF_k^m G(\ux,\ox,t)= \prod_{i=d+1}^{d+m} \alpha_i(x_i)\int_{\bbR^{m} } G(\ux,\overline \theta,t) e^{i2^{k-1} \inn {\ox} {\overline\theta} }  d\overline\theta,
\Ee
$\cG^{k,l} $ is defined on functions $(\theta_1, y', \overline \theta)\mapsto F(\theta_1,y',\overline \theta) $ by
\begin{multline} \label{eq:cGkldef}
\cG^{k,l} F(\ux, \overline \theta,t)= \gamma(t) \prod_{i=1}^d \alpha_i(x_i) \int_{\theta_1,y'} 
e^{i2^{k-1} \psi^{\overline\theta} (x_1,x',t,\theta_1,y')} \\ \times\,
\zeta_1(2^lt(\theta_1 +  \ux^\intercal S^{\overline \theta} e_1))
\upsilon(t,\theta_1,\overline\theta)
\prod_{j=2}^d \beta_j(y_j) F(\theta_1,y',\overline\theta) 
d\theta_1dy' 
\end{multline}
with 
\Be\label{eq:psithetadef} 
\psi^{\overline\theta} (x_1, x', t, \theta_1, y')= 
\theta_1(x_1-tg(\tfrac{x'-y'}{t})) + \ux^\intercal S^{\overline\theta} (P^\intercal y'-tg(\tfrac{x'-y'}{t})e_1),
\Ee
and finally $\cF_{k,\mu}^{m+1}$ is defined on functions $(y_1,y', \overline y)\mapsto f(y_1,y',\overline y)$ by
\begin{multline} \label{eq:Fm+1mudef}
\cF_{k,\mu}^{m+1} f(\theta_1,y',\overline\theta) \\=\int e^{-i2^{k-1} (y_1\theta_1+\inn{\oy}{\overline\theta})} e^{i\inn{\mu}{y} } \beta_1(y_1) \prod_{j=d+1}^{d+m}  \beta_j(y_j) f(y_1,y', \overline y) dy_1d\overline y.
\end{multline}
We have the estimates
\begin{align} \label{cFmest}&\| \cF^m_k G\|_{L^2(\bbR^{d+m+1})} \lc 2^{-k m/2} \|G\|_{L^2(\bbR^{d+m+1})} 
\\
\label{cGklest}
&\| \cG^{k,l} F\|_{L^2(\bbR^{d+m+1})} \le C_\eps 2^{l( \frac{d-2}{2}+\eps)} 2^{-k d/2} \|F\|_{L^2(\bbR^{d+m})}
\\
&\label{cFm+1est}\|\cF^{m+1}_{k,\mu} f\|_{L^2(\bbR^{d+m})} \lc 2^{-k(m+1)/2} \|f\|_{L^2(\bbR^{d+m})}
\end{align}
and clearly the desired estimate \eqref{est:aklvar}  follows from 
\eqref{cFmest}, \eqref{cGklest}, \eqref{cFm+1est} in conjunction with 
\eqref{eq:Fourierdec}, \eqref{eq:factorization} and the rapid decay of the $c_{n,\mu,\nu}$.

We  justify the $L^2$ estimates. \eqref{cFmest} is an immediate consequence of Plancherel's theorem in $\bbR^m$ and likewise \eqref{cFm+1est} is a consequence of Plancherel's theorem in $\bbR^{m+1}$. It remains to consider \eqref{cGklest}; here we rely on Proposition \ref{prop:Tlal}.
With $\psi^{\overline\theta}$ as in \eqref{eq:psithetadef} define for functions 
$(\theta_1,y')\mapsto g(\theta_1,y')$ 
\begin{multline}  \cT_{\overline \theta}^{\la,l} g(\ux,t)= \\
\int_{\theta_1,y'} \exp({i\la \psi^{\overline\theta}(\ux,t,\theta_1,y')})  \chi^{\overline\theta}
(\ux,t,\theta_1,y') \zeta_1(2^lt \sigma^{\overline\theta}  (x', \theta_1) ) g(\theta_1,y') d\theta_1dy' 
\end{multline}
where $\sigma^{\overline\theta}  (x',\theta_1) = \theta_1+\ux^\intercal S^{\overline\theta} e_1 = \theta_1+ (x')^\intercal P S^{\overline\theta} e_1$; moreover
\[\chi^{\overline\theta}(\ux,t, \theta_1,y')= \gamma (t) \upsilon(t,\theta_1,\overline{\theta})\prod_{i=1}^{d} \alpha_i(x_i)\prod_{j=2}^d \beta_j(y_j) \,.
\]
By Proposition \ref{prop:Tlal} we have with  $\la\approx 2^k$
\Be  
\label{eq:cTlalL2}
\| \cT_{\overline \theta}^{\la,l} g\|_{L^2(\bbR^{d+1})} \lc 2^{l(\frac{d-2}{2}+\eps)} 2^{-kd/2} \|g\|_{L^2(\bbR^d)}\Ee 
uniformly in $\overline\theta$; 
note that we have exactly the setup in \eqref{Tlaldef}, except there we use the notation  $x$ for $\ux$, $y_1$ for $\theta_1$, and $S$ for $S^{\overline\theta}$. For the estimate \eqref{eq:cTlalL2} the uniformity assertion in Proposition \ref{prop:Tlal} is crucial and so is the assumption of the $S_i$ being linearly independent and therefore satisfying \eqref{eq:lincombofSi}. 
We have $\cG^{k,l} F(x,\overline\theta,t)_= \cT^{2^{k-1},l}_{\overline\theta}[ F(\cdot,\overline \theta)]$ and thus applying \eqref{eq:cTlalL2} gives \eqref{cGklest}. This covers the case $l<k$, and the case $l=k$ is analogous, requiring a minor notational modification. 
This finishes the proof of Proposition \ref{prop:mainAklvar}, given Proposition \ref{prop:Tlal}.  

\section{Proof of Proposition \ref{prop:Tlal}}\label{sec:Tlal}
For small $l$ we shall rely on a standard  $T^*T$ argument from \cite{Hormander1973}. 
The main part of the proof  concerns the case of large $l$; here we  rely on almost orthogonality arguments based  on the Cotlar-Stein lemma, in the following version. Consider a finite set $\sV$ indexing  bounded operators $T_\nu:H_1\to H_2$ where $H_1$, $H_2$ are Hilbert spaces. Then we have the following bound for the operator norm of the sum:
\Be \label{eq:CotlarStein}\Big\|\sum_{\nu \in \sV} T_\nu\Big\|_{H_1\to H_2} \lc  \sup_{\nu}  
 \sum_{\nu'} \|  T_\nu^* T_{\nu'}\|_{H_1\to H_1} ^{1/2} +\sup_{\nu} \sum_{\nu'} \|  T_\nu T^*_{\nu'}\|_{H_2\to H_2} ^{1/2}.
\Ee
This well known version  follows by a simple modification of the proof in \cite[ch. VII.2]{Stein-harmonic} (\cf. also \cite[p.223]{comech-survey}). 

\subsection{The case of small $l$} \label{sec:small-l} 
This is the regime where one can use a standard $T^*T$ argument (cf. 
\cite{Hormander1973}). 
Recall that $g(0) = 1$, $\nabla g(0) = 0$, $\diam(\supp(\chi)) \le \vr\ll 1$, in particular $|x'|, |y'|\le \varrho\ll1 $ for $(x,t,y)\in \supp(\chi)$.  
Denote as before \Be \label{eq:newsigma} \sigma = \sigma(x',y_1) = y_1 + (x')^\intercal PS e_1.
\Ee
Let the $(d+1)\times d$ matrix $\partial_y^\intercal\partial_{x,t} \psi$
be defined by 
$(\partial_y^\intercal\partial_{x,t} \psi)_{i,j}= \partial_{x_i} \partial_{ y_j}\psi$ for $1\le i,j\le d$ and 
$(\partial_y^\intercal\partial_{x,t} \psi)_{d+1,j}= \partial_{t} \partial_{y_j}\psi$ for $1\le j\le d$. One calculates (\cite{RoosSeegerSrivastava-imrn})
\begin{multline}\label{eq:mixhessian}
\partial_y^\intercal \partial_{x,t}  \psi\big|_{(x,t,y)}=\\
     \begin{pmatrix}
    1 & e_1^\intercal S P^\intercal \\
    -g'(\frac{x'-y'}{t}) & t^{-1}\sigma(x',y_1) g''(\frac{x'-y'}{t}) + PS P^\intercal + PSe_1(g'(\frac{x'-y'}{t}))^\intercal \\
    -1+\wt g(x,t,y) & -t^{-2}\sigma (x',y_1)(x'-y')^\intercal g''(\frac{x'-y'}{t})
    \end{pmatrix}
\end{multline}
where 
$
\wt g(x,t,y) := 1-g(\frac{x'-y'}{t})+t^{-1}g'(\frac{x'-y'}{t})(x'-y').
$

Using
\eqref{eq:mixhessian} we obtain for the determinant of the $d\times d$ submatrix $\partial_y^\intercal \partial_{x}  \psi$
\[
\det(\partial_y^\intercal \partial_x \psi(x,t,y)) = \det\big(t^{-1}\sigma (x',y_1)g''(\tfrac{x'-y'}{t}) + PS P^\intercal\big) + O(\vr).
\]
From \cite[Lemma 5.3]{MuellerSeeger2004}, it follows that the matrix $t^{-1}\sigma g''(\frac{x'-y'}{t}) + PS P^\intercal$ is invertible. This says that $\partial_y^\intercal \partial_x \psi(x,t,y) $ is invertible for all $(x,t,y) \in \supp(\chi)$. Also, the derivatives of the amplitude 
$\chi (x,t,y)\zeta(2^l t\sigma(x',y_1))$ are bounded when $2^l\approx 1$.  
Thus the standard oscillatory integral theorem from  \cite{Hormander1973} applies and we may conclude the bound 
$\|T^{\la,l} f(\cdot, t) \|_2\le C(l)  \la^{-d/2}\|f\|_2$  which one uses for $2^l\lc \vr^{-1}$.


\subsection{The case of large $l$}
\label{sec:Se1large}  We may assume  that $2^{-l}\ll\vr\ll 1$
(recall from  the beginning of \S\ref{sec:dyadic} the  specifications of the parameter $\vr$).
Choose an orthonormal basis $\fe_1,\dots, \fe_d$ with $\fe_1=e_1$, and  $Se_1 \in \text{span}(\fe_2)$.
Set 
\Be\label{deltadef} \delta_l=\max\{|Se_1|, 2^{-l}\} \Ee
To prepare for almost orthogonality arguments we tile $\bbR^d$ into boxes  with  sidelengths  $(2^{l\eps/d}\la^{-1}, 2^{l\eps/d} \la^{-1} \delta_l^{-1}, 2^{l(1+\eps/d)}\la^{-1}, \dots, 2^{l(1+\eps/d)}\la^{-1} )$, with the sides parallel to the $\fe_1,\fe_2, \fe_3,  \dots, \fe_d$.
The family of boxes   $\fs$ can be parametrized by 
$ \bbZ^d$; we define the lower corners by $ c(\fz)= 2^{l\eps/d} \big( \la^{-1} \fz_1 \fe_1 + \la^{-1}\delta_l^{-1} \fz_2 \fe_2+\sum_{i=3}^d 2^l\la^{-1}\fz_i \fe_i\big)$, 
and let 
\[\fs(\fz)= \{y: \inn{c(\fz_1,\dots, \fz_d) }{\fe_i}  \le \inn{y}{\fe_i}<   \inn{c(\fz_1+1,\dots, \fz_d+1) }{\fe_i} ,\, i=1,\dots, d\}.\]
We also write $\fc_\fs=c(\fz)$  if $\fs=\fs(\fz)$.
Denote by $\fS$ the (finite) family of those boxes which intersect $\{y:(x,t,y)\in \supp (\chi) \text{ for some } (x,t)\} $.
We then decompose 
\Be T^{\la,l}= \sum_{\fs\in \fS} T^{\la,l}_\fs, \text{ with } T^{\la,l}_\fs[f]= T^{\la,l}[f\bbone_{\fs}].
\Ee Note that 
\Be \label{eq:spatialorth} T^{\la,l}_\fs (T^{\la,l}_{\fs'})^*=0 \text{ if } \fs\neq \fs'.
\Ee

Notice that  we have 
$|T^{\la,l}_\fs f(x,t)|\lc |\fs|^{1/2} \|f\|_2$.
Because of the compact  support of the kernel in the $(x,t)$ variable we see that the $L^2$ operator norm   $\|T^{\la,l}_\fs\|_{2\to 2}$ is $O(|\fs|^{1/2}).$
 It is crucial for our analysis that this can be improved by a factor of $\delta_l^{1/2}$:
\begin{lem}\label{lem:gsll}
    There exists a constant $C>0$ independent of $\fs\in\fS$ such that the estimate
    \begin{align*}
        \big\|T^{\lambda,l}_\fs f(\cdot, t) \big\|_{L^2(\bbR^d)} \le C\lambda^{-\frac d2}2^{l(\frac{d-2}{2}+\eps)}  \|f\|_{L^2(\bbR^d)}
    \end{align*}
    holds for every $\fs\in\fS$, with $C$ independent of $t\in [\frac 14,4]$ and $\fs$.
\end{lem}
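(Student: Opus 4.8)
The plan is to fix $t$ and $\fs$ and argue directly for the single-slice operator $T := T^{\lambda,l}_\fs(\cdot,t)\colon L^2(\bbR^d)\to L^2(\bbR^d)$, whose Schwartz kernel is $e^{i\lambda\psi(x,t,y)}\chi_l(x,t,y)\bbone_\fs(y)$. First I would dispose of the trivial regime: since $\chi_l$ is bounded and the kernel has fixed compact support in $x$, $\|T\|_{2\to 2}\lesssim|\fs|^{1/2}$, and this already yields the asserted bound whenever $\delta_l$ is bounded below — in particular whenever $|Se_1|\ge c$ — so from now on $|Se_1|$, and hence $\delta_l$, may be assumed small. As $|\fs|^{1/2}=\lambda^{-d/2}\delta_l^{-1/2}2^{l(d-2)/2+l\eps/2}$, the entire content of the lemma is then to improve the trivial bound by the factor $\delta_l^{1/2}$ (with an allowed extra loss of $2^{l\eps/2}$).

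The saving comes from a $T^*T$ argument, whose kernel at $(y,\tilde y)\in\fs\times\fs$ is $\int_{\bbR^d}e^{i\lambda[\psi(x,t,y)-\psi(x,t,\tilde y)]}\chi_l(x,t,y)\overline{\chi_l(x,t,\tilde y)}\,dx$, estimated by non-stationary phase in $x$ using the mixed Hessian formula \eqref{eq:mixhessian}. Two features of that formula are used. First, $\psi$ is affine in $x_1$ and the cutoff $\zeta(2^lt\sigma(x',y_1))$ has no $x_1$-dependence, while $\partial_{y_1}\partial_{x_1}\psi=1$; so integration by parts in $x_1$ is clean and produces decay in the quantity $\lambda|(y_1-\tilde y_1)-\langle Se_1,y-\tilde y\rangle|$, i.e. a gain whenever $y$ and $\tilde y$ are suitably separated in the $\fe_1$- and $Se_1$-directions. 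Second — and this is where \eqref{eq:lincombofSi} enters — although the full $d\times d$ mixed Hessian is degenerate to leading order (from \eqref{eq:mixhessian} its determinant is $O(\varrho+\delta_l)$, reflecting the identity $\langle v,Sv\rangle=0$), its degeneracy is essentially one-dimensional, concentrated along $\fe_2$; transverse to $\fe_2$ the matrix is nondegenerate with inverse of size $O(1)$. Consequently, splitting $y-\tilde y$ into its $\fe_2$-part and its transverse part and integrating by parts in the corresponding $x$-directions yields decay in $\lambda$ times the transverse separation; combined with the $x_1$-integration by parts this handles all separations except a thin $\fe_2$-slab, and Schur's test over $\fs$ (whose sides transverse to $\fe_2$ all have length $\ge\lambda^{-1}$) reduces matters to the $\fe_2$-direction.

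In the $\fe_2$-direction $\fs$ has the wide extent $\approx\lambda^{-1}\delta_l^{-1}$, and here the extra factor $\delta_l^{1/2}$ beyond the nondegenerate count is supplied by the cutoff: since $\sigma(x',y_1)=y_1+(x')^\intercal PSe_1$ with $\partial_{x'}\sigma=PSe_1$ of size $|Se_1|\lesssim\delta_l$, the constraint $|\sigma|\approx2^{-l}$ pins the relevant $x'$-variable to a slab of measure $\approx2^{-l}\delta_l^{-1}$ once $y_1$ is confined to a $\lambda^{-1}$-window; restricting the $x$-integration in the $T^*T$ kernel to this slab produces the missing $\delta_l^{1/2}$ after taking square roots. (When $\delta_l=2^{-l}$, so $Se_1$ is essentially zero and $\fe_2$ may instead be chosen along a direction realizing $\|S\|\ge c_0$ inside $e_1^\perp$, the transverse nondegeneracy already covers all of $\fe_2,\dots,\fe_d$ and no slab is needed.)

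The step I expect to be the main obstacle is making the transverse-nondegeneracy argument quantitative over the \emph{entire} $\fe_2$-extent of $\fs$: the first-order (affine) approximation of the phase used above is valid only on $\lambda^{-1}$-scales in $\fe_2$, whereas $\fs$ is $\delta_l^{-1}$ times wider, and moreover the $x'$-derivatives of $\zeta(2^lt\sigma)$ are as large as $O(2^l\delta_l)$, so a single integration by parts does not suffice uniformly. The remedy is a second, finer decomposition: partition the $\fe_2$-side of $\fs$ into $O(\delta_l^{-1})$ subintervals of length $\approx\lambda^{-1}$, on each of which $\lambda\psi$ is affine up to $O(1)$, obtaining pieces $T_j$; then $T_jT_{j'}^*=0$ for $j\ne j'$ by disjointness of supports, while for the mixed products $T_j^*T_{j'}$ the separation of the subintervals drives $T_j$ and $T_{j'}$ apart in $x'$-frequency (again via \eqref{eq:mixhessian}), giving summable off-diagonal decay; the Cotlar–Stein lemma \eqref{eq:CotlarStein} then reassembles $T=\sum_j T_j$ with only the harmless $2^{l\eps}$ loss. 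Carrying out this reassembly while correctly accounting for the $\zeta$-cutoff is the technical heart of the argument.
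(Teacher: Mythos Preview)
Your outline---trivial Cauchy--Schwarz bound, then a secondary decomposition of $\fs$ combined with Cotlar--Stein to recover the missing factor $\delta_l^{1/2}$---matches the paper's architecture, but the choice of slicing direction is where the argument breaks. You slice along $\fe_2$ (the direction of $Se_1$) and claim that for $T_j^*T_{j'}$ the $\fe_2$-separation of $y'$ and $z'$ forces $x'$-frequency separation via \eqref{eq:mixhessian}. The dominant $x'$-term there is $PSP^\intercal(y'-z')$, and nothing prevents $P\fe_2$ from lying in $\ker PSP^\intercal$: take $d=4$ and $S$ block-diagonal with a small block $\bigl(\begin{smallmatrix}0&a\\-a&0\end{smallmatrix}\bigr)$ on $\mathrm{span}(e_1,e_2)$ and a unit block on $\mathrm{span}(e_3,e_4)$; then $\fe_2=e_2$ and $PSP^\intercal(P\fe_2)=0$. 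In that situation the only surviving phase gradient coming from the $\fe_2$-separation is the $\sigma g''$ term, of size $2^{-l}|j-j'|\la^{-1}$, which is too small to make $\sum_{j'}\|T_j^*T_{j'}\|^{1/2}$ converge over the $O(\delta_l^{-1})$ pieces. Your supporting claim that ``transverse to $\fe_2$ the matrix is nondegenerate with inverse of size $O(1)$'' is likewise false once $\dim\ker PSP^\intercal>1$ (which happens already for $d=5$ when $\mathrm{rank}\,PSP^\intercal=2$); and the slab-in-$x'$ mechanism contributes a factor $2^{-l}\delta_l^{-1}$ to the $T^*T$ kernel, not $\delta_l$, so by itself it does not supply $\delta_l^{1/2}$ after Schur.

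The paper's fix is to slice $\fs$ not along $\fe_2$ but along a unit vector $b\in\mathrm{span}(\vec u_1,\vec u_2)$ in the top singular plane of $PSP^\intercal$, chosen with the additional constraint $PSP^\intercal b\perp PSe_1$. This guarantees two things at once: first, $|PSP^\intercal b|=s_1\gtrsim c_0$, so the directional derivative along $v=PSP^\intercal b/|PSP^\intercal b|$ has main phase term $s_1\inn{y'-z'}{b}\approx|n-n'|\la^{-1}2^{l\eps/d}$; second, $\inn{v}{\partial_{x'}}\sigma=\inn{v}{PSe_1}=0$, so the cutoff $\zeta(2^lt\sigma)$ is invisible to every integration by parts in this direction and produces no $2^l$ losses. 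With these two properties the off-diagonal bound follows by straightforward repeated integration by parts, and since each slice $\fr_n(\fs)$ automatically has volume $\lesssim 2^{l(d-2+\eps)}\la^{-d}$ (the factor $\delta_l^{-1}$ disappears because $\delta_l^{-1}\le 2^l$), the diagonal term already meets the target by Cauchy--Schwarz alone---no slab argument is needed.
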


\begin{proof}[Proof of Lemma \ref{lem:gsll}]
We freeze $t\in[\frac 14,4]$  for this proof and write
$\cT^{\la,l}_\fs f(x)= T_\fs^{\la,l} f(x,t)$, all estimates will be uniform in $t$.

 We have $|\fs|\lc 2^{l(d-2+\eps)} \delta_l^{-1} \la^{-d} $ and therefore obtain from the Cauchy-Schwarz inequality 
\[\|\cT_\fs^{\la,l}\|_{L^2\to L^2} \lc 2^{l(d-2+\eps)/2} \delta_l^{-1/2} \la^{-d/2} .\]
Let $c_1\ll c_0$ be a small constant, and the displayed estimate is already sufficient if $|Se_1|\ge c_1$.
In what follows we consider the case $|Se_1|\le c_1$. We note that in this case 
\Be\label{b:pspint}
    \sup_{w'\in\R^{d-1},  |w'|=1 } |PSP^\intercal w'| \ge c_0/2.
\Ee
Indeed,  write $w=(w_1,w')$ and 
$Sw = \big(e_1^\intercal SP^\intercal w', w_1 PS e_1 + PSP^\intercal w'\big)$; we have $|e_1^\intercal SP^\intercal w'|+|w_1 PS e_1|\lc c_1|w| $ with 
$c_1\ll c_0$ and \eqref{b:pspint} holds by  \eqref{eq:Snormbd}.

 Let $d_\circ$ be the smallest integer greater than or equal to $(d-1)/2$. Since $PSP^\intercal$ is skew-symmetric, there exists nonnegative numbers $s_1\ge \dots \ge s_{d_\circ}$ and orthonormal vectors $\vec u_1,\dots \vec u_{d-1} \in \bbR^{d-1}$ such that
 \begin{subequations} \label{eq:ONB-proj}
\begin{align}
    \begin{aligned}
        &PSP^\intercal \vec u_{2i-1} = s_i \vec u_{2i},   \\
       &PSP^\intercal \vec u_{2i} = - s_i \vec u_{2i-1}, 
    \end{aligned} \quad 
\end{align}
 for $1\le i\le d_\circ$ if $d-1$ is even,  and
\begin{align}
    \begin{aligned}
        &PSP^\intercal \vec u_{2i-1} = s_i \vec u_{2i},   \\
       &PSP^\intercal \vec u_{2i} = - s_i \vec u_{2i-1}, 
    \end{aligned} \quad  PSP^\intercal \vec u_{2d_\circ-1} = 0
\end{align}
\end{subequations}
 for $1\le i\le d_\circ -1$, if $d-1$ is odd.
By \eqref{b:pspint}, we have $s_1\gc c_0$.

To estimate $\mathcal T_\fs^{\lambda, l}f$, we further decompose the slab $\fs$ into smaller pieces.  
We may write $PSe_1=\sum_{i=1}^{d-1} \alpha_i \vec u_i$ and let $ b= \beta_1\vec u_1+\beta_2\vec u_2$ where 
$\beta_1^2+\beta_2^2=1$ and $\alpha_2\beta_1-\alpha_1\beta_2=0$. Then $b$ is a unit vector in $\text{span}(\vec u_1,\vec u_2)$ with the property that
$PSP^\intercal b=-\beta_2s_1\vec u_1+\beta_1s_1\vec u_2$ is perpendicular to $PSe_1$. For later use notice that
$|PSP^\intercal b| =s_1.$

We now decompose $\fs$ into subsets $\fr_n(\fs)$ defined for $n\in \bbZ$ by 
\Be\label{eq:rnsdef}  \fr_n(\fs)=\{y=(y_1,y')\in \fs: 2^{l\eps/d}\la^{-1}n\le \inn{b}{y'} < 2^{l\eps/d}\la^{-1} (n+1)\}.\Ee
Define $\cT^{\la,l}_{\fs,n} f= \cT^{\la,l}_\fs[f\bbone_{\fr_n(\fs)}]$
so that 
$\cT^{\la,l}_\fs= \sum_n \cT^{\la,l}_{\fs,n}. $
As $\inn{P^\intercal b}{e_1}=0$   we have \[ |\fr_n(\fs)|\lc  2^{l(d-2+\eps)} \la^{-d} , \]
and by the Cauchy-Schwarz inequality we get
\Be \|\cT^{\la,l}_{\fs,n}\|_{L^2\to L^2}  
\lc 2^{l(d-2+\eps)/2}\la^{-d/2}.\Ee Since in view of the disjointness of the sets $\fr_n(\fs)$ we have 
$\cT^{\la,l}_{\fs, n} (\cT^{\la,l}_{\fs,n'})^*=0$ for $n\neq n'$ it suffices,  by the Cotlar-Stein Lemma,  to show 
\Be\label{eq:Tsn-orth} \| (\cT^{\la,l}_{\fs, n} )^* \cT^{\la,l}_{\fs,n'}\|_{L^2\to L^2} 
\lc 2^{l(d-2+\eps)}  \la^{-d} |n-n'|^{-N} \quad \text{ if } |n-n'|\ge C_1
\Ee for some large $C_1$. 

We now assume that  $y\in \fr_n(\fs)$, $z\in \fr_{n'}(\fs)$;  since both $y,z$ belong to $\fs$ this means that $|n-n'|\le C2^l$.
The Schwartz kernel of $(\cT^{\la,l}_{\fs,n})^*\cT^{\la,l}_{\fs,n'}$ is given by 
\Be\label{eq:Hnn'}H_{n,n'}(y,z)= \bbone_{\fr_n(\fs)}(y) \bbone_{\fr_{n'}(\fs)} \int e^{-i\la\phi(x,t,y,z)} \overline{\chi_l(x,t,y)}\chi_l(x,t,y) \,dx \Ee
where
\Be \label{eq:phidef} \phi(x,t,y,z)= \psi(x,t,y)-\psi(x,t,z).\Ee 
The argument will rely on an integration by parts using the directional derivative
\Be \label{eq-dirderivb}\inn{v}{\partial_{x'}} = \sum_{i=2}^d v_i \frac{\partial}{\partial x_i} \text{ with } v= \frac{PSP^\intercal b}{|PSP^\intercal b|}.\Ee
Note that 
\begin{align}  \label{eq:vderphi}
\inn{v}{\partial_{x'}}\phi(x,t,y,z)= \sum_{i=2}^d v_i\int_0^1 \partial_y^\intercal \partial_{x_i} \psi(x,t, w^\tau(y,z)) \,d\tau\, (y-z)&
\\
\label{eq:wtau} \text{ where }  w^\tau\equiv w^\tau(y,z):=(1-\tau)y+\tau z.&
\end{align}

Using \eqref{eq:mixhessian}, we write
\begin{multline}
    \label{eq:mixedHessianx'} \partial_y^\intercal \partial_{x'}\psi\big|_{(x,t,w^\tau)}  (y-z) = \inn{y'-z'}{b}PSP^\intercal b  + PSP^\intercal \Pi_{b^\perp}(y'-z')\,+ \\
     t^{-1}\sigma (x',w_1^\tau)g''(\tfrac{x'-{w^\tau}'}{t})(y'-z') - g'(\tfrac{x'-{w^\tau}'}{t})(y_1-z_1)  + PSe_1 (g'(\tfrac{x'-{w^\tau}'}{t}))^\intercal (y'-z').
\end{multline}
Since  $PSP^\intercal b$ and thus $v$ is perpendicular to both $PSe_1$, $PSP^\intercal \Pi_{b^\perp}(y'-z')$, and $|PSP^\intercal b| = s_1$ we have 
\begin{multline}
\label{eq:psibsec}
    \partial_y^\intercal \inn{v}{\partial_{x'}}\psi\big|_{(x,t,w^\tau)} (y-z) = s_1\inn{y'-z'}{b} +\\(ts_1)^{-1}\sigma(x', w_1^\tau) (PSP^\intercal b)^\intercal g''(\tfrac{x'-{w^{\tau}}'}{t})(y'-z')  - s_1^{-1}(y_1-z_1)(PSP^\intercal b)^\intercal g'(\tfrac{x'-{w^{\tau}}'}{t}).
\end{multline} 
Notice from \eqref{eq:newsigma} that 
\Be \label{eq:sigma-taudec}
\sigma(x',(1-\tau)y_1+\tau z_1)=(1-\tau)\sigma(x',y_1)+\tau\sigma(x',z_1). \Ee
Thus if
$\chi_l(x,t,y)\neq 0$ and $\chi_l(x,t,z)\neq 0$  then $\sigma (x', w_1^\tau(y,z)) =O( 2^{-l})$. 
Since $y,z \in \fs$, we also have 
    $|y_1-z_1|\lesssim \lambda^{-1}2^{l\eps/d} $ and $ |y'-z'|\lesssim \lambda^{-1}2^{(1+\eps/d)l}.$ 
Hence 
the expression in the second line of display \eqref{eq:psibsec} is $O(\la^{-1} 2^{l\eps/d}).$
Finally  $|\inn{y'-z'}{b}|\approx |n-n'| \lambda^{-1} 2^{l\eps/d}$  because $(y,z)\in \fr_n(\fs)\times \fr_{n'} (\fs)$. Thus,  we may use these observations in  \eqref{eq:vderphi}, \eqref{eq:psibsec} to conclude that
\Be\label{eq:dirderivphilowerbd}
\big|\inn{v}{\partial_{x'}} \phi(x,t,y,z)\big|\gc |n-n'| \la^{-1} 2^{l\eps/d}, \quad  \text{ if $|n-n'|\ge C_1$ }
\Ee
for a large constant $C_1$.
This lower bound allows us to integrate by parts in the integral \eqref{eq:Hnn'}.

Let $\cL$ be the formal adjoint  of 
$g\mapsto (-\inn{v}{\partial_{x'} }\phi)^{-1} \inn{v}{\partial_{x'}} g$, i.e. \[\cL g= \inn{v}{\partial_{x'}}\big( \frac{g}{ \inn{v}{\partial_{x'}} \phi} \big) 
=\frac{\inn{v}{\partial_{x'}} g}{ \inn{v}{\partial_{x'}} \phi} - \frac {g \inn{v}{\partial_{x'}}^2 \phi }
{ (\inn{v}{\partial_{x'} }\phi)^2}\,.\] Setting  
\Be\label{eq:etaldef} \eta_l(x,t,y,z):=
\overline{\chi_l(x,t,y)}\chi_l(x,t,z)
\Ee we have 
\Be\label{eq:Hnn'mod}H_{n,n'}(y,z)= \bbone_{\fr_n(\fs)}(y) \bbone_{\fr_{n'}(\fs)}(z)   \int e^{i\la\phi(x,t,y,z)} \frac{\cL^N\eta_l(x,t,y,z)}{ (-i\la)^N}\,dx.  \Ee

In order to estimate $\cL^N \eta_l$ we first observe that because $v$ and $PSe_1$ are perpendicular we have $\inn{v}{\partial_{x'}} \sigma(x',y_1)\equiv 0$ and  $\inn{v}{\partial_{x'} }\partial_{y} \sigma(x',y_1)\equiv 0$.
This implies that  the functions $\inn{v}{\partial_{x'}}^j\partial_{y_i} \psi(x,t,w^\tau) $, $2\le i\le d$, $j\ge 2$,   
belong to ideal generated by $\sigma(x',y_1)$, a quantity which is $O(2^{-l})$. This in turn implies that for $(x,t,y,z)\in \supp(\eta_l)$, $y,z\in \fs$ 
\[|\inn {v}{\partial_{x'}}^j \phi(x,t,y,z)|  \lc |y_1-z_1|+2^{-l} |y'-z'|\lc 2^{l\eps/d}\la^{-1}.\]
A straightforward calculation together with \eqref{eq:dirderivphilowerbd} shows  
\[ |\cL^N \eta_l(x,t,y,z) | \lc \la^N (2^{l\eps/d}|n-n'|)^{-N} \text{ for $y\in \fr_n(\fs)$, $z\in \fr_{n'}(\fs)$} \] and from \eqref{eq:Hnn'mod} 
we get \[\sup_z\int|H_{n,n'}(y,z) |\,dy+\sup_y\int |H_{n,n'}(y,z) |dz \lc 2^{l(d-2 +\eps -N\eps/d)} \la^{-d} |n-n'|^{-N} \]
for $|n-n'|\ge C_1$. Hence we get \eqref{eq:Tsn-orth} by Schur's test.
\end{proof}

In order to finish the proof of Proposition \ref{prop:Tlal}   using Lemma \ref{lem:gsll} and \eqref{eq:CotlarStein},  it remains to   show that the operator norms of $(T^{\la,l}_\fs)^* T^{\la,l}_{\fs'} $ are small if $\fs$, $\fs'$ are far apart.  In order to quantify this we decompose the set of pairs $(\fs, \fs')$ in families $\cU_{\ka_1,\ka_2,\ka_3}$ with $\ka_i\in \{0,1,2,\dots\}$
which we now define. 
For $\fs\in\fS$, we write 
$c_\fs^i= \inn{c_\fs}{\fe_i},$ $i=1,2$, $c_\fs^\perp = \Pi_{\text{span}(\fe_1, \fe_2)^\perp}=\sum_{k=3} ^dc_\fs^k \fe_k$.

Let $\ka_1, \ka_2, \ka_3\in \bbN_0\equiv\{0,1,2,\dots\}$  such that 
$2^{\ka_i}\le  4\la$.  To parse the following definition note  that $2\floor{2^{\ka-1}}= 2^\ka$ if $\ka\in \bbN$ and  $2\floor{2^{\ka-1}}= 0$  if $\ka=0$.
We  define $\mathcal U_{\ka_1,\ka_2,\ka_3} $ as the set of pairs $(\fs,\fs')\in \fS\times\fS$ such that 
\begin{subequations} \label{eq:UKdef}
\begin{align}
   &\quad 2\floor{ 2^{\ka_1-1} }\lambda^{-1}\le 2^{-l\eps/d}|c_\fs^1 - c_{\fs'}^1|\le 2^{\ka_1+1}\lambda^{-1},\\
    &\quad 2\floor{ 2^{\ka_2-1} }2^{\ka_1} \lambda^{-1} \delta_l^{-1}\le 2^{-l\eps/d}|c_\fs^2-c_{\fs'}^2|<2^{\ka_2+\ka_1+1} \delta_l^{-1} \lambda^{-1}, \\
    &\quad 2\floor{ 2^{\ka_3-1}} 2^{\ka_2+\ka_1} \lambda^{-1}2^l\le 2^{-l\eps/d}|c_\fs^\perp - c_{\fs'}^\perp|\le2^{\ka_3+\ka_2+\ka_1+1} \lambda^{-1}2^l\,.
\end{align} \end{subequations}  
We let $\cU^\fs_{\ka_1,\ka_2,\ka_3}=\{\fs'\in \fS: (\fs,\fs')\in \cU_{\ka_1,\ka_2,\ka_3}\}$. 
It is easy to see that for every $\fs\in \fS$
\[ \fS= \bigcup_{\ka_1,\ka_2,\ka_3\ge 0} \cU_{\ka_1,\ka_2,\ka_3}^\fs.\]  When all $\ka_i$ are small we can use Lemma \ref{lem:gsll}. The following lemma gives improved bounds if at least one of $\ka_1,\ka_2,\ka_3$ is large.

\begin{lem} \label{lem:almostortho} For $\ka_1,\ka_2,\ka_3\in \bbN_0$, $(\fs,\fs')\in \fS\times\fS$ we have the following estimates:


(i) If $\ka_1\ge 5$, $\ka_2, \ka_3\le 10 $  and $(\fs,\fs')\in \cU_{\ka_1,\ka_2,\ka_3}$ then for all $N>0$
\Be \label{eq:ka1large}
\|(T^{\la,l}_\fs)^* T^{\la,l}_{\fs'}\|_{L^2\to L^2} \lc_N 2^{-(\frac{l\eps}d+\ka_1)N } 2^{l(d-2+\eps)} \delta_l^{-1} \la^{-d} \,.
\Ee

(ii) If $\ka_2\ge 5$, $\ka_3\le 10$ and $(\fs,\fs')\in \cU_{\ka_1,\ka_2,\ka_3}$ then for all $N>0$
\Be \label{eq:ka2large} \|(T^{\la,l}_\fs)^* T^{\la,l}_{\fs'}\|_{L^2\to L^2} \lc_N  2^{-(\frac{l\eps}d+\ka_1+\ka_2)N } 2^{l(d-2+\eps)} \delta_l^{-1} \la^{-d}\,.
\Ee

(iii) If $\ka_3\ge 5$ and $(\fs,\fs')\in \cU_{\ka_1,\ka_2,\ka_3}$ then for all $N>0$
\Be \label{eq:ka3large} \|(T^{\la,l}_\fs)^* T^{\la,l}_{\fs'}\|_{L^2\to L^2} \lc_N  2^{-(\frac{l\eps}d+\ka_1+\ka_2+\ka_3)N } 2^{l(d-2+\eps)} \delta_l^{-1} \la^{-d}\,.
\Ee
\end{lem}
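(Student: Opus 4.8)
The plan is to prove all three estimates by iterated non-stationary phase (integration by parts) in the $x$-integral defining the Schwartz kernel of $(T^{\la,l}_\fs)^*T^{\la,l}_{\fs'}$, using three different directional derivatives adapted to the three scales built into the tiling. Recall that the kernel is
\[
H_{\fs,\fs'}(y,z)=\bbone_\fs(y)\bbone_{\fs'}(z)\int e^{-i\la\phi(x,t,y,z)}\,\overline{\chi_l(x,t,y)}\chi_l(x,t,z)\,dx,
\]
with $\phi(x,t,y,z)=\psi(x,t,y)-\psi(x,t,z)$, and that by the mean value theorem
$\partial_x\phi(x,t,y,z)=\int_0^1\partial_y^\intercal\partial_x\psi(x,t,w^\tau)\,d\tau\,(y-z)$, with $w^\tau=(1-\tau)y+\tau z$. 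The point is that $y-z$ is controlled by the geometry of $\cU_{\ka_1,\ka_2,\ka_3}$: its $\fe_1$-component has size $\approx 2^{\ka_1+l\eps/d}\la^{-1}$, its $\fe_2$-component has size $\approx 2^{\ka_1+\ka_2+l\eps/d}\delta_l^{-1}\la^{-1}$, and its component in $\mathrm{span}(\fe_1,\fe_2)^\perp$ has size $\approx 2^{\ka_1+\ka_2+\ka_3+l\eps/d}2^l\la^{-1}$ (whichever $\ka_i$ is the large one). After $N$ integrations by parts one gains $(\la|\partial_x\phi\cdot v|)^{-N}$ for an appropriate unit vector $v$, and the claim is that this yields the stated decay, after using Schur's test together with the volume bound $|\fs|\lc 2^{l(d-2+\eps)}\delta_l^{-1}\la^{-d}$ and $\|(T^{\la,l}_\fs)^*T^{\la,l}_{\fs'}\|\le \|T^{\la,l}_\fs\|\,\|T^{\la,l}_{\fs'}\|\lc |\fs|$ as the trivial bound to multiply against.

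For part (i) ($\ka_1$ large, $\ka_2,\ka_3$ bounded) I would differentiate in $x_1$. From the first row of the mixed Hessian \eqref{eq:mixhessian}, $\partial_{x_1}\partial_y\psi=(1,e_1^\intercal SP^\intercal)$ is a constant vector, so $\partial_{x_1}\phi=\langle(1,e_1^\intercal SP^\intercal),y-z\rangle=(y_1-z_1)+e_1^\intercal SP^\intercal(y'-z')$. On $\supp(\eta_l)$ we have $\sigma=O(2^{-l})$, hence $y_1+(x')^\intercal PSe_1=O(2^{-l})$ and $z_1+(x')^\intercal PSe_1=O(2^{-l})$, so $y_1-z_1=O(2^{-l})$; on the other hand $|e_1^\intercal SP^\intercal(y'-z')|\le |Se_1|\,|y'-z'|\lc\delta_l 2^{\ka_1+\ka_2+l\eps/d}\delta_l^{-1}2^l\la^{-1}=2^{\ka_1+\ka_2+l\eps/d}2^l\la^{-1}$ — wait, that is too large, so in fact one cannot simply use $\partial_{x_1}$ here and must instead replace $y_1$ by $-(x')^\intercal PSe_1$ in all the phase computations, i.e.\ work with $\sigma$-adapted coordinates so that the true $\fe_1$-separation $|c_\fs^1-c_{\fs'}^1|\approx 2^{\ka_1+l\eps/d}\la^{-1}$ survives. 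This is exactly the role of the weird sidelength $2^{l\eps/d}\la^{-1}$ in the $\fe_1$-direction: it is \emph{below} the uncertainty scale $2^{-l}$ only after the gain $2^{-l}$ from $\sigma$, so the net oscillation $\la\cdot 2^{\ka_1+l\eps/d}\la^{-1}=2^{\ka_1+l\eps/d}\gg 1$ gives each integration by parts a factor $2^{-\ka_1-l\eps/d}$, while the amplitude derivatives cost only $O(1)$ because $\chi_l$ is a fixed bump at scale $1$ in $x_1$. That yields \eqref{eq:ka1large}.

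For part (ii) ($\ka_2$ large, $\ka_3$ bounded) I would differentiate in the direction $v=PSP^\intercal b/|PSP^\intercal b|$ exactly as in the proof of Lemma \ref{lem:gsll}; there $\partial_y^\intercal\langle v,\partial_{x'}\rangle\psi\cdot(y-z)=s_1\langle y'-z',b\rangle+O(\sigma\cdot|y'-z'|)+O(|y_1-z_1|)$, and since $b$ is (up to harmless corrections) the $\fe_2$-direction in the $PSP^\intercal$-normal-form basis, $|\langle y'-z',b\rangle|\approx|c_\fs^2-c_{\fs'}^2|\approx 2^{\ka_1+\ka_2+l\eps/d}\delta_l^{-1}\la^{-1}$; using $s_1\gc c_0$ and $\delta_l\ge|Se_1|$ (so that $|Se_1|\delta_l^{-1}\le 1$ absorbs the cross terms) the oscillation is $\gc\la\cdot c_0\cdot 2^{\ka_1+\ka_2+l\eps/d}\delta_l^{-1}\la^{-1}\cdot\delta_l\gtrsim 2^{\ka_1+\ka_2+l\eps/d}$ after accounting for the Jacobian of $v$ acting on $\fe_2$; I need to check that $\langle v,\partial_{x'}\rangle^j\phi$ for $j\ge2$ stays in the ideal generated by $\sigma$ (hence $O(2^{-l})$-controlled) just as in Lemma \ref{lem:gsll}, so that higher amplitude derivatives are harmless, and then $N$-fold integration by parts plus Schur gives \eqref{eq:ka2large}. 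For part (iii) ($\ka_3$ large) I would differentiate in a unit vector $w\in\mathrm{span}(\fe_1,\fe_2)^\perp\cap(\text{appropriate }\fe_i)$; here the leading term of $\partial_y^\intercal\langle w,\partial_{x'}\rangle\psi\cdot(y-z)$ is $(PSP^\intercal)^\intercal$ applied to the perpendicular component of $y'-z'$ which has size $\approx|c_\fs^\perp-c_{\fs'}^\perp|\approx 2^{\ka_1+\ka_2+\ka_3+l\eps/d}2^l\la^{-1}$, giving oscillation $\gc 2^{\ka_1+\ka_2+\ka_3+l\eps/d}$ (the extra $2^l$ cancels the $2^{-l}$ one may lose from the $g''$-term), so $N$ integrations by parts give \eqref{eq:ka3large}.

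\textbf{Main obstacle.} The delicate point throughout is bookkeeping the interplay between the three anisotropic scales and the two small quantities $\sigma=O(2^{-l})$ and $|Se_1|\le\delta_l$: one must choose in each case a differentiation direction for which (a) the \emph{linear} term $\partial_y^\intercal(\text{direction}\cdot\partial_{x'})\psi\cdot(y-z)$ is bounded below by the separation of the relevant coordinate of $c_\fs,c_{\fs'}$ times a constant depending only on $c_0$, with all cross terms (coming from $g'$, $g''$, $PSe_1$) genuinely smaller, and (b) \emph{all higher} directional derivatives of $\phi$ are $O(2^{l\eps/d}\la^{-1})$, i.e.\ lie in the $\sigma$-ideal, so that $\cL^N\eta_l$ is bounded by $\la^N$ times the reciprocal of the oscillation to the $N$. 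Establishing (b) for the $\ka_2$- and $\ka_3$-directions requires the same "$\langle v,\partial_{x'}\rangle\sigma\equiv0$" type identities as in Lemma \ref{lem:gsll}, and verifying that the chosen $w$ for part (iii) also annihilates $\sigma$ (or nearly so). Once these structural facts are in place, the rest is a routine integration-by-parts-plus-Schur argument, and summation over $\ka_1,\ka_2,\ka_3$ together with Lemma \ref{lem:gsll} and \eqref{eq:CotlarStein} finishes Proposition \ref{prop:Tlal}.
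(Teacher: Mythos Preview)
Your overall strategy (iterated integration by parts in the $(x,t)$-integral defining $\sK_{\fs,\fs'}$, followed by Schur's test with the volume bound $|\fs|\lc 2^{l(d-2+\eps)}\delta_l^{-1}\la^{-d}$) is exactly right, but in each of the three cases you have chosen the wrong differentiation direction, and the arguments as written do not close.

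\textbf{Part (i).} You attempt $\partial_{x_1}$, correctly compute $\partial_{x_1}\phi=(y_1-z_1)-|Se_1|\langle y-z,\fe_2\rangle$, and then notice a problem. The problem is real: with $\ka_2\le 10$ the second term is $\lc 2^{\ka_1+11}\la^{-1}2^{l\eps/d}$, comparable to the first, so no lower bound follows. Your ``$\sigma$-adapted coordinates'' fix is not a fix. The paper instead integrates by parts in $t$ (remember $T^{\la,l}$ maps into $L^2(\bbR^d\times[1/2,2])$, so the kernel integral is over $(x,t)$, not just $x$). From the bottom row of \eqref{eq:mixhessian}, $\partial_t\phi=-(1+O(\vr))(y_1-z_1)+O(\vr\cdot 2^{-l}|y'-z'|)$, and since $\ka_2,\ka_3\le 10$ the error is $O(\vr)$ times the main term; higher $t$-derivatives behave the same way because $\sigma$ is $t$-independent, and $\partial_t^N\eta_l=O(1)$ since $\partial_t(2^lt\sigma)=2^l\sigma=O(1)$.

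\textbf{Part (ii).} You reuse the direction $v=PSP^\intercal b/|PSP^\intercal b|$ from Lemma~\ref{lem:gsll} and assert that $b$ is ``up to harmless corrections the $\fe_2$-direction.'' This is false: $\fe_2$ is defined by $Se_1\in\mathrm{span}(\fe_2)$, whereas $b\in\mathrm{span}(\vec u_1,\vec u_2)$ is chosen so that $PSP^\intercal b\perp PSe_1$; there is no relation between $b$ and $\fe_2$ in general, so $\langle y'-z',b\rangle$ does not see the $\fe_2$-separation. The paper splits according to the size of $|Se_1|$. When $|Se_1|\ge 2^{-l}$ (so $\delta_l=|Se_1|$) one uses $\partial_{x_1}$: now $|Se_1||\langle y-z,\fe_2\rangle|\approx 2^{\ka_1+\ka_2}\la^{-1}2^{l\eps/d}$ dominates $|y_1-z_1|\lc 2^{\ka_1+1}\la^{-1}2^{l\eps/d}$ because $\ka_2\ge 5$, and $\partial_{x_1}^N\phi=0$ for $N\ge 2$. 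When $|Se_1|\le 2^{-l}$ the paper uses the direction $\tfrac{y'-z'}{|y'-z'|}$ described next.

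\textbf{Part (iii).} You try a direction $w\in\mathrm{span}(\fe_1,\fe_2)^\perp$ and claim the $PSP^\intercal$-term gives the lower bound. But $PSP^\intercal$ may have arbitrarily small (or zero) action on $(y'-z')^\perp$; nothing in the hypotheses prevents this. The paper instead differentiates in the direction $\tfrac{y'-z'}{|y'-z'|}$, for which the $PSP^\intercal$-term \emph{vanishes} by skew-symmetry, leaving
\[
\Big\langle\tfrac{y'-z'}{|y'-z'|},\partial_{x'}\Big\rangle\phi \,=\, \int_0^1\frac{\sigma(x',w_1^\tau)}{t}\,d\tau\cdot\frac{(y'-z')^\intercal g''(0)(y'-z')}{|y'-z'|}\,+\,(\text{errors}).
\]
The main term is $\gc 2^{-l}|y'-z'|\approx 2^{\ka_1+\ka_2+\ka_3}\la^{-1}2^{l\eps/d}$ because $g''(0)$ is positive definite and $\sigma(x',w_1^\tau)$ is single-signed (same sign for $y_1$ and $z_1$ on $\supp\eta_l$ when $l<k$). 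The errors $R_1,R_2$ are $O(\vr)$ times the main term after checking $|Se_1||\langle y'-z',\fe_2\rangle|\lc 2^{\ka_1+\ka_2}\la^{-1}2^{l\eps/d}$. For higher derivatives one needs $\langle\tfrac{y'-z'}{|y'-z'|},\partial_{x'}\rangle\sigma=\langle\tfrac{y'-z'}{|y'-z'|},PSe_1\rangle=O(2^{-l})$, which follows from $|Se_1|\le\delta_l$ together with the bound on $|\langle y'-z',\fe_2\rangle|/|y'-z'|$. This same direction also handles part~(ii) when $|Se_1|\le 2^{-l}$.

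In short: the three derivatives that actually work are $\partial_t$ for (i), $\partial_{x_1}$ for (ii) when $|Se_1|\ge 2^{-l}$, and $\langle\tfrac{y'-z'}{|y'-z'|},\partial_{x'}\rangle$ for (iii) and for (ii) when $|Se_1|\le 2^{-l}$. The mechanism in the last case---killing the skew-symmetric term to expose the single-signed $\sigma g''$ term---is the crucial idea you are missing.
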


Lemma \ref{lem:almostortho} will be proved in \S\ref{sec:almostortho}. In each case, we will analyze for $y\in \fs$ and $z\in\fs'$ the size of the Schwartz kernel $\sK_{\fs,\fs'}\equiv \sK_{\fs,\fs'}^{\la,l}$  of 
 $(T^{\la,l}_\fs)^* T^{\la,l}_{\fs'} $ given by  
\Be\label{eq:Kss'}\sK_{\fs,\fs'} (y,z)= \bbone_{\fs}(y) \bbone_{\fs'}(z) \int e^{i\la\phi(x,t,y,z)} \eta_l(x,t,y,z) 
\,dx dt \Ee
with $\eta_l$ as in \eqref{eq:etaldef}. Note that  whenever $l\le k-1$ the definition  \eqref{eq:etaldef} of $\eta_l$ via \eqref{defofchil} implies that $\sigma(x',y_1)$ and $\sigma(x',z_1)$ have the same sign,  and absolute value comparable to $2^{-l}$.
Our  proof will then rely  on various integration by parts in the integral \eqref{eq:Kss'}. Specifically for $(\fs,\fs')\in \cU_{\ka_1,\ka_2,\ka_3}$ we use 
integration by parts with respect to $t$, when $\ka_1\ge 5$, $\ka_2,\kappa_3\le 10$, integration by parts with respect to $x_1$, when  $|Se_1|\ge 2^{-l}$ and $\kappa_2\ge 5$, $\kappa_3\le 10$, and integration by parts using the directional derivative $\inn{\frac{y'-z'}{|y'-z'|}}{\partial_{x'}} $,   either when $\ka_3\ge 5$ or when  
      $\ka_2\ge 5$,  $\ka_3\le 10$,  $|Se_1|\le 2^{-l}$ (see \S\ref{sec:almostortho} below). 
Assuming Lemma \ref{lem:almostortho}  we can now give the 
\begin{proof}[Proof of Proposition \ref{prop:Tlal}]
We verify \eqref{eq:CotlarStein}. In view of \eqref{eq:spatialorth} it suffices to prove for each $\fs$ 
\Be\label{eq:sumT*T} \sum_{\fs'} \|  (T^{\la,l}_\fs)^* T^{\la,l}_{\fs'}\| ^{1/2}
\lc 2^{l (\frac{d-2+\eps}2)}\la^{-d/2} \Ee
with implicit constant independent of $\fs$. 
From  the definition of $\cU_{\ka_1,\ka_2,\ka_3} $ 
it is  easy to see that
\Be\label{eq:U-cardinality}
\sup_\fs \#(\cU_{\ka_1,\ka_2,\ka_3}^\fs)\lc 
2^{\ka_1d+\ka_2(d-1)+\ka_3(d-2)}.
\Ee
From Lemma \ref{lem:gsll} 
we have \[\|(T^{\la,l}_\fs)^* T^{\la,l}_{\fs'}\| \lc 
\|T^{\la,l}_\fs\| \| T^{\la,l}_{\fs'}\|
\lc 2^{l(d-2+\eps)}\la^{-d} \]  and thus by  \eqref{eq:U-cardinality} for $\ka_i\le 10$, $i=1,2,3$ we have 
\Be\label{eq:U111}\sup_\fs \sum_{\ka_1,\ka_2,\ka_3\le 10} \sum_{\fs'\in \cU_{\ka_1,\ka_2,\ka_3}^\fs } \|(T^{\la,l}_\fs)^* T^{\la,l}_{\fs'}\|^{1/2} \lc 2^{l(d-2+\eps)/2}\la^{-d/2}. \Ee
Moreover using that $\delta_l^{-1}\le 2^l$ we obtain from 
Lemma \ref{lem:almostortho} and \eqref{eq:U-cardinality} 
\begin{multline*}
\sup_\fs \sum_{\max\{\ka_1,\ka_2,\ka_3\}\ge 5}  \sum_{\fs'\in \cU_{\ka_1,\ka_2,\ka_3}^\fs } \|(T^{\la,l}_\fs)^* T^{\la,l}_{\fs'}\|^{1/2} \\ \lc 2^{l\frac{d-2+\eps}2}\la^{-\frac d2} 2^{l(\frac {1} 2- \frac{\eps N}{2d} )} \sum_{(\ka_1,\ka_2,\ka_3)\in \bbN_0^3} 2^{-(\ka_1+\ka_2+\ka_3) (\frac N2-d)}.
\end{multline*} 
For $N>2d$ we can sum in $\ka_1,\ka_2,\ka_3$, and if in addition  we also choose $N>1+d/\eps$ we get the  bound $O(2^{l\frac{d-2+\eps}2}\la^{-\frac d2} )$ for the last display and \eqref{eq:sumT*T}  follows.
\end{proof}

\subsection{Proof of Lemma \ref{lem:almostortho}}\label{sec:almostortho} We verify first 
\eqref{eq:ka1large}, then \eqref{eq:ka2large} in the case $|Se_1|\ge 2^{-l}$ and then give a unified treatment 
of \eqref{eq:ka3large} and  the case $|Se_1|\le 2^{-l}$ in \eqref{eq:ka2large}.
\subsubsection*{Proof of \eqref{eq:ka1large}} We are now in the case $\ka_1\ge 5$, and $\ka_2,\ka_3\le 10$   in \eqref{eq:UKdef}.

We  examine  the Schwartz kernel $\sK_{\fs,\fs'}$ of $(T^{\la,l}_{\fs})^*T^{\la,l}_{\fs'}$ given in  \eqref{eq:Kss'}; in the case under consideration we have
$|y_1-z_1|\approx 2^{\ka_1}\la^{-1} 2^{l\eps/d}$, $|\inn{y-z}{\fe_2}| \lc \delta_l^{-1} \la^{-1}2^{\ka_1}  2^{l\eps/d}  $, $|\inn{y-z}{\fe_i} |\lc \la^{-1}2^{\ka_1} 2^{l(1+\eps/d)},$ $i=3,\dots, d$.
We now integrate  by parts with respect to $t$; for this observe  that (with $w^\tau$ as in \eqref{eq:wtau})
\Be\label{eq:tphiexpr}\begin{aligned}
&\partial_t\phi(x,t,y,z)=\partial_t\psi(x,t,y)-\partial_t\psi(x,t,z)
   = -(y_1-z_1) +\\ &\int_0^1 \big[ \widetilde g(x,t,w^\tau) (y_1-z_1)-t^{-2}\sigma(x',w_1^\tau) (x'-{w^\tau}')^\intercal g''(\tfrac{x'-{w^\tau}' } {t})(y'-z') \big]d\tau.
\end{aligned}
\Ee
Since $|x'-y'|\le\vr\ll 1$ we have $|\widetilde g(x,t,w^\tau)|\ll 1$, and from, 
\eqref{eq:tphiexpr} 
and $|\sigma|\lc 2^{-l}$ we see that 
\Be\label{eq:phitlower} |\partial_{t}\phi(x,t,y,z)| \approx |y_1-z_1|\approx  2^{\ka_1} \la^{-1} 2^{l\eps/d} \,.
\Ee

Observe that the higher $t$-derivatives of $\widetilde g$ are $\lc \vr\ll1$. Moreover  $\sigma$ does not depend on $t$ and we see that
\begin{align*} 
&|\partial^N_t\!\phi(x,t,y,z)|\lc_N |y_1-z_1|+ 2^{-l} |y'-z'| \lc 2^{\ka_1} 2^{l\eps/d}\la^{-1},
\\
&|\partial_t^N [\eta_l(x,t,y,z)]|\lc_N 1.
\end{align*}
Hence integration by parts with respect to $t$ yields  the pointwise bound
$|\sK_{\fs,\fs'}(y,z)|\lc (2^{\ka_1} 2^{l\eps/d})^{-N}$ which gives
\[ \sup_y\int |\sK_{\fs,\fs'} (y,z) |dz+ \sup_z \int|\sK_{\fs,\fs'} (y,z)| dy  \lc_N
\frac{\la^{-d} 2^{l(d-2+\eps)}\delta_l^{-1} }{( 2^{\ka_1} 2^{l\eps/d})^{N} }.
\] As  $\delta_l^{-1}\le 2^l$ we obtain   \eqref{eq:ka1large}, by Schur's test.

\subsubsection*{Proof of \eqref{eq:ka2large} in the case $|Se_1|\ge 2^{-l}$}
This now concerns the case $\ka_2\ge 5$.
We will integrate by parts with respect to $x_1$ in \eqref{eq:Kss'} and observe 
\Be \notag 
\begin{aligned} 
    &\partial_{x_1}\phi(x,t,y,z)= \partial_{x_1}\psi(x,t,y)-\partial_{x_1}\psi(x,t,z)\\
   &= y_1-z_1+ e_1^\intercal S P^\intercal (y'-z')
    = y_1-z_1-|Se_1| \inn{y'-z'}{\fe_2}.
\end{aligned}\Ee
In the present case $|Se_1|=\delta_l$ and   $(\fs,\fs')\in \cU_{\ka_1,\ka_2,\ka_3}$ with $\ka_2\ge 5$ and thus for $y\in \fs$, $z\in \fs'$
\[   |\inn{y-z}{\fe_2}| \ge 2^{\ka_2-1+\ka_1}\la^{-1} \delta_l^{-1} 2^{l\eps/d}, \qquad   |y_1-z_1| \le 2^{\ka_1+1}\la^{-1} 2^{l\eps/d};\]
hence 
\Be \label{eq:x1phi}
|\partial_{x_1}\phi(x,t,y,z)| \approx |Se_1||\inn{y'-z'}{\fe_2}|\approx 2^{\ka_1+\ka_2}\la^{-1}  2^{l\eps /d} .
\Ee
Note that $\sigma$ does not depend on $x_1$ and  
$\partial^N_{x_1}\phi=0$ for $N\ge 2$. After $N$-fold integration by parts with respect to  $x_1$ we get
$|\sK_{\fs,\fs'}(y,z) |\lc (2^{\ka_1+\ka_2} 2^{l\eps/d})^{-N}$. As above,  the 
asserted estimate \eqref{eq:ka2large} follows by Schur's test. \qed

\subsubsection*{Proof 
of \eqref{eq:ka2large} in the case $|Se_1|\le 2^{-l}$ and proof of  \eqref{eq:ka3large}}  Notice that in view of the small support of $\chi$  we have in the present case $\sK_{\fs, \fs'}^{\la,l}=0$ when  $2^l\la^{-1}\ge 1$, so the case $l=k$ is trivial. In what follows we assume $l\le k-1$;  it will be crucial that in this case 
$\sigma(x',y_1)$, $\sigma(x',z_1) $ have the same sign for $y\in \fs$ and $z\in \fs'$.

If $|Se_1|\le 2^{-l}$ we have $\delta_l=2^{-l} $ and for the proof of 
\eqref{eq:ka2large} we have also $\ka_3\le 10$ and 
we shall prove the pointwise estimate 
\Be \label{eq:ptwboundfirstcase} |\sK_{\fs,\fs'}(y,z)| \lc_N (2^{\ka_1+\ka_2} 2^{l\eps/d})^{-N} \Ee under the assumption  that $y\in \fs$, $z\in \fs'$ satisfy 
\Be \label{eq:assumpfirstcase}
\begin{aligned} &2^{\ka_1+\ka_2-1}\la^{-1} 2^l\le 2^{-l\eps/d} |\inn{y-z}{\fe_2} |\le 2^{\ka_1+\ka_2+2}\la^{-1} 2^l ,
\\
&2^{-l\eps/d} |(y-z)^\perp | \lc 2^{\ka_1+\ka_2+11} \la^{-1} 2^l, \quad |Se_1|\le 2^{-l} . 
\end{aligned}
\Ee 
here $(y-z)^\perp: =\sum_{i=3}^d \inn{y-z}{\fe_i}\fe_i $.

Moreover for \eqref{eq:ka3large} we shall  prove \Be \label{eq:ptwboundsecondcase} |\sK_{\fs,\fs'} (y,z)|\lc_N (2^{\ka_1+\ka_2+\ka_3} 2^{l\eps/d})^{-N} \Ee  under the assumption that $\ka_3\ge 5$ and that $y\in \fs$, $z\in \fs'$ satisfy 
\Be \label{eq:assumpsecondcase}
\begin{aligned} 
&2^{\ka_1+\ka_2+\ka_3-1}\la^{-1} 2^l\le 2^{-l\eps/d} |(y-z)^\perp  |\le 2^{\ka_1+\ka_2+\ka_3+2}\la^{-1} 2^l,
\\
&2^{-l\eps/d} |\inn{y-z}{\fe_2}| \le 2^{\ka_1+\ka_2+2}\delta_l^{-1} \la^{-1}.
\end{aligned}
\Ee 


We  use the directional derivative $\inn{\frac{y'-z'}{|y'-z'|}}{\partial_{x'}}$ in our integration by parts argument.
From \eqref{eq:mixhessian} we get (with $w^{\tau}$ as in \eqref{eq:wtau}) 
\begin{align}\label{eq:phix'expr}
    \partial_{x'}\phi(x,t,y,z) &= \int_0^1 \partial_{y} ^\intercal \partial_{x'}  \psi (x,t, w^\tau) (y-z)d\tau 
    \\ \notag= \int_0^1\Big[ &-g'(\tfrac{x'-{w^\tau}'}{t}) (y_1-z_1) +\frac{\sigma(x',w_1^\tau)}t g''(\tfrac{x'-{w^\tau}'} {t}) (y'-z') \\\notag &+ PSP^\intercal (y'-z') + PSe_1  (g'(\tfrac{x'-{w^\tau}'} {t})^\intercal(y'-z')) \Big]  \, d\tau.
\end{align} Take the scalar product with $\frac{y'-z'}{|y'-z'|}$ and use  that 
 $(y'-z')^\intercal PSP^\intercal (y'-z') =0$  to  get 
\Be\label{eq:lowerbd=dec}
\begin{aligned} 
\inn {\tfrac{y'-z'}{|y'-z'|}} {\partial_{x'}} \phi(x,t,y,z)\,=\,&\int_0^1 
\frac{\sigma(x',w_1^\tau) }{t} 
d\tau \,\frac{ (y'-z')^\intercal g''(0) (y'-z') }{|y'-z'|}
\\
&+R_1(x,t,y,z)+R_2(x,t,y,z)
\end{aligned}
\Ee
where 
\[ R_1(x,t,y,z)=\big ( \tfrac{y'-z'}{|y'-z'|})^\intercal \int_0^1 \frac{\sigma(x',w_1^\tau) }{t}
\big( g''(\tfrac{x'-{w^\tau}'} {t}) - g''(0)\big) d\tau (y'-z')
\] and 
\begin{align*} &R_2(x,t,y,z) = 
\\&\int_0^1\Big[  -\inn{ \tfrac{y'-z'}{|y'-z'|}}{g'(\tfrac{x'-{w^\tau}'}{t}) }(y_1-z_1) + 
\inn{ \tfrac{y'-z'}{|y'-z'|}}{PSe_1} (g'(\tfrac{x'-{w^\tau}'} {t})^\intercal(y'-z') )\Big] d\tau
\\
&= -\big(y_1-z_1- |Se_1| \inn{y'-z'}{\fe_2}\big)  \int_0^1 g'(\tfrac{x'-{w^\tau}'} {t})^\intercal \big ( \tfrac{y'-z'}{|y'-z'|} \big) d\tau.
\end{align*}
  
By the single-signedness of $\sigma$ we have  $|\int_0^1 t^{-1} \sigma(x',w_1^\tau) 
d\tau |\gc 2^{-l} $; here we use \eqref{eq:sigma-taudec}. 
Hence,   because of the positive definiteness of $g''(0)$ we see that the main term in  \eqref{eq:lowerbd=dec} satisfies the lower bound 
 \[\Big|\int_0^1 
\frac{\sigma(x',w_1^\tau) }{t} 
d\tau \,\frac{ (y'-z')^\intercal g''(0) (y'-z') }{|y'-z'|}\Big| \gc 2^{-l} |y'-z'|
\]
and we use 
\Be \notag
\begin{aligned}
&2^{-l}|y'-z'|\approx 2^{-l} |\inn{y-z}{\fe_2}| \approx 2^{l\eps/d}2^{\ka_1+\ka_2}  \la^{-1}\,\,\,\,\,\,\,\,\text{ if \eqref{eq:assumpfirstcase} holds},
\\
&2^{-l}|y'-z'|\approx  2^{-l} |(y-z)^\perp| \approx 2^{l\eps/d}2^{\ka_1+\ka_2+\ka_3} \la^{-1} \,\, \,\,\text{ if \eqref{eq:assumpsecondcase} holds}.
\end{aligned}
\Ee 

Since $\|g''(\tfrac{x'-{w^\tau}'} {t}) - g''(0)\|=O(\vr)$ we get 
\[|R_1(x,t,y,z) |\lc \vr 2^{-l} |y'-z'| 
\lc  
\begin{cases} \vr   2^{\ka_1+\ka_2}\la^{-1}  2^{l\eps/d}&\text{ if \eqref{eq:assumpfirstcase} holds}
\\ \vr 2^{\ka_1+\ka_2+\ka_3}\la^{-1} 2^{l\eps/d} &\text{ if \eqref{eq:assumpsecondcase} holds}.
\end{cases} 
\]

Finally \[  |R_2(x,t,y,z)| \lc \vr \big( |y_1-z_1|+|Se_1| |\inn{y'-z'}{\fe_2} |)\] 
and we have $|y_1-z_1|\lc 2^{\ka_1}\la^{-1}2^{l\eps/d}$  and thus clearly 
\[|R_2(x,t,y,z)|\lc \vr \big(|y_1-z_1|+2^{-l}|y'-z'| \big) \lc \vr 2^{\ka_1+\ka_2} \la^{-1}  2^{l\eps/d}  \text{ if \eqref{eq:assumpfirstcase} holds.} \] 
Moreover we  get this  when \eqref{eq:assumpsecondcase} holds and $|Se_1|\le 2^{-l}$. 

Now consider the case that  \eqref{eq:assumpsecondcase} holds and $|Se_1|\ge 2^{-l}$. Then \[|Se_1| |\inn{y'-z'}{\fe_2}| \lc 2^{\ka_1+\ka_2+2} \la^{-1}2^{l\eps/d} \] and thus we also get 
\[|R_2(x,t,y,z)|\lc \vr 2^{\ka_1+\ka_2} \la^{-1}  2^{l\eps/d}  \text{ if \eqref{eq:assumpsecondcase} holds.} \] 

Altogether, for $y\in \fs$, $z\in \fs'$, 
    \Be
|\inn {\tfrac{y'-z'}{|y'-z'|}} {\partial_{x'}} \phi(x,t,y,z)|\gc  2^{\ka_1+\ka_2}\la^{-1} 2^{l\eps/d} \text{ if \eqref{eq:assumpfirstcase} holds,}
\Ee and 
\Be |\inn {\tfrac{y'-z'}{|y'-z'|}} {\partial_{x'}} \phi(x,t,y,z)|\gc   2^{\ka_1+\ka_2+\ka_3}\la^{-1}2^{l\eps/d} \text{ if \eqref{eq:assumpsecondcase} holds.}
\Ee

We need corresponding upper bounds for the higher derivatives 
$\inn {\tfrac{y'-z'}{|y'-z'|}} {\partial_{x'}}$. First observe that
\Be\label{eq:sigmader}  \inn {\tfrac {y'-z'}{|y'-z'|} }{\partial_{x'}} \sigma(x',y_1)=  \inn {\tfrac {y'-z'}{|y'-z'|} }{PSe_1}. \Ee
Clearly this is $O(2^{-l})$ when $\delta_l=2^{-l}$, in particular under assumption \eqref{eq:assumpfirstcase}.  On the other hand, if $\delta_l>2^{-l}$ then we use that 
$PSe_1= |Se_1|\fe_2$ and if we now assume
\eqref{eq:assumpsecondcase} we have 
$|\inn{y'-z'}{PSe_1} |\le \delta_l |\inn{y-z}{\fe_2}|\le 2^{\ka_1+\ka_2+2}\la^{-1} 2^{l\eps/d} $ and $|y'-z'| \ge |(y-z)^\perp|\ge 2^{\ka_1+\ka_2+\ka_3-1} \la^{-1} 2^l 2^{l\eps/d}$; hence $\inn {\tfrac {y'-z'}{|y'-z'|} }{PSe_1}=O(2^{-l})$ and therefore 
$\inn {\tfrac {y'-z'}{|y'-z'|} }{\partial_{x'}} \sigma(x',y_1)=  O(2^{-l}).$
 Moreover, for the higher derivatives we have  $\inn {\tfrac {y'-z'}{|y'-z'|} }{\partial_{x'} }^N\sigma=0$ for $N\ge 2$.  This implies,  for all $N$,  
\Be \notag\big|\inn {\tfrac {y'-z'}{|y'-z'|} }{\partial_{x'} }^N [\eta_l(x,t,y,z)] \big|\lc_N 1.\Ee 

Differentiating in \eqref{eq:lowerbd=dec} 
and using these  estimates for $\sigma$ and $ \inn {\tfrac {y'-z'}{|y'-z'|} }{\partial_{x'}} \sigma$, also yields 
\Be\notag \begin{aligned} \big|  \inn {\tfrac {y'-z'}{|y'-z'|} }{\partial_{x'} }^N \phi(x,t,y,z)| &\lc 2^{-l} |y'-z'|+|y_1-z_1|+|Se_1| \inn{y'-z'}{\fe_2} | \\&\lc  
\begin{cases}2^{\ka_1+\ka_2}  \la^{-1} 2^{l\eps/d} &\text{ if \eqref{eq:assumpfirstcase} holds}\\
    2^{\ka_1+\ka_2+\ka_3}  \la^{-1} 2^{l\eps/d} &\text{ if \eqref{eq:assumpsecondcase} holds}
\end{cases}
\end{aligned}
\Ee 
An integration by parts yields 
\Be
\label{eq:Kss'mod}
\sK_{\fs,\fs'}(y,z)= \bbone_{\fs}(y) \bbone_{\fs'}(z)   \int e^{i\la\phi(x,t,y,z)} \frac{\cL^N\eta_l(x,t,y,z)}{ (-i\la)^N}\,dx dt \Ee
where
\[\cL g(x,t,y,z)= \biginn{\frac{y'-z'}{|y'-z'|} }{\partial_{x'}} \Big( \frac{g}{
\tfrac{y'-z'}{|y'-z'|} {\partial_{x'} \phi}} \Big);\] and we have 
\[|\cL^N [\eta_l(x,t,y,z)]|\lc \begin{cases} 
(2^{\ka_1+\ka_2}   2^{l\eps/d})^{-N}\la^N &\text{ if \eqref{eq:assumpfirstcase} holds}\\
(2^{\ka_1+\ka_2+\ka_3}   2^{l\eps/d})^{-N}\la^N &\text{ if \eqref{eq:assumpsecondcase} holds.}
\end{cases} 
\]
By \eqref{eq:Kss'mod} this leads to  the pointwise estimates \eqref{eq:ptwboundfirstcase} (under assumption \eqref{eq:assumpfirstcase}) and 
\eqref{eq:ptwboundsecondcase} (under assumption \eqref{eq:assumpsecondcase}).
By applying Schur's test we obtain the claimed bounds in both  cases.
\qed

\section{Open problems and further directions}

\subsection{$d=2$} The problem of nontrivial $L^p$ bounds for the Nevo-Thangavelu maximal operator when $d=2$ remains currently open even in the model case of the Heisenberg group $\bbH^1$.

\subsection{A restricted weak type endpoint bound} Theorem \ref{thm:rwt} establishes a restricted weak type $(\frac{d}{d-1}, \frac{d}{d-1})$ endpoint estimate for the local maximal operator, when $d\ge 3$. 
Does this endpoint  bound also hold for the global  operator? 
This is  the case when all $J_i$ are zero (\cf. \cite{Bourgain-CompteR}).

\subsection{$L^p$-improving estimates} One can ask whether the local operator $f\mapsto \sup_{1\le t\le 2} |f*\mu_t|$ maps $L^p$ to $L^q$ for some $q>p$; this would imply corresponding  sparse bounds for the global maximal operator (see 
 \cite{BagchiHaitRoncalThangavelu}). As a model case for the case  $m=1$ the precise $q$-range for such results  should depend on the rank of $J_1$ (and no $L^p$ improving takes place when $J_1=0$). For  $m\ge 2$ the dependence on the matrices $J_1,\dots, J_m$ could be quite  complicated. The case of Heisenberg type groups is covered in \cite{RoosSeegerSrivastava-imrn}.

\subsection{Restricted dilation sets} One can also consider maximal functions with restricted dilation sets. The  $L^p\to L^p$ estimates with Minkowski dimension type assumptions are  rather straightforward; one can combine  the methods of this paper with elementary arguments  in \cite{SeegerWaingerWright, RoosSeegerSrivastava-Studia}. In contrast the $L^p$-improving estimates  are  harder; for the Heisenberg groups $\bbH^n$, with $n\ge 2$, this problem was considered  in \cite{RoosSeegerSrivastava-Studia}. For general dilation sets there is a   large variety of possible type sets  (\cf. \cite[Thm.1.2]{RoosSeeger}), and much remains  open.

\subsection{Higher step groups}  It would be interesting to develop versions of our theorem which apply in the  general setting of stratified groups; here only the case of lacunary dilations is well understood  (see  e.g. \cite{Aswinetal}).

\subsection{Averages over tilted measures}  
The above problems can also be formulated for the case where the spherical measure $\mu$ 
is no longer supported in a subspace invariant under the automorphic dilations.
 The assumption of invariance under automorphic dilations  is crucial for the analysis   in the present paper but  it has been relaxed in  
 \cite{AndersonCladekPramanikSeeger, RoosSeegerSrivastava-imrn} which cover results on maximal functions associated with such tilted measures on Heisenberg or Heisenberg type groups.

\bibliographystyle{amsplain}
\providecommand{\bysame}{\leavevmode\hbox to3em{\hrulefill}\thinspace}
\providecommand{\MR}{\relax\ifhmode\unskip\space\fi MR }


\end{document}